\newtheorem{theorem}{Theorem}[section]
\newtheorem{lemma}[theorem]{Lemma}
\newtheorem{proposition}[theorem]{Proposition}
\newtheorem{corollary}[theorem]{Corollary}
\newtheorem{conjecture}[theorem]{Conjecture}
\theoremstyle{definition}
\newtheorem{definition}[theorem]{Definition}
\newtheorem{example}[theorem]{Example}
\newtheorem{notation}[theorem]{Notation}
\theoremstyle{remark}
\newtheorem{remark}[theorem]{Remark}
\numberwithin{equation}{section}
\newcommand{\abs}[1]{\left|#1\right|}
\newcommand{\norm}[1]{\left\|#1\right\|}
\def\Xint#1{\mathchoice
{\XXint\displaystyle\textstyle{#1}}%
{\XXint\textstyle\scriptstyle{#1}}%
{\XXint\scriptstyle\scriptscriptstyle{#1}}%
{\XXint\scriptscriptstyle\scriptscriptstyle{#1}}%
\!\int}
\def\XXint#1#2#3{{\setbox0=\hbox{$#1{#2#3}{\int}$}
\vcenter{\hbox{$#2#3$}}\kern-.5\wd0}}
\def\dashint{\Xint-}
\begin{document}

\title[Weighted Bergman projection]{Weighted Bergman projection on the Hartogs triangle}

\author{Liwei Chen}
\address{Department of Mathematics, Washington University in St. Louis, St. Louis, Missouri 63130}
\email{chenlw@math.wustl.edu}

\subjclass[2010]{32A07, 32A25, 32A50, 42B20, 42B25.}

\date{\today}

\keywords{Hartogs triangle, Bergman projection, $L^p$ regularity, $A_p^+$-condition}

\begin{abstract}
We prove the $L^p$ regularity of the weighted Bergman projection on the Hartogs triangle, where the weights are powers of the distance to the singularity at the boundary. The restricted range of $p$ is proved to be sharp. By using a two-weight inequality on the upper half plane with Muckenhoupt weights, we can consider a slightly wider class of weights.
\end{abstract}

\maketitle

\section{Introduction}

\subsection{Setup}
Let $\Omega$ be a domain in $\mathbb{C}^n$.

\begin{definition}
\label{defweight}
A measurable function $\mu$ is a \textit{weight} on $\Omega$, if $\mu>0$ almost everywhere and is locally integrable on $\Omega$.
\end{definition}

For $p\ge1$, we consider the weighted $L^p$ space
\[
L^p(\Omega,\mu)=\{f\,\,\mbox{measurable on }\Omega\,:\,\norm{f}_{L^p(\Omega,\mu)}<\infty\},
\]
where $\norm{\cdot}_{L^p(\Omega,\mu)}$ is the weighted $L^p$ norm defined by
\[
\norm{f}_{L^p(\Omega,\mu)}=\left(\int_{\Omega}\abs{f(z)}^p\mu(z)\,dV(z)\right)^{\frac{1}{p}}.
\]

Let $\mathcal{O}(\Omega)$ be the set of holomorphic functions on $\Omega$. For $p=2$, it is easy to see that, if $\mu$ is continuous and non-vanishing on $\Omega$, then the analytic subspace $A^2(\Omega,\mu)=L^2(\Omega,\mu)\cap\mathcal{O}(\Omega)$ is closed in $L^2(\Omega,\mu)$.

\begin{definition}
For a continuous and non-vanishing weight $\mu$ on $\Omega$, we define the \textit{weighted Bergman projection} $\mathcal{B}_{\Omega,\mu}$ on $\Omega$ with the weight $\mu$ to be the orthogonal projection from $L^2(\Omega,\mu)$ to $A^2(\Omega,\mu)$. The weighted Bergman projection is an integral operator
\[
\mathcal{B}_{\Omega,\mu}(f)(z)=\int_{\Omega}B_{\Omega,\mu}(z,\zeta)f(\zeta)\mu(\zeta)\,dV(\zeta),
\]
where $B_{\Omega,\mu}(z,\zeta)$ is the \textit{weighted Bergman kernel} with $(z,\zeta)\in\Omega\times\Omega$.
\end{definition}

\subsection{Results}
In this paper, we study the $L^p$ regularity of the weighted Bergman projection on the Hartogs triangle
\[
\mathbb{H}=\{(z_1,z_2)\in\mathbb{C}^2\,:\,\abs{z_1}<\abs{z_2}<1\}
\]
with the weight
\begin{equation}
\label{weight}
\mu(z)=\abs{z_2}^{s'}\abs{g(z_2)}^2,
\end{equation}
where $z\in\mathbb{H}$, $s'\in\mathbb{R}$ and $g$ is a non-vanishing holomorphic function on the unit disk $\mathbb{D}$. Note that on $\mathbb{H}$, $\abs{z_2}$ is comparable to $\abs{z}$.

We first consider the weight $\mu$ with $g\equiv1$ in \eqref{weight}.

\begin{theorem}
\label{LpH}
For $s'\in\mathbb{R}$ with the unique expression $s'=s+2k$, where $k\in\mathbb{Z}$ and $s\in(0,2]$, let $\mathcal{B}_{\mathbb{H},s'}$ be the weighted Bergman projection on $\mathbb{H}$ with the weight $\mu(z)=\abs{z_2}^{s'}$, where $z\in\mathbb{H}$.
\begin{enumerate}
\item{For $s'\in(-2,\infty)$, $\mathcal{B}_{\mathbb{H},s'}$ is $L^p$ bounded if and only if $p\in\big(\frac{s+2k+4}{s+k+2},\frac{s+2k+4}{k+2}\big)$.}
\item{For $s'\in[-5,-2]$, $\mathcal{B}_{\mathbb{H},s'}$ is $L^p$ bounded for $p\in(1,\infty)$.}
\item{For $s'\in(-6,-5)$, then $k=-3$ and $s\in(0,1)$, $\mathcal{B}_{\mathbb{H},s'}$ is $L^p$ bounded if and only if $p\in\big(2-s,\frac{2-s}{1-s}\big)$.}
\item{When $s'=-6$, $\mathcal{B}_{\mathbb{H},s'}$ is $L^p$ bounded for $p\in(1,\infty)$.}
\item{For $s'\in(-\infty,-6)$, $\mathcal{B}_{\mathbb{H},s'}$ is $L^p$ bounded if and only if $p\in\big(\frac{s+2k+4}{k+2},\frac{s+2k+4}{s+k+2}\big)$.}
\end{enumerate}
\end{theorem}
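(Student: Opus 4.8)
The plan is to transfer the problem off the Hartogs triangle by means of the biholomorphism $\Phi:\mathbb{H}\to\mathbb{D}\times\mathbb{D}^*$, $\Phi(z_1,z_2)=(z_1/z_2,z_2)$, where $\mathbb{D}^*=\mathbb{D}\setminus\{0\}$, with inverse $(w_1,w_2)\mapsto(w_1w_2,w_2)$ and complex Jacobian $w_2$. Setting $U_\Phi f=(f\circ\Phi^{-1})\cdot w_2$, a change of variables (using $dV(z)=|w_2|^2\,dV(w)$) shows that $U_\Phi$ is an isometry from $L^p(\mathbb{H},|z_2|^{s'})$ onto $L^p(\mathbb{D}\times\mathbb{D}^*,|w_2|^{s'+2-p})$ and, for $p=2$, a unitary carrying $A^2(\mathbb{H},|z_2|^{s'})$ onto $A^2(\mathbb{D}\times\mathbb{D}^*,|w_2|^{s'})$; hence $\mathcal{B}_{\mathbb{H},s'}=U_\Phi^{-1}\,\mathcal{B}_{\mathbb{D}\times\mathbb{D}^*,s'}\,U_\Phi$. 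Thus $L^p$-boundedness of $\mathcal{B}_{\mathbb{H},s'}$ on $L^p(\mathbb{H},|z_2|^{s'})$ is equivalent to boundedness of the fixed, $L^2$-defined projection $\mathcal{B}_{\mathbb{D}\times\mathbb{D}^*,s'}$ on the space $L^p(\mathbb{D}\times\mathbb{D}^*,|w_2|^{s'+2-p})$, whose weight exponent $s'+2-p$ now depends on $p$. This mismatch between the $L^2$-defining weight $|w_2|^{s'}$ and the $L^p$-testing weight $|w_2|^{s'+2-p}$ is precisely the two-weight phenomenon announced in the abstract.

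Because the weight depends only on $w_2$, the kernel factors and $\mathcal{B}_{\mathbb{D}\times\mathbb{D}^*,s'}=\mathcal{B}_{\mathbb{D}}\otimes\mathcal{P}_{s'}$, where $\mathcal{B}_{\mathbb{D}}$ is the ordinary Bergman projection in $w_1$ and $\mathcal{P}_{s'}$ is the orthogonal projection in $L^2(\mathbb{D}^*,|w_2|^{s'})$ onto the closed span of the Laurent monomials $w_2^m$ of finite norm. Since $\int_{\mathbb{D}}|w_2|^{2m+s'}\,dV<\infty$ iff $2m+s'>-2$, the unique decomposition $s'=s+2k$ with $s\in(0,2]$ forces the surviving frequencies to be exactly $m\ge m_0:=-k-1$. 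By Fubini and the unweighted $L^p$-boundedness of $\mathcal{B}_{\mathbb{D}}$ for all $p\in(1,\infty)$, the $w_1$-factor imposes no constraint, so the entire range of $p$ is dictated by $\mathcal{P}_{s'}$ acting on $L^p(\mathbb{D},|w_2|^{s'+2-p})$, with the value of $k$ (equivalently the cutoff $m_0$) producing the different regimes of the statement. Computing the reproducing kernel of $\mathcal{P}_{s'}$ as $\frac1\pi\sum_{m\ge m_0}(m+\tfrac{s'}{2}+1)(z\bar w)^m$ exhibits a boundary singularity of the form $(1-z\bar w)^{-2}$ together with a factor $(z\bar w)^{m_0}$ that is singular at the center $w_2=0$ exactly when $m_0<0$; this center corresponds to the singular boundary $\{z_2=0\}$ of $\mathbb{H}$.

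Next I would decompose $\mathcal{P}_{s'}$ along the angular Fourier modes and pass, through a conformal/logarithmic change of the radial variable, to an integral operator on the upper half-plane $\mathbb{U}$ whose kernel is of Bergman (Cauchy–Szeg\H{o}) type and whose one-sided character reflects the frequency restriction $m\ge m_0$. In these coordinates the testing weight $|w_2|^{s'+2-p}$ becomes a power weight, and the desired estimate becomes a two-weight inequality on $\mathbb{U}$ governed by a Muckenhoupt $A_p^+$-condition for the pair of exponents $(s',\,s'+2-p)$ together with the cutoff $m_0$. Boundedness holds precisely when this $A_p^+$-condition is satisfied; translating it back through $s'=s+2k$ and using the conjugacy relation $1/p_-+1/p_+=1$ yields the conjugate intervals in parts (1) and (5), while the transitional weights in parts (2)--(4) are exactly those for which the $A_p^+$-condition degenerates so as to hold for every $p\in(1,\infty)$. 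Sharpness (the ``only if'' direction) is then obtained by testing $\mathcal{P}_{s'}$ on explicit functions concentrating at the singularity, such as powers $|w_2|^{a}$ or truncated monomials $w_2^m$, and checking that the operator norm diverges as $p$ approaches either endpoint.

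The principal difficulty is establishing the two-weight inequality on $\mathbb{U}$ with sharp dependence on $p$: one must prove both the sufficiency of the $A_p^+$-condition, via a Schur-test or Bekoll\'e--Bonami-type argument adapted to the one-sided, frequency-restricted kernel, and its necessity, and then confirm that the resulting thresholds match the stated endpoints in every regime. The bookkeeping across the five cases is delicate precisely because the discrete jumps of the cutoff $m_0=-k-1$ interact with the continuous parameter $s\in(0,2]$; the transitional ranges $s'\in[-5,-2]$, $s'\in(-6,-5)$, and $s'=-6$ each require separate verification that the half-plane inequality either holds for all $p$ or restricts to the asserted subinterval, rather than following mechanically from the generic formula. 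I expect the two-weight estimate, and the matching of its endpoints to the arithmetic of the splitting $s'=s+2k$, to be the crux of the proof.
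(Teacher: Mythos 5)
Your first two steps---transferring to $\mathbb{D}\times\mathbb{D}^*$ via $\Phi(z)=(z_1/z_2,z_2)$, discarding the $w_1$-factor, and reducing to a one-variable projection on $\mathbb{D}^*$ with frequency cutoff $m_0=-k-1$---are sound and essentially reproduce the paper's inflation principle (the paper normalizes with $f\mapsto f\circ\Phi^{-1}$ instead, which turns the problem into a \emph{single}-weight problem for $\mathcal{B}_{\mathbb{D}^*,|w|^{s'+2}}$ on $L^p(\mathbb{D}^*,|w|^{s'+2})$; your Jacobian-twisted isometry gives the equivalent two-weight formulation that the paper only uses later, in Section \ref{sec6}). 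The gap is at your crux: you assert that the resulting half-plane estimate ``holds precisely when this $A_p^+$-condition is satisfied.'' For a genuinely two-weight pair that assertion is exactly the paper's open Conjecture \ref{conjecture1}; what is actually proved (Proposition \ref{mu1>mu2}) is the equivalence only under the extra hypothesis $c\mu_1\ge\mu_2$. The paper gets around this by splitting the kernel via the homotopy identity $B_{s'}=\frac{s}{2}(z\overline{\zeta})^{-(k+1)}B_0+\left(1-\frac{s}{2}\right)(z\overline{\zeta})^{-k}B_0$ into two operators $\mathcal{T}_1,\mathcal{T}_2$: after the Cayley transform the weight pair for $\mathcal{T}_1$ satisfies $\mu_1\ge\mu_2$ (so the two-weight iff applies and produces the sharp interval $U_1$), whereas for $\mathcal{T}_2$ the ordering is reversed and only a one-weight sufficiency argument is available, yielding boundedness on a strictly larger interval $U_2\supsetneq U_1$---which suffices, since $\mathcal{B}_{s'}=\frac{s}{2}\mathcal{T}_1+(1-\frac{s}{2})\mathcal{T}_2$ then pins the sharp range to $U_1$. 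Your outline contains no substitute for this decomposition, so as written it rests on an unproved two-weight theorem. (The paper's primary proof sidesteps $A_p^+$ altogether: Schur's test with $h(z)=(1-|z|^2)^{\delta}|z|^{\sigma}$ applied to the dominating kernel $|z\overline{\zeta}|^{-(k+1)}|1-z\overline{\zeta}|^{-2}$, which is more elementary and handles all five regimes uniformly.)

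On sharpness there are two further problems. First, pure radial powers $|w_2|^{a}$ project onto the wrong Fourier mode; the extremal behaviour lives at the frequency $m_0=-k-1$, so the test functions must carry the angular factor $\left(\overline{w}/|w|\right)^{k+1}$. Second, showing that ``the operator norm diverges as $p$ approaches either endpoint'' does not prove unboundedness \emph{at} the endpoint, which is what the open-interval statement requires (unboundedness strictly outside the closed interval then follows by duality and interpolation with $p=2$, so the endpoint is the only real issue). This can be repaired either by the limiting family $f_a=|w|^{-a}(\overline{w}/|w|)^{k+1}$ at the fixed endpoint $p$ with $a$ increasing to the critical value $s+k+1$, or, as the paper does, by a single sequence supported on the annuli $|w|\in(a_{j+1},a_j]$ with $a_j=j^{-j}$ (Lemma \ref{unbounded}), arranged so that $\norm{f_n}_{L^p}$ stays bounded while $\norm{\mathcal{B}_{s'}(f_n)}_{L^p}\approx\sum_{j\le n}j^{-1}\to\infty$. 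Either route must be carried out explicitly; it does not follow from generic endpoint heuristics.
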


In particular, if we allow the weight $\mu$ on $\mathbb{H}$ to be unbounded, then we can shrink the range of $p$ arbitrarily.

\begin{corollary}
\label{LpHs<0}
Given any $p_0\in[1,2)$ with its conjugate exponent $p_0'$, let $\mu(z)=\abs{z_2}^{-(p_0+4)}$, where $z\in\mathbb{H}$. The weighted Bergman projection on $\mathbb{H}$ with the weight $\mu$ is $L^p(\mathbb{H},\mu)$ bounded if and only if $p\in(p_0,p_0')$.
\end{corollary}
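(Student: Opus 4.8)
The plan is to deduce the corollary directly from Theorem \ref{LpH} by specializing the exponent. Setting $s'=-(p_0+4)$, the hypothesis $p_0\in[1,2)$ forces $p_0+4\in[5,6)$, hence $s'\in(-6,-5]$. Thus only cases (2) and (3) of Theorem \ref{LpH} are relevant, and the whole argument reduces to matching the parametrization $s'=s+2k$ against the conjugate-exponent relation $\frac{1}{p_0}+\frac{1}{p_0'}=1$.

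First I would dispose of the endpoint $p_0=1$. Here $s'=-5$, which lies in $[-5,-2]$, so Theorem \ref{LpH}(2) gives boundedness on $L^p(\mathbb{H},\mu)$ for every $p\in(1,\infty)$. Since $p_0=1$ has conjugate exponent $p_0'=\infty$, the interval $(p_0,p_0')$ is precisely $(1,\infty)$, and the claim holds in this case.

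For $p_0\in(1,2)$ we have $s'\in(-6,-5)$, so Theorem \ref{LpH}(3) applies and I must produce the unique decomposition $s'=s+2k$ with $k\in\mathbb{Z}$, $s\in(0,2]$. Taking $k=-3$ gives $s=s'+6=2-p_0$; since $p_0\in(1,2)$ this yields $s\in(0,1)$, matching the hypotheses of case (3). It then remains to substitute into the range $\big(2-s,\frac{2-s}{1-s}\big)$ furnished by the theorem. A short computation gives $2-s=p_0$ and $\frac{2-s}{1-s}=\frac{p_0}{p_0-1}=p_0'$, so the range of boundedness is exactly $(p_0,p_0')$, as asserted.

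The argument is entirely computational once Theorem \ref{LpH} is in hand, so I expect no genuine obstacle; the only point demanding care is the bookkeeping at the left endpoint $p_0=1$, where the relevant set $(-6,-5]$ meets case (2) rather than case (3), and where $p_0'=\infty$ must be interpreted correctly. I would also note, to justify the remark preceding the statement, that since $s'<0$ the weight $\mu=\abs{z_2}^{s'}$ blows up as $\abs{z_2}\to0$ along the singular part of the boundary, and that letting $p_0\uparrow2$ shrinks $(p_0,p_0')$ to the single point $\{2\}$, so the range of $L^p$ boundedness can be made arbitrarily narrow.
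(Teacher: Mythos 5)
Your proposal is correct and is essentially the paper's own proof, which simply cites Theorem \ref{LpH} parts (2) and (3) with $s'=-(p_0+4)$; you have merely written out the bookkeeping ($k=-3$, $s=2-p_0$, $2-s=p_0$, $\frac{2-s}{1-s}=p_0'$) that the paper leaves implicit. No issues.
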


\begin{remark}
A similar result holds for the $n$-dimensional generalized Hartogs triangle. See section \ref{sec3} for details.
\end{remark}

To consider a wider class of weights of the form in \eqref{weight}, inspired by the ideas in \cite{LS1,Z}, we use a different method and prove the following result.

\begin{theorem}
\label{LpHwider}
Assume that $p>1$. Let $\mu$ be of the form in \eqref{weight}. Suppose that the weighted Bergman projection $\mathcal{B}_{\mathbb{D},\abs{g}^2}$ on $\mathbb{D}$ with the weight $\abs{g}^2$ is $L^p\big(\mathbb{D},|g|^2\big)$ bounded if and only if $p\in(p_0,p_0')$ for some $p_0\ge1$, and suppose that the weighted Bergman projection $\mathcal{B}_{\mathbb{H},s'}$ on $\mathbb{H}$ with the weight $\lambda(z)=\abs{z_2}^{s'}$, where $z\in\mathbb{H}$, is $L^p(\mathbb{H},\lambda)$ bounded if and only if $p\in(p_1,p_1')$ for some $p_1\ge1$ as in Theorem \ref{LpH}. Then the weighted Bergman projection $\mathcal{B}_{\mathbb{H},\mu}$ on $\mathbb{H}$ with the weight $\mu$ is $L^p(\mathbb{H},\mu)$ bounded if $p\in(p_0,p_0')\cap(p_1,p_1')$.

In addition, if $(p_1,p_1')\subset(p_0,p_0')$ properly, then $\mathcal{B}_{\mathbb{H},\mu}$ is $L^p(\mathbb{H},\mu)$ bounded if and only if $p\in(p_1,p_1')$.
\end{theorem}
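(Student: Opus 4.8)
The plan is to strip the Hartogs triangle down to a single one--variable operator and then feed that operator into a Muckenhoupt--type two-weight inequality. First I would straighten $\mathbb{H}$ with the biholomorphism $\Phi(z_1,z_2)=(z_1/z_2,z_2)$, which maps $\mathbb{H}$ onto the product $\mathbb{D}\times\mathbb{D}^{*}$ (with $\mathbb{D}^{*}$ the punctured disk) and has Jacobian determinant $z_2^{-1}$. Because the weighted Bergman projection carries the weight inside its defining integral, the change of variables absorbs the Jacobian factor $|w_2|^{2}$ into a shift $s'\mapsto s'+2$ of the exponent, and this absorption is \emph{independent of $p$}; hence the composition operator $\tilde F\mapsto \tilde F\circ\Phi$ is an isometry $L^{p}(\mathbb{D}\times\mathbb{D}^{*},\tilde\mu)\to L^{p}(\mathbb{H},\mu)$ for every $p$, where $\tilde\mu(w)=|w_2|^{s'+2}|g(w_2)|^{2}$. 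Consequently $\mathcal{B}_{\mathbb{H},\mu}$ is $L^{p}(\mathbb{H},\mu)$ bounded if and only if the transferred projection on $\mathbb{D}\times\mathbb{D}^{*}$ is $L^{p}(\tilde\mu)$ bounded. Since $\tilde\mu$ does not depend on $w_1$, this projection is the tensor product of the unweighted disk Bergman projection in $w_1$, which is $L^{p}(\mathbb{D})$ bounded for all $p\in(1,\infty)$, with the weighted projection $\mathcal{B}_{\mathbb{D}^{*},|w|^{s'+2}|g|^{2}}$ in $w_2$; testing on split functions $\phi(w_1)\psi(w_2)$ shows this tensor reduction is an equivalence, so the entire question collapses to the $L^{p}$ boundedness of the single one--variable operator $\mathcal{B}_{\mathbb{D}^{*},|w|^{s'+2}|g|^{2}}$.

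Next I would peel off the factor $|g|^{2}$ with the multiplication trick: since $g$ is holomorphic and non-vanishing on $\mathbb{D}$, the operator $M_{g}$ is a unitary from $A^{2}(\,\cdot\,,\nu|g|^{2})$ onto $A^{2}(\,\cdot\,,\nu)$, giving $\mathcal{B}_{\Omega,\nu|g|^{2}}=M_{1/g}\,\mathcal{B}_{\Omega,\nu}\,M_{g}$. A direct norm computation with $F=gf$ then shows that $\mathcal{B}_{\mathbb{D}^{*},|w|^{s'+2}|g|^{2}}$ is bounded on $L^{p}(\mathbb{D}^{*},|w|^{s'+2}|g|^{2})$ if and only if $\mathcal{B}_{\mathbb{D}^{*},|w|^{s'+2}}$ is bounded on $L^{p}(\mathbb{D}^{*},|w|^{s'+2}|g|^{2-p})$. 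The two hypotheses translate into the same language: applying the trick on $\mathbb{D}$, the assumption on $\mathcal{B}_{\mathbb{D},|g|^{2}}$ is equivalent to the statement that the extra weight $|g|^{2-p}$ satisfies the Bekoll\'e--Bonami condition $B_{p}(\mathbb{D})$ exactly for $p\in(p_{0},p_{0}')$, while the assumption on $\mathcal{B}_{\mathbb{H},s'}$ (transferred as above) says that $\mathcal{B}_{\mathbb{D}^{*},|w|^{s'+2}}$ with \emph{unit} extra weight is bounded exactly for $p\in(p_{1},p_{1}')$. Thus sufficiency reduces to one two-weight estimate for $\mathcal{B}_{\mathbb{D}^{*},|w|^{s'+2}}$ with base weight $|w|^{s'+2}$ and extra weight $|g|^{2-p}$, knowing both the admissible exponent window and the Muckenhoupt character of the extra weight.

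The heart of the matter, and the step I expect to be the main obstacle, is this two-weight estimate. Decomposing over the angular Fourier modes (the base weight being radial) turns $\mathcal{B}_{\mathbb{D}^{*},|w|^{s'+2}}$ into a family of one-dimensional integral operators in the radial variable $r=|w|$; after the substitution $r=e^{-t}$ these become convolution operators on a half-line with one-sided, exponentially decaying kernels---Hardy-type averaging operators---which I would transfer to the upper half plane. There the two-weight $L^{p}$ boundedness is governed by a (one-sided) Muckenhoupt $A_{p}^{+}$ condition, and the content of the proof is that the $B_{p}(\mathbb{D})$ membership of $|g|^{2-p}$ supplies precisely the Muckenhoupt condition required for the extra weight, \emph{uniformly across the modes}, while $p\in(p_{1},p_{1}')$ guarantees the kernels are of the admissible, integrable type. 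Matching $B_{p}(\mathbb{D})$ with the half-plane Muckenhoupt condition and extracting a mode-uniform constant is the delicate part; once it is in place the two-weight inequality yields boundedness exactly when $p\in(p_{0},p_{0}')\cap(p_{1},p_{1}')$, which is the first assertion.

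For the sharpness statement, assume $(p_{1},p_{1}')\subset(p_{0},p_{0}')$ properly. The ``if'' direction is immediate since the intersection equals $(p_{1},p_{1}')$. Because every reduction above is an equivalence, the ``only if'' direction reduces to showing the one-variable operator $\mathcal{B}_{\mathbb{D}^{*},|w|^{s'+2}}$ is unbounded on $L^{p}(\mathbb{D}^{*},|w|^{s'+2}|g|^{2-p})$ for every $p\notin(p_{1},p_{1}')$, and I would split this into two regimes located at different boundary pieces. If $p\in(p_{0},p_{0}')\setminus(p_{1},p_{1}')$, the obstruction sits at the puncture $w=0$ (the Hartogs singularity), where $|g(w)|^{2-p}$ is comparable to the nonzero constant $|g(0)|^{2-p}$; hence the monomial test functions concentrating near $w=0$ that witness the failure of $\mathcal{B}_{\mathbb{H},s'}$ still witness unboundedness here. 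If instead $p\notin(p_{0},p_{0}')$, then $|g|^{2-p}\notin B_{p}(\mathbb{D})$, and since the failure of the Muckenhoupt condition is located near $\partial\mathbb{D}$, where the base weight $|w|^{s'+2}$ is bounded above and below, this failure is unaffected by the base weight and produces unboundedness there. Combining the two regimes gives unboundedness for every $p\notin(p_{1},p_{1}')$, which is the claimed sharp range.
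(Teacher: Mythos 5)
Your overall architecture (reduce $\mathbb{H}$ to $\mathbb{D}^*$ by the biholomorphism $\Phi(z)=(z_1/z_2,z_2)$ plus the tensor-product argument, strip off $|g|^2$ so that it reappears as an extra weight $|g|^{2-p}$, and settle the resulting inequality by a Muckenhoupt-type condition on the upper half plane) matches the paper's route through Proposition \ref{inflation_theorem} and Proposition \ref{LpD*wider}. But the step you yourself identify as ``the heart of the matter'' has a genuine gap, and the route you propose for it would fail. Decomposing $\mathcal{B}_{\mathbb{D}^*,|w|^{s'+2}}$ over angular Fourier modes does diagonalize the kernel, but for $p\neq 2$ the $L^p$ norm does not split over Fourier modes, and with the non-radial extra weight $|g|^{2-p}$ mode-uniform bounds on the one-dimensional radial operators cannot be reassembled into a bound for the full operator; there is no unconditionality of the angular decomposition in $L^p(\mu)$. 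The paper avoids an infinite decomposition entirely: the explicit ``homotopy'' identity \eqref{kernel} writes $B_{s'}$ as a sum of only \emph{two} terms, each a monomial $(z\overline{\zeta})^{-j}$ times the ordinary Bergman kernel $B_0$, so that $\mathcal{B}_{s'}=\frac{s}{2}\mathcal{T}_1+(1-\frac{s}{2})\mathcal{T}_2$ and each $\mathcal{T}_j$ transfers under the Cayley transform to the half-plane Bergman projection with explicit power weights. That two-term identity is the missing ingredient in your argument.

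A second, related gap: you treat the resulting two-weight inequality as if it were characterized outright by an $A_p^+$ condition. In this paper that equivalence (Proposition \ref{mu1>mu2}) is only proved under the extra hypothesis $c\mu_1\ge\mu_2$; without it, sufficiency of $A_p^+$ is precisely the open Conjecture \ref{conjecture1}. The paper's proof of Proposition \ref{LpD*wider} has to check which of the two weight pairs produced by $\mathcal{T}_1$ and $\mathcal{T}_2$ satisfies the ordering (the first pair does, since $s\le 2$; the second does not, and is handled by applying the one-weight statement to a single weight and using monotonicity, which is also what makes the sharpness argument work: at $p=p_1,p_1'$ the first pair fails while the second remains bounded). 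Your proposal never verifies any ordering between the weights, so as written it rests on an unproved implication. The endpoint/unboundedness discussion in your last paragraph is plausible in spirit and close to what the paper does, but it cannot be completed until the sufficiency mechanism above is repaired.
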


\begin{example}
As in \cite{Z}, if we take $g(z)=(z-1)^{\alpha}$ for some $\alpha>0$, then we see $(p_0,p_0')=\big(\frac{2\alpha+2}{\alpha+2},\frac{2\alpha+2}{\alpha}\big)$. By Theorem \ref{LpH}, when $s'\in[0,\infty)$, we have $(p_1,p_1')=\big(\frac{s+2k+4}{s+k+2},\frac{s+2k+4}{k+2}\big)$. So $\mathcal{B}_{\mathbb{H},\mu}$ is bounded if $p\in\big(\frac{2\alpha+2}{\alpha+2},\frac{2\alpha+2}{\alpha}\big)\bigcap\big(\frac{s+2k+4}{s+k+2},\frac{s+2k+4}{k+2}\big)$.
\end{example}

By using the new method, we can also study the $L^p$ regularity of the weighted Bergman projection on $\mathbb{H}$ mapping from one weighted space $L^p\big(\mathbb{H},\abs{z_2}^{s'}\big)$ to the other $L^p\big(\mathbb{H},\abs{z_2}^{t}\big)$, where $z\in\mathbb{H}$, $t\in\mathbb{R}$, and $s'\in\mathbb{R}$ with the unique expression $s'=s+2k$ for $k\in\mathbb{Z}$ and $s\in(0,2]$. For simplicity, we focus on the case $k\ge-1$.

\begin{theorem}
\label{LpHst}
For $s'\in\mathbb{R}$ with the unique expression $s'=s+2k$, where $k\in\mathbb{Z}$ and $s\in(0,2]$, let $\mathcal{B}_{\mathbb{H},s'}$ be the weighted Bergman projection on $\mathbb{H}$ with the weight $\abs{z_2}^{s'}$, where $z\in\mathbb{H}$. Assume that $p>1$, $k\ge-1$, and $t\in\mathbb{R}$. Then $\mathcal{B}_{\mathbb{H},s'}$ is $L^p$ bounded from $L^p\big(\mathbb{H},\abs{z_2}^{s'}\big)$ to $L^p\big(\mathbb{H},\abs{z_2}^{t}\big)$ if $p\in\big(\frac{s+2k+4}{s+k+2},\frac{t+4}{k+2}\big)$.

In addition, if $t-s'\le(2-s)p$, then $\mathcal{B}_{\mathbb{H},s'}$ is $L^p$ bounded from $L^p\big(\mathbb{H},\abs{z_2}^{s'}\big)$ to $L^p\big(\mathbb{H},\abs{z_2}^{t}\big)$ if and only if $p\in\big(\frac{s+2k+4}{s+k+2},\frac{t+4}{k+2}\big)$.

On the other hand, if $p\le\frac{s+2k+4}{s+k+2}$, then $\mathcal{B}_{\mathbb{H},s'}$ is unbounded for all $t\in\mathbb{R}$.
\end{theorem}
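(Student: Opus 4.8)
The plan is to reduce the two-weight mapping problem $L^p(\mathbb{H},\abs{z_2}^{s'})\to L^p(\mathbb{H},\abs{z_2}^{t})$ on the Hartogs triangle to a corresponding one-dimensional weighted estimate on the unit disk $\mathbb{D}$, following the same philosophy that underlies Theorem \ref{LpHwider}. The key structural fact is that $\mathbb{H}$ is biholomorphic to $\mathbb{D}\times\mathbb{D}^*$ (where $\mathbb{D}^*$ is the punctured disk) via $(z_1,z_2)\mapsto(z_1/z_2,z_2)$, and that the weight $\abs{z_2}^{s'}$ depends only on the second coordinate. Under this transformation the weighted Bergman kernel on $\mathbb{H}$ factors, and the operator $\mathcal{B}_{\mathbb{H},s'}$ becomes a tensor-type operator whose $L^p$ behavior in the $z_1$-variable is elementary while the delicate behavior is entirely carried by the $z_2$-variable. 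This is presumably already set up in the proof of Theorem \ref{LpH}, so I would reuse that reduction verbatim, the only new feature being that the target weight exponent $t$ differs from the source exponent $s'$.

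First I would write out the transformed operator and collect the $\abs{z_2}^{s'}$ and $\abs{z_2}^{t}$ weights on the source and target sides respectively, reducing the estimate to a one-variable two-weight inequality of Forelli--Rudin type: an integral operator on $\mathbb{D}$ (or equivalently on the upper half plane after a Cayley transform) with kernel a power of $(1-\overline{\zeta_2}z_2)$, mapping $L^p$ with one power weight to $L^p$ with another. Second, I would invoke the two-weight inequality with Muckenhoupt $A_p$ weights on the upper half plane that the abstract advertises, translating the pair $(s',t)$ into an $A_p^+$-type condition. The condition $p\in\big(\frac{s+2k+4}{s+k+2},\frac{t+4}{k+2}\big)$ should emerge exactly as the requirement that both the source weight integrates against the kernel near the diagonal (giving the lower endpoint $\frac{s+2k+4}{s+k+2}$, which only involves $s'$) and the target weight controls the kernel's decay (giving the upper endpoint $\frac{t+4}{k+2}$, which brings in $t$). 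The restriction $k\ge-1$ guarantees the relevant exponents stay in the regime where the power $\abs{z_2}^{s'}$ is locally integrable and the kernel expansion is valid without extra correction terms.

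For the sharpness direction under the hypothesis $t-s'\le(2-s)p$, I would construct explicit test functions: monomials $z_1^a z_2^b$ (or limits thereof) chosen so that the source norm $\norm{z_1^a z_2^b}_{L^p(\mathbb{H},\abs{z_2}^{s'})}$ is finite while the image under $\mathcal{B}_{\mathbb{H},s'}$ has infinite target norm $\norm{\cdot}_{L^p(\mathbb{H},\abs{z_2}^{t})}$ precisely when $p$ hits or exceeds an endpoint. The lower endpoint failure is tested by functions concentrating near the singular boundary point, and the upper endpoint by functions detecting the kernel's growth; the inequality $t-s'\le(2-s)p$ is exactly what ensures the upper-endpoint test function has finite source norm, so that the necessity argument closes. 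For the final claim that $p\le\frac{s+2k+4}{s+k+2}$ forces unboundedness \emph{for every} $t$, the point is that the lower endpoint depends only on the source weight $s'$: the test function witnessing failure at the lower endpoint lives entirely on the source side and its image fails to be locally integrable (hence cannot lie in any $L^p(\mathbb{H},\abs{z_2}^{t})$, regardless of $t$), so no choice of target weight can rescue boundedness.

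I expect the main obstacle to be the sharpness analysis at the upper endpoint, where the interplay between $t$ and $p$ is genuinely two-dimensional. The condition $t-s'\le(2-s)p$ is an if-and-only-if threshold rather than a strict inequality, which signals that at the boundary $t-s'=(2-s)p$ the extremal test function sits exactly at the edge of $L^p(\mathbb{H},\abs{z_2}^{s'})$; verifying that it is genuinely in the source space (and not merely borderline divergent) while its projection leaves the target space requires a careful logarithmic or limiting argument rather than a crude power count. Once $t-s'>(2-s)p$, the natural test functions no longer lie in the source space, which is exactly why sharpness can fail and the theorem only asserts the sufficient direction in that regime; making this dichotomy precise is where the bookkeeping concentrates.
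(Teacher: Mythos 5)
Your overall reduction is the same as the paper's: pass to $\mathbb{D}\times\mathbb{D}^*$ via $(z_1,z_2)\mapsto(z_1/z_2,z_2)$, discard the $z_1$-factor using the $L^p$-boundedness of the disk projection, split $\mathcal{B}_{s'}=\frac{s}{2}\mathcal{T}_1+(1-\frac{s}{2})\mathcal{T}_2$, Cayley-transform to $\mathbb{R}_+^2$, and test an $A_p^+$ condition on the resulting weight pairs. However, there is a genuine gap at the central step. The two-weight inequality you plan to ``invoke'' is only proved in the paper (Proposition \ref{mu1>mu2}) under the domination hypothesis $c\mu_1\ge\mu_2$; without that hypothesis the sufficiency of the $A_p^+$ condition is precisely the paper's open Conjecture \ref{conjecture1}. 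For the first pair of weights arising from $\mathcal{T}_1$, domination holds exactly when $t-s'\le(2-s)p$ --- which is why that inequality appears as the hypothesis of the ``if and only if'' clause. In the complementary regime $t-s'>(2-s)p$ your plan stalls: the paper gets around this by applying the \emph{one-weight} version to $\mu_1$ and to $\mu_2$ separately, obtaining boundedness on a union of two intervals such as $\big(\frac{s+2k+4}{s+k+2},\frac{s+2k+4}{s+k}\big)\cup\big(\frac{t+4}{k+4},\frac{t+4}{k+2}\big)$, and then filling the middle by interpolation. That union-plus-interpolation device is a necessary extra idea your proposal does not contain.

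Two further points. First, when $t<s'$ and the interval $\big(\frac{s+2k+4}{s+k+2},\frac{t+4}{k+2}\big)$ is empty, the paper's unboundedness argument is not a test-function computation at all: it shows $A^p\big(\mathbb{H},\abs{z_2}^{t}\big)\subsetneq A^p\big(\mathbb{H},\abs{z_2}^{s'}\big)$ and uses that $\mathcal{B}_{\mathbb{H},s'}$ is the identity on the source analytic subspace, so it cannot map into the strictly smaller target. Your sketch does not cover this case. Second, your mechanism for the final claim ($p\le\frac{s+2k+4}{s+k+2}$ forces unboundedness for every $t$) is misstated: the image of the extremal sequence $\{f_n\}$ is $s\,z^{-(k+1)}A_{n,1}$, a fixed function (which \emph{does} lie in $L^p(\mathbb{D}^*,\abs{z}^t)$ for $t$ large) multiplied by the divergent constant $A_{n,1}$ from Lemma \ref{unbounded}; the failure is that the target norms blow up while the source norms of $f_n$ stay uniformly bounded, not that the image fails to be locally integrable. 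The conclusion you want is correct, but the reason you give would not survive scrutiny for large $t$.
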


\subsection{Background}
The $L^p$ regularity of the (ordinary) Bergman projection is of considerable interest for many years. For domains with smooth boundary, we refer the readers to \cite{F,PS,NRSW,MS,CD} for the principal results. For the non-smooth case, see \cite{LS1,KP1,KP2,Z}.

In particular, several people recently are interested in the regularity of the Bergman projection on the Hartogs triangle. In \cite{CS1} and \cite{CS2}, Chakrabarti and Shaw focus on the $\overline{\partial}$-equation and the corresponding Sobolev regularity on the product domains and the Hartogs triangle. In \cite{C}, the author shows the (ordinary) Bergman projection is $L^p$ bounded on the Hartogs triangle if and only if $p\in\big(\frac{4}{3},4\big)$. In \cite{CZ}, Chakrabarti and Zeytuncu study the $L^p$ mapping property of the (ordinary) Bergman projection on the Hartogs triangle.

In contrast to the previous work, we consider the $L^p$ regularity of the weighted Bergman projection on the Hartogs triangle, where the weights are powers of the distance to the singularity at the boundary. The results may also provide an idea to determine the type of serious boundary singularity.

\subsection{Outline and Ideas}
By applying inflation principle\footnote{See section \ref{sec2} for details.}, working on the Hartogs triangle $\mathbb{H}$ is indeed equivalent to working on the punctured disk $\mathbb{D}^*=\mathbb{D}\setminus\{0\}$. Note that the weighted Bergman kernel ${B}_{s'}(z,\zeta)$ associated to $\mathbb{D}^*$ with the weight $\mu(z)=\abs{z}^{s'}$ can be expressed as a ``homotopy" between two weighted Bergman kernels
\[
B_{s'}(z,\zeta)=\frac{s}{2}B_{2k+2}(z,\zeta)+\left(1-\frac{s}{2}\right)B_{2k}(z,\zeta),
\]
where $(z,\zeta)\in\mathbb{D}^*\times\mathbb{D}^*$, $s'=s+2k$, $k\in\mathbb{Z}$, and $s\in(0,2]$.

In the first part of this article, we apply Schur's test to deduce the $L^p$ regularity of the weighted Bergman projection for $p$ being inside some open interval depending on the weight. In order to show the estimate is sharp, we then construct a sequence of $L^p$ functions whose image under the weighted Bergman projection blows up in the $L^p$ norm when $p$ equals to an endpoint of the interval.

We point out that in Theorem \ref{LpH} the range of $p$ does not change continuously as $s'$ varies. In fact, there are jumps around the even integers. The reason is that the analytic subspace $A^2\big(\mathbb{H},\abs{z_2}^{s'}\big)$ remains fixed as long as $s'$ does not go past the even integers.

In the second part, to consider a slightly wider class of weights, we apply the Cayley transform $\varphi:\mathbb{R}_+^2\to\mathbb{D}$ via $\varphi(z)=\frac{i-z}{i+z}$. Then we need to consider different types of the following two-weight inequality on the upper half plane
\begin{equation}
\label{2weight}
\int_{\mathbb{R}_+^2}\abs{\int_{\mathbb{R}_+^2}-\frac{f(w)}{(z-\overline{w})^2}\,dA(w)}^p\mu_1(z)\,dA(z)\le C\int_{\mathbb{R}_+^2}\abs{f(z)}^p\mu_2(z)\,dA(z),
\end{equation}
where $\mu_1$ and $\mu_2$ are two weights on $\mathbb{R}_+^2$.

With slight modifications of the proof of \cite[Proposition 4.5]{LS1}, we have the following proposition which is sufficient for our application.\footnote{See Definition \ref{Apclass} for the precise definition of $A_p^+(\mathbb{R}_+^2)$ weights.}

\begin{proposition}
\label{mu1>mu2}
For $p>1$, suppose that $\mu_1$ and $\mu_2$ are two weights on $\mathbb{R}_+^2$ such that $c\mu_1\ge\mu_2$ for some $c>0$. Then \eqref{2weight} holds for some $C>0$ if and only if $(\mu_1,\mu_2)\in A_p^+(\mathbb{R}_{+}^2)$.
\end{proposition}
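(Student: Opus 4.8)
The plan is to follow the proof of \cite[Proposition 4.5]{LS1} and adapt it to the present two-weight setting, in which only the one-sided comparison $c\mu_1\ge\mu_2$ is available. Throughout I identify $\mathbb{R}_+^2$ with the upper half plane $\{z:\operatorname{Im}z>0\}$ and write $T$ for the integral operator appearing in \eqref{2weight}, whose kernel is $-(z-\overline{w})^{-2}$. Since $\abs{-(z-\overline{w})^{-2}}=\abs{z-\overline{w}}^{-2}$, the modulus $\abs{Tf}$ is dominated pointwise by the \emph{positive} operator $T^{+}f(z)=\int_{\mathbb{R}_+^2}\abs{f(w)}\abs{z-\overline{w}}^{-2}\,dA(w)$. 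Thus it suffices to characterize the $L^p(\mu_2)\to L^p(\mu_1)$ boundedness of $T^{+}$, and this reduction loses nothing: the test functions used in the necessity argument are nonnegative, so the lower bounds obtained for $T^{+}$ also hold for (the real part of) $T$.

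For the necessity direction I would test \eqref{2weight} against the natural family of functions adapted to $A_p^+(\mathbb{R}_+^2)$. For a boundary interval $Q\subset\mathbb{R}$ let $\hat{Q}$ be the Carleson box over $Q$ and $\sigma=\mu_2^{-1/(p-1)}$ the dual weight, and take $f=\sigma\mathbf{1}_{\hat{Q}}$. Restricting the left-hand side of \eqref{2weight} to the top Whitney box $T(Q)$, on which $\abs{z-\overline{w}}\approx\ell(Q)$ uniformly for $w\in\hat{Q}$, gives $T^{+}f(z)\gtrsim\ell(Q)^{-2}\int_{\hat{Q}}\sigma\,dA$. Feeding this lower bound into \eqref{2weight} and comparing with the right-hand side $\int_{\hat{Q}}\sigma\,dA$ produces, after simplification, exactly the averaged quantity defining $A_p^+(\mathbb{R}_+^2)$ associated to $Q$; taking the supremum over all $Q$ yields $(\mu_1,\mu_2)\in A_p^+(\mathbb{R}_{+}^2)$. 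This half is the routine direction and follows the classical Bekoll\'e--Bonami testing scheme.

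For the sufficiency direction I would pass to a dyadic model. A Whitney decomposition of the upper half plane into boxes on which $\operatorname{Im}z$ is comparable to the distance to $\mathbb{R}$, combined with the fact that $\abs{z-\overline{w}}^{-2}$ is essentially constant on each pair of Whitney boxes, lets me dominate $T^{+}$ by a positive dyadic Bergman-type operator $\sum_{\hat{Q}}\langle f\rangle_{\hat{Q}}\mathbf{1}_{\hat{Q}}$, summed over dyadic Carleson boxes, where $\langle f\rangle_{\hat{Q}}=\abs{\hat{Q}}^{-1}\int_{\hat{Q}}\abs{f}\,dA$. I would then establish the $L^p(\mu_2)\to L^p(\mu_1)$ bound for this dyadic operator. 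In general its two-weight boundedness is governed by a pair of Sawyer-type testing (Carleson-embedding) conditions, but under the standing hypothesis $c\mu_1\ge\mu_2$ the testing condition for $\mu_1$ is controlled by the corresponding quantity for $\mu_2$, so both testing conditions collapse to the single scalar condition $(\mu_1,\mu_2)\in A_p^+(\mathbb{R}_+^2)$. A Schur-type weight built from $\sigma$ and the $A_p^+$ constant then lets me run the estimate and sum the dyadic series with a constant $C$ depending only on $p$, $c$, and $[\mu_1,\mu_2]_{A_p^+}$.

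The main obstacle I anticipate is precisely this sufficiency half, at the point where the general two-weight theory for positive dyadic operators demands full testing conditions rather than the $A_p^+$ condition alone. The hypothesis $c\mu_1\ge\mu_2$ is exactly what bridges this gap, and verifying that it legitimately reduces the testing conditions to $A_p^+$—and that the resulting Schur weights yield constants that are uniform across all Whitney scales—is the crux of the argument. Making this reduction precise, while tracking how the slight modifications of \cite[Proposition 4.5]{LS1} interact with the one-sided comparison, is where I expect the real work to lie; the remaining estimates are routine.
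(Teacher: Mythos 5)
Your overall architecture (testing functions for necessity, positive-operator domination for sufficiency) is reasonable, but the sufficiency half has a genuine gap exactly at the point you yourself flag as ``where the real work lies'': you assert that under $c\mu_1\ge\mu_2$ the two Sawyer-type testing conditions for the positive dyadic model operator ``collapse'' to the single condition $(\mu_1,\mu_2)\in A_p^+(\mathbb{R}_+^2)$, but you give no argument, and this implication is not routine. For positive dyadic operators the scalar $A_p$-type condition is in general strictly weaker than the testing conditions, and the one-sided comparison $c\mu_1\ge\mu_2$ only lets you pass from averages of $\mu_2^{-p'/p}$ to averages of $\mu_1^{-p'/p}$; it does not by itself produce the required Carleson-embedding estimates. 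The paper never invokes two-weight testing theory at all. Its mechanism is: the averaging operator $E$ over the standard tiling together with the pointwise bound $\widetilde{\mathcal{B}}(f)\le cE\widetilde{\mathcal{B}}E(f)$; the hypothesis $c\mu_1\ge\mu_2$ is used once, to upgrade $(\mu_1,\mu_2)\in A_p^+$ to the one-weight statement $\mu_1\in A_p^+$ (since $\mu_1^{-p'/p}\le c^{p'/p}\mu_2^{-p'/p}$), whence $E(\mu_1)\in A_p(\mathbb{R}_+^2)$ and the classical one-weight theory for the singular integral $\widetilde{\mathcal{B}}$ gives $\int\big(\widetilde{\mathcal{B}}E(f)\big)^pE(\mu_1)\le c\int\big(E(f)\big)^pE(\mu_1)$; finally the tile-wise H\"older inequality $E(f)\le\big(E(f^p\mu_2)\big)^{1/p}\big(E(\mu_2^{-p'/p})\big)^{1/p'}$ together with the two-weight $A_p^+$ condition (each tile sits inside a comparable special disk) converts the right-hand side into $c\int f^p\mu_2$. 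If you want to keep your dyadic route you would essentially have to reprove this chain inside the testing framework; the $E$-based argument is the concrete step your outline is missing.

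Two smaller problems in the necessity half. First, before testing with $f=\sigma\mathbf{1}_{\hat Q}$, $\sigma=\mu_2^{-1/(p-1)}$, you must know $\int_{\hat Q}\sigma\,dA<\infty$, since otherwise $f\notin L^p(\mu_2)$ and the test is vacuous; the paper establishes this first by a duality argument (if the integral were infinite one could choose $g\in L^p$ with $\int g\mu_2^{-1/p}\,dA=\infty$ and conclude $\mu_1(D\cap\mathbb{R}_+^2)=0$, a contradiction). Second, your claim that the lower bound for $T^{+}$ transfers to ``(the real part of) $T$'' on the top Whitney box is false as stated: writing $a=\mathrm{Re}(z-\overline{w})$ and $b=\mathrm{Im}(z-\overline{w})$, one has $\mathrm{Re}\big({-}(z-\overline{w})^{-2}\big)=(b^2-a^2)/\abs{z-\overline{w}}^4$, which changes sign as $w$ ranges over $\hat Q$ for a generic $z$ in the top half of $\hat Q$, so cancellation can destroy the lower bound. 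One must evaluate at points where $b\ge\lambda\abs{a}$ for some fixed $\lambda>1$ (or at a horizontally offset box where $a\ge\lambda b$) and then recover the diagonal $A_p^+$ condition from the resulting offset inequality; this is precisely the part of the argument the paper imports from Lanzani and Stein rather than treating as automatic.
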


Here we mention an open problem related to the two-weight inequality \eqref{2weight}. If one compares Theorem \ref{mu1>mu2} with \cite[Proposition 4.5]{LS1}, then one may suspect that the condition ``$c\mu_1\ge\mu_2$ for some $c>0$" could be redundant in the sufficient direction.

\begin{conjecture}
\label{conjecture1}
For $p>1$, if the two weights $\mu_1$ and $\mu_2$ satisfy $(\mu_1,\mu_2)\in A_p^+(\mathbb{R}_{+}^2)$, then \eqref{2weight} holds for some $C>0$.
\end{conjecture}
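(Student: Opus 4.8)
The plan is to reduce \eqref{2weight} to a two-weight bound for a positive operator and then to exploit the one-sided nature of the $A_p^+(\mathbb{R}_+^2)$ condition, which is what distinguishes it from the classical two-sided Muckenhoupt condition and makes the conjecture plausible in the first place. Since
\[
\abs{\int_{\mathbb{R}_+^2}-\frac{f(w)}{(z-\overline{w})^2}\,dA(w)}\le\int_{\mathbb{R}_+^2}\frac{\abs{f(w)}}{\abs{z-\overline{w}}^2}\,dA(w)=:T_+\abs{f}(z),
\]
it suffices to establish \eqref{2weight} with the singular kernel replaced by the positive operator $T_+$. I would then pass to the dual weight $\sigma=\mu_2^{1-p'}$ and reformulate the target as a bound for $T_+(\sigma\,\cdot\,)$ from $L^p(\sigma)$ to $L^p(\mu_1)$, placing the problem squarely within the Sawyer-type framework for positive integral operators.

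Next I would perform a Whitney/tent decomposition of $\mathbb{R}_+^2$ adapted to the boundary $\mathbb{R}=\partial\mathbb{R}_+^2$, so that on each tent the kernel $\abs{z-\overline{w}}^{-2}$ is essentially constant and $T_+$ is dominated by a sum of positive averaging operators over the tents. This is precisely the model in which $A_p^+$ is the natural sufficient condition: Sawyer's characterization of the two-weight inequality for the one-sided Hardy--Littlewood maximal function shows that, for genuinely one-sided operators, the joint $A_p^+$ condition alone suffices, with no separate testing condition. The goal is then to organize the tents into a corona/stopping-time structure and to control both the local (near-diagonal) and the long-range contributions using only $(\mu_1,\mu_2)\in A_p^+(\mathbb{R}_+^2)$.

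The main obstacle is the long-range part of $T_+$. In Proposition \ref{mu1>mu2}, the hypothesis $c\mu_1\ge\mu_2$ is exactly what lets one absorb the interactions between far-separated tents, effectively collapsing the two-weight estimate to a one-weight-type bound; without it, the off-diagonal terms must be summed using only the joint condition, and for a genuine singular or positive kernel this is precisely where a Sawyer testing condition is ordinarily unavoidable. The crux is therefore whether the upper-region, one-sided structure encoded in $A_p^+$ is strong enough to replace testing for the Bergman kernel $\abs{z-\overline{w}}^{-2}$. I would attempt a careful stopping-time argument on the tents, summing the long-range contributions geometrically against the $A_p^+$ constant, and I expect this step to be delicate. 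The most likely outcomes are either that such an argument succeeds---confirming that $A_p^+$ is genuinely sufficient, as it is for one-sided maximal operators---or that a counterexample built from weights concentrated on widely separated tents shows that the domination hypothesis in Proposition \ref{mu1>mu2} cannot be dropped.
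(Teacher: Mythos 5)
There is a genuine gap, and in fact the statement you are addressing is not proved in the paper at all: it is stated explicitly as an open problem (Conjecture \ref{conjecture1}), and the paper only establishes the partial result Proposition \ref{mu1>mu2}, in which the extra hypothesis $c\mu_1\ge\mu_2$ is essential. Your proposal correctly locates the difficulty but does not overcome it. The reduction to the positive operator $\widetilde{\mathcal{B}}$ and the decomposition into boundary-adapted squares are exactly the paper's own first steps (the standard tiling $S_{j,k}$, the averaging operator $E$, and the pointwise bound $\widetilde{\mathcal{B}}(f)\le cE\widetilde{\mathcal{B}}E(f)$). But in the paper's argument the chain of inequalities in \eqref{eq4.9} hinges on knowing that the single weight $E(\mu_1)$ lies in $A_p(\mathbb{R}_+^2)$, and this is deduced from the joint condition \emph{together with} $c\mu_1\ge\mu_2$, which forces $\mu_1\in A_p^+(\mathbb{R}_+^2)$ on its own. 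Once the domination hypothesis is dropped, the joint condition only controls the product $E(\mu_1)\bigl(E(\mu_2^{-p'/p})\bigr)^{p/p'}$ and gives no single-weight information, so the weighted singular-integral bound for $\widetilde{\mathcal{B}}E$ against $E(\mu_1)$ is no longer available. Your proposal acknowledges this ("the off-diagonal terms must be summed using only the joint condition") and then defers the resolution: you say you "would attempt" a stopping-time argument, "expect this step to be delicate," and list a counterexample as an equally likely outcome. That is a statement of the open problem, not a proof of it.

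A secondary concern is that the analogy with Sawyer's theorem for the one-sided Hardy--Littlewood maximal function is weaker than your writeup suggests. The one-sided maximal operator on $\mathbb{R}$ has a genuine ordering structure (averages only to one side of the point) that makes the joint $A_p^+$ condition self-improving; the kernel $|z-\overline{w}|^{-2}$ on $\mathbb{R}_+^2$ has no such directionality --- the "one-sidedness" of $A_p^+(\mathbb{R}_+^2)$ here refers only to testing on special disks centered on $\partial\mathbb{R}_+^2$, which is a different phenomenon. For positive integral operators of this type the two-weight problem is generally governed by Sawyer-type testing conditions, and nothing in your outline shows that these follow from $(\mu_1,\mu_2)\in A_p^+(\mathbb{R}_+^2)$ alone. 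If you want a genuine contribution here, the concrete targets are: either verify the testing conditions from the joint condition for this specific kernel, or construct the counterexample with weights concentrated on widely separated tiles; the paper itself offers only the heuristic evidence from the $\mathcal{T}_2$ analysis in the alternative proof of Proposition \ref{LpD*}.
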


By considering the result in \cite{N}, we are also interested in the following variant.

\begin{conjecture}
For $p>1$, if the two weights $\mu_1$ and $\mu_2$ satisfy $(\mu_1^r,\mu_2^r)\in A_p^+(\mathbb{R}_{+}^2)$ for some $r>1$, then \eqref{2weight} holds for some $C>0$.
\end{conjecture}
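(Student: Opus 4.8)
The plan is to deduce the conjecture from Proposition \ref{mu1>mu2} by inserting an intermediate weight, in the spirit of Neugebauer's insertion theorem \cite{N}. Write $Tf(z)=\int_{\mathbb{R}_+^2}-\frac{f(w)}{(z-\overline{w})^2}\,dA(w)$ for the operator in \eqref{2weight}, and let $[\nu_1,\nu_2]_{A_p^+}$ denote the pair constant of Definition \ref{Apclass}. The only obstruction to applying Proposition \ref{mu1>mu2} directly is the comparability hypothesis $c\mu_1\ge\mu_2$, which the conjecture drops. The strategy is to produce a third weight $\nu$ that is comparable to the smaller of the two given weights, precisely $\nu\le C\mu_1$ and $\nu\le C\mu_2$, and for which the \emph{comparable} pair $(\mu_1,\nu)$ still satisfies $(\mu_1,\nu)\in A_p^+$. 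Granting such a $\nu$, Proposition \ref{mu1>mu2} applies to $(\mu_1,\nu)$ and yields $\norm{Tf}_{L^p(\mathbb{R}_+^2,\mu_1)}\le C\norm{f}_{L^p(\mathbb{R}_+^2,\nu)}$; since $\nu\le C\mu_2$ we have $\norm{f}_{L^p(\mathbb{R}_+^2,\nu)}\le C\norm{f}_{L^p(\mathbb{R}_+^2,\mu_2)}$, and composing the two estimates gives exactly \eqref{2weight}.

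The heart of the matter is the construction of $\nu$, and this is where the power bump $(\mu_1^r,\mu_2^r)\in A_p^+$ enters. Starting from the pointwise majorant $\min(\mu_1,\mu_2)$ one cannot in general expect $\big(\mu_1,\min(\mu_1,\mu_2)\big)\in A_p^+$, because the backward dual average of $\nu^{-1/(p-1)}$ blows up where $\nu$ is small. The remedy is a one-sided Neugebauer construction based on the forward and backward Hardy--Littlewood maximal operators $M^{\pm}$ associated with the averaging regions of \cite{LS1}: one regularizes $\min(\mu_1,\mu_2)$ into a one-sided $A_\infty^+$ weight $\nu$, and the gain of integrability encoded in $r>1$---through a one-sided reverse H\"older inequality for $A_p^+$ weights---is exactly what keeps $\nu$ below $C\min(\mu_1,\mu_2)$ while forcing the pair constant $[\mu_1,\nu]_{A_p^+}$ to stay finite. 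Verifying that the resulting $\nu$ simultaneously satisfies the two pointwise bounds and the pair $A_p^+$ bound is a quantitative Neugebauer-type computation, run with the directional averaging regions of Definition \ref{Apclass} in place of cubes.

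An alternative, which I would pursue in parallel, bypasses the insertion and instead revisits the proof of Proposition \ref{mu1>mu2} (itself a modification of \cite[Proposition 4.5]{LS1}) to locate the single step at which $c\mu_1\ge\mu_2$ is used; in the bump regime that H\"older/duality step can be re-run with the conjugate exponents $r$ and $r'$, turning the comparability into the finiteness of the bumped average $\big(\text{avg}\,\mu_1^r\big)^{1/r}\big(\text{avg}\,\mu_2^{-r/(p-1)}\big)^{(p-1)/r}$. This is the one-sided analogue of the separated/double-bump theory for Calder\'on--Zygmund operators, and since $-1/(z-\overline{w})^2$ is a Calder\'on--Zygmund-type kernel off the diagonal, one expects a pointwise sparse domination $\abs{Tf}\lesssim$ (one-sided positive dyadic operator) that reduces the whole problem to a one-sided double-bump estimate for positive sparse operators.

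The main obstacle in either route is the \emph{one-sidedness}. The classical Neugebauer construction and the double-bump machinery both rest on tools---two-sided reverse H\"older, $A_\infty$ self-improvement, the Calder\'on--Zygmund decomposition, and the John--Nirenberg inequality---that are genuinely more delicate in the one-sided $A_p^+$ world of \cite{LS1}, where averaging is only permitted in the forward direction dictated by the cone geometry of $\mathbb{R}_+^2$. In particular, establishing a one-sided reverse H\"older inequality with constants compatible with the directional averaging regions of Definition \ref{Apclass}, strong enough to absorb the $r>1$ gain into a pair $A_p^+$ bound, is the step I expect to be hardest; it is presumably the reason the statement is recorded here as a conjecture rather than a theorem.
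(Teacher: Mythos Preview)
This statement is recorded in the paper as an open conjecture; there is no proof in the paper to compare against, and your write-up is, as you yourself note at the end, a strategy rather than a proof.

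There is a terminological confusion running through both of your routes. The class $A_p^+(\mathbb{R}_+^2)$ of Definition~\ref{Apclass} is not Sawyer's one-sided $A_p^+$ on the line; the ``$+$'' refers to the upper half-plane, and the ``special disks'' are simply half-disks centered on $\mathbb{R}$, with no forward/backward directionality. There is no role for directional maximal operators $M^\pm$, and the phrases ``forward direction dictated by the cone geometry'' and ``one-sided reverse H\"older'' do not match the actual setting. What distinguishes $A_p^+$ from $A_p$ here is that the averaging regions are all centered on the boundary, not that they point in one direction.

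With that corrected, Route~1 meets a structural obstacle worth naming. In the two-sided $A_p$ world the pair condition already forces $\mu_1\le C\mu_2$ a.e.\ (shrink cubes to a point and use Lebesgue differentiation), which is why Neugebauer can insert $w$ \emph{between} $\mu_1$ and $\mu_2$. Here the special disks cannot be shrunk to an interior point of $\mathbb{R}_+^2$, so $(\mu_1,\mu_2)\in A_p^+$ forces no pointwise comparison; indeed Proposition~\ref{Ap+widerAp} exhibits an $A_p^+$ pair with $\mu_1/\mu_2$ unbounded. Your proposed $\nu$ below both weights (``majorant'' should read ``minorant'') is therefore genuinely not Neugebauer's construction, and pushing $\nu$ downward makes the dual average in $[\mu_1,\nu]_{A_p^+}$ larger, so it is not clear the bump $r>1$ buys enough room. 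Route~2 is closer to the paper's own machinery and is where I would focus: in the proof of Proposition~\ref{mu1>mu2} the hypothesis $c\mu_1\ge\mu_2$ is invoked exactly once, to pass from $(\mu_1,\mu_2)\in A_p^+$ to $\mu_1\in A_p^+$ so that $E(\mu_1)\in A_p$ and the Calder\'on--Zygmund step in \eqref{eq4.9} goes through; the pair condition itself is used only at \eqref{eq4.11}. A direct attack should target that single step---for instance, show that the bumped pair hypothesis forces $(E(\mu_1),E(\mu_2))$ into a two-weight class on which a bound for $\widetilde{\mathcal{B}}$ is already available---rather than import a one-sided maximal-function theory that is not the one in play here.
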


\subsection*{Organization}
In section \ref{sec2}, we introduce the inflation principle in a general setting. In section \ref{sec3}, we prove a result for $\mathbb{D}^*$ (Proposition \ref{LpD*}) by applying Schur's test,  and then deduce Theorem \ref{LpH}. In section \ref{sec4}, we mainly focus on \eqref{2weight} and prove Proposition \ref{mu1>mu2}. In section \ref{sec5}, by applying Proposition \ref{mu1>mu2}, we prove another result for $\mathbb{D}^*$ (Proposition \ref{LpD*wider}) and hence deduce Theorem \ref{LpHwider}. As a  last application of Proposition \ref{mu1>mu2}, we prove Theorem \ref{LpHst} and study the mapping property of the weighted Bergman projection (Corollary \ref{sharpestimate}) in section \ref{sec6}.

\subsection*{Acknowledgements}
The content of this paper is a part of the author's Ph.D. thesis at Washington University in St. Louis (see \cite{Chen}). The author would like to thank his thesis advisor S.G. Krantz for very helpful comments and suggestions on his research. The author also wants to thank the referee for helpful recommendations to improve the presentation of the paper. Without all these input, this paper will not appear.

\section{The Inflation Principle}
\label{sec2}

\subsection{Preliminaries}
Let us temporarily consider the general setting for a moment, and suppose $\Omega$ is a domain in $\mathbb{C}^n$.
\begin{definition}
\label{inflation}
Let $m$ be an integer and let $z\in\mathbb{C}^m$. If $\mu$ is a non-vanishing weight on $\Omega$, we define the \textit{inflation} $\widetilde{\Omega}$ of $\Omega$ via $\mu$ by
\[
\widetilde{\Omega}=\{(z,w)\in\mathbb{C}^{m+n}\,:\,\abs{z}^2<\mu(w),w\in\Omega\}.
\]
Note that $\widetilde{\Omega}$ is a Hartogs domain.
\end{definition}

The verifications of the following two lemmas are straightforward.

\begin{lemma}
\label{induce_projection}
Let $F:X_1\to X_2$ be an isometry between two Banach spaces $X_1$ and $X_2$. Then it induces an isometry $F^*:\mathfrak{B}(X_1)\to\mathfrak{B}(X_2)$ between the spaces of the bounded operators by $F^*(T)=F\circ T\circ F^{-1}$ for any $T\in\mathfrak{B}(X_1)$.

In particular, suppose that $X_j=H_j$ is a Hilbert space, $j=1,2$. Let $S$ be a closed subspace of $H_1$, and let $P:H_1\to S$ be the orthogonal projection. Then $F$ induces an orthogonal decomposition $H_2=F(S)\oplus F(S^{\perp})$, that is, $F(S)$ is closed in $H_2$ and $F(S)^{\perp}=F(S^{\perp})$. Hence, $F^*(P):H_2\to F(S)$ is the orthogonal projection.
\end{lemma}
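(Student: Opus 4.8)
The plan is to treat the two assertions in turn, first the Banach-space statement and then its Hilbert specialization, throughout reading ``isometry'' as a \emph{surjective} linear isometry so that $F^{-1}$ exists and is itself an isometric bijection; this is precisely what makes $F^*(T)=F\circ T\circ F^{-1}$ a well-defined operator on $X_2$. Note first that $F^{-1}$ does preserve norms: writing $y=Fx$ gives $\norm{F^{-1}y}=\norm{x}=\norm{Fx}=\norm{y}$, so $F^{-1}$ carries the unit sphere of $X_2$ bijectively onto that of $X_1$.

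For the first part I would check that $F^*(T)$ lands in $\mathfrak{B}(X_2)$: as a composition $X_2\xrightarrow{F^{-1}}X_1\xrightarrow{T}X_1\xrightarrow{F}X_2$ of bounded maps it is bounded, and $F^*$ is visibly linear. The heart of the matter is the norm identity. Starting from $\norm{F^*(T)}_{\mathfrak{B}(X_2)}=\sup_{\norm{y}_{X_2}=1}\norm{FTF^{-1}y}_{X_2}$ and using that $F$ preserves norms, the outer $F$ drops out, leaving $\norm{TF^{-1}y}_{X_1}$; substituting $x=F^{-1}y$ and using that $F^{-1}$ maps the unit sphere of $X_2$ onto that of $X_1$, the supremum becomes $\sup_{\norm{x}_{X_1}=1}\norm{Tx}_{X_1}=\norm{T}_{\mathfrak{B}(X_1)}$. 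Thus $F^*$ is a linear isometry, and since $R\mapsto F^{-1}\circ R\circ F$ is a two-sided inverse it is in fact an isometric isomorphism.

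For the Hilbert-space part, the one genuinely new ingredient is that a surjective linear isometry between Hilbert spaces preserves the inner product, $\langle Fx,Fy\rangle=\langle x,y\rangle$, which follows from the polarization identity applied to $\norm{F\cdot}=\norm{\cdot}$. Granting this, I would argue as follows. First, $F(S)$ is closed because $F$ is a homeomorphism and $S$ is closed. Next, $F(S)^{\perp}=F(S^{\perp})$: using surjectivity to write an arbitrary vector as $Fv$ and then inner-product preservation, the relation $Fv\perp F(S)$ holds exactly when $\langle v,s\rangle=0$ for all $s\in S$, i.e.\ exactly when $v\in S^{\perp}$. Applying $F$ to $H_1=S\oplus S^{\perp}$ then gives $H_2=F(S)+F(S^{\perp})$ with the summands orthogonal, which is the asserted orthogonal decomposition.

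Finally I would identify $F^*(P)=FPF^{-1}$ as the orthogonal projection onto $F(S)$. It is idempotent since conjugation preserves $P^2=P$; its range is $F(S)$ and its kernel is $F(S^{\perp})=F(S)^{\perp}$; and an idempotent whose kernel is the orthogonal complement of its range is precisely the orthogonal projection onto that range. Equivalently, a surjective linear isometry of Hilbert spaces is unitary, so $F^*=F^{-1}$ and $FPF^{-1}$ is manifestly self-adjoint and idempotent. I expect no serious obstacle here: the only step requiring genuine care is the passage from norm preservation to inner-product preservation in the Hilbert setting, with everything else being formal bookkeeping.
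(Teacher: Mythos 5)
Your proof is correct and is exactly the standard verification the paper has in mind: the paper itself offers no argument, stating only that ``the verifications of the following two lemmas are straightforward,'' and your write-up supplies those details (norm-preservation of $F^{-1}$, the supremum substitution for $\norm{F^*(T)}=\norm{T}$, polarization to upgrade norm-preservation to inner-product preservation, and the identification of $FPF^{-1}$ as the idempotent with range $F(S)$ and kernel $F(S)^{\perp}$). The only cosmetic point is that in your final remark the symbol $F^*$ is used both for the induced map on operators and for the Hilbert-space adjoint of $F$; writing the adjoint differently would avoid the clash, but this does not affect the argument.
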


\begin{lemma}
\label{product_operator}
Suppose we have a weight $\mu_1>0$ on $\Omega_1$ and a weight $\mu_2>0$ on $\Omega_2$, both non-vanishing. Let $\mathcal{T}_1$ and $\mathcal{T}_2$ be the integral operators with kernels $T_1(w_1,\eta_1)$ on $\Omega_1\times\Omega_1$ and $T_2(w_2,\eta_2)$ on $\Omega_2\times\Omega_2$, respectively. That is,
\[
\begin{array}{c}
\mathcal{T}_1(f)(w_1)=\int_{\Omega_1}T_1(w_1,\eta_1)f(\eta_1)\mu_1(\eta_1)\,dV(\eta_1),\\
\mathcal{T}_2(g)(w_2)=\int_{\Omega_2}T_2(w_2,\eta_2)g(\eta_2)\mu_2(\eta_2)\,dV(\eta_2).
\end{array}
\]
Given any $p\in[1,\infty)$, if $\mathcal{T}_1$ is bounded on $L^p(\Omega_1,\mu_1)$ and $\mathcal{T}_2$ is bounded on $L^p(\Omega_2,\mu_2)$, then their product operator $\mathcal{T}=\mathcal{T}_1\otimes\mathcal{T}_2$ with kernel $T_1\otimes T_2$, is bounded on $L^p(\Omega_1\times\Omega_2,\mu_1\otimes\mu_2)$.

Conversely, assuming $\mathcal{T}_1$ and $\mathcal{T}_2$ both are non-trivial, if one of these two operator is unbounded, then $\mathcal{T}$ is unbounded.
\end{lemma}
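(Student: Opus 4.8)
The plan is to handle the two directions separately, in both cases exploiting that the product kernel factors as $T_1(w_1,\eta_1)T_2(w_2,\eta_2)$ and that the product measure $\mu_1\otimes\mu_2$ is a tensor product, so that everything reduces to the one-variable operators via Fubini--Tonelli. For the forward direction, fix $F\in L^p(\Omega_1\times\Omega_2,\mu_1\otimes\mu_2)$. By Tonelli's theorem the slice $F(\eta_1,\cdot)$ lies in $L^p(\Omega_2,\mu_2)$ for $\mu_1$-a.e.\ $\eta_1$, so I can set $G(\eta_1,w_2)=\mathcal{T}_2\big(F(\eta_1,\cdot)\big)(w_2)$ for those $\eta_1$. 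Grouping the product-kernel integral as an iterated integral (legitimate by the absolute convergence discussed below) gives $\mathcal{T}(F)(w_1,w_2)=\mathcal{T}_1\big(G(\cdot,w_2)\big)(w_1)$.

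The estimate is then two applications of the one-variable bounds with a swap of order in between. First, for fixed $w_2$, the $L^p(\Omega_1,\mu_1)$-boundedness of $\mathcal{T}_1$ yields
\[
\int_{\Omega_1}\abs{\mathcal{T}_1\big(G(\cdot,w_2)\big)(w_1)}^p\mu_1(w_1)\,dV(w_1)\le\norm{\mathcal{T}_1}^p\int_{\Omega_1}\abs{G(\eta_1,w_2)}^p\mu_1(\eta_1)\,dV(\eta_1).
\]
Integrating this against $\mu_2(w_2)\,dV(w_2)$, interchanging the order of integration by Tonelli, and applying the $L^p(\Omega_2,\mu_2)$-boundedness of $\mathcal{T}_2$ to each slice $F(\eta_1,\cdot)$ gives
\[
\norm{\mathcal{T}(F)}_{L^p(\Omega_1\times\Omega_2,\mu_1\otimes\mu_2)}\le\norm{\mathcal{T}_1}\,\norm{\mathcal{T}_2}\,\norm{F}_{L^p(\Omega_1\times\Omega_2,\mu_1\otimes\mu_2)},
\]
which is the desired boundedness with an explicit operator-norm bound.

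For the converse, assume $\mathcal{T}_1$ is unbounded (the case of $\mathcal{T}_2$ being symmetric). Unboundedness supplies a sequence $f_n\in L^p(\Omega_1,\mu_1)$ with $\norm{f_n}_{L^p(\Omega_1,\mu_1)}=1$ and $\norm{\mathcal{T}_1 f_n}_{L^p(\Omega_1,\mu_1)}\to\infty$, while non-triviality of $\mathcal{T}_2$ provides a single $g_0\in L^p(\Omega_2,\mu_2)$ with $\norm{g_0}_{L^p(\Omega_2,\mu_2)}=1$ and $c_0:=\norm{\mathcal{T}_2 g_0}_{L^p(\Omega_2,\mu_2)}>0$. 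Testing against the tensor functions $F_n=f_n\otimes g_0$, the factorization of the kernel gives $\mathcal{T}(F_n)=(\mathcal{T}_1 f_n)\otimes(\mathcal{T}_2 g_0)$, and since $\norm{a\otimes b}=\norm{a}\,\norm{b}$ for the product measure, I obtain $\norm{F_n}=1$ yet $\norm{\mathcal{T}(F_n)}=c_0\,\norm{\mathcal{T}_1 f_n}\to\infty$, so $\mathcal{T}$ is unbounded.

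The only genuine care needed, and where I expect the main obstacle, is the measure-theoretic bookkeeping rather than any analytic difficulty: verifying that the product-kernel integral defining $\mathcal{T}(F)$ converges absolutely for a.e.\ $(w_1,w_2)$ and therefore agrees with the iterated integral, checking that $G$ is jointly measurable with slices in the correct $L^p$ spaces, and fixing the precise reading of ``non-trivial'' and ``unbounded'' so that the test sequence in the converse is legitimate (in particular that $\mathcal{T}_1 f_n$ and $\mathcal{T}_2 g_0$ are genuine $L^p$ functions). Each of these follows from Fubini--Tonelli combined with the a priori $L^p$ bounds, so once the order of integration is organized as above the argument is routine.
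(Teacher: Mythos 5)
The paper offers no proof of this lemma (it is one of the two lemmas whose ``verifications\dots are straightforward''), and your Fubini--Tonelli iteration for the boundedness together with testing on tensor functions $f_n\otimes g_0$ for the converse is exactly the standard verification being alluded to. Your argument is correct, and you rightly identify the only real work as the measure-theoretic bookkeeping (joint measurability of the intermediate function $G$ and absolute convergence of the kernel integral), which is harmless in the paper's applications.
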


\begin{corollary}
\label{transform_kernel}
Let $\Phi:\Omega_1\to\Omega_2$ be a biholomorphism between two domains in $\mathbb{C}^n$. Suppose $\Omega_j$ is equipped with the weight $\mu_j$, $j=1,2$, and $\mu_2=\mu_1\circ\Phi^{-1}$. Then we have the transformation formula for the weighted Bergman kernels
\[
B_{\Omega_1,\mu_1}(z,\zeta)=\det J_{\mathbb{C}}\Phi(z)B_{\Omega_2,\mu_2}(\Phi(z),\Phi(\zeta))\det\overline{J_{\mathbb{C}}\Phi(\zeta)},
\]
where $(z,\zeta)\in\Omega_1\times\Omega_1$.
\end{corollary}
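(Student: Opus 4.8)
The plan is to realize the stated formula as the kernel-level shadow of an operator identity produced by Lemma \ref{induce_projection}, applied to a weighted composition operator built out of $\Phi$. First I would introduce
\[
U_\Phi : L^2(\Omega_2,\mu_2)\to L^2(\Omega_1,\mu_1),\qquad U_\Phi(f)(z)=\det J_{\mathbb{C}}\Phi(z)\,f(\Phi(z)),
\]
and check it is an isometry by a single change of variables. Since $\Phi$ is holomorphic, the real Jacobian of $\Phi$ equals $\abs{\det J_{\mathbb{C}}\Phi}^2$, and the hypothesis $\mu_2=\mu_1\circ\Phi^{-1}$ gives $\mu_1=\mu_2\circ\Phi$; hence under $w=\Phi(z)$ the factor $\abs{\det J_{\mathbb{C}}\Phi(z)}^2$ exactly absorbs the substitution Jacobian while $\mu_1$ turns into $\mu_2$, yielding $\norm{U_\Phi(f)}_{L^2(\Omega_1,\mu_1)}=\norm{f}_{L^2(\Omega_2,\mu_2)}$. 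Because $\det J_{\mathbb{C}}\Phi$ is holomorphic and non-vanishing (as $\Phi$ is biholomorphic) and $f\mapsto f\circ\Phi$ preserves holomorphy with holomorphic inverse $f\mapsto f\circ\Phi^{-1}$, the map $U_\Phi$ restricts to a bijective isometry $A^2(\Omega_2,\mu_2)\to A^2(\Omega_1,\mu_1)$; in particular $U_\Phi\big(A^2(\Omega_2,\mu_2)\big)=A^2(\Omega_1,\mu_1)$.

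Next I would apply Lemma \ref{induce_projection} with $F=U_\Phi$, $S=A^2(\Omega_2,\mu_2)$, and $P=\mathcal{B}_{\Omega_2,\mu_2}$. Since $U_\Phi$ carries $A^2(\Omega_2,\mu_2)$ onto $A^2(\Omega_1,\mu_1)$, the lemma gives the operator identity
\[
\mathcal{B}_{\Omega_1,\mu_1}=U_\Phi\circ\mathcal{B}_{\Omega_2,\mu_2}\circ U_\Phi^{-1},
\]
where a direct computation using the chain-rule identity $\det J_{\mathbb{C}}\Phi^{-1}(w)=1/\det J_{\mathbb{C}}\Phi(\Phi^{-1}(w))$ shows $U_\Phi^{-1}(h)(w)=\det J_{\mathbb{C}}\Phi^{-1}(w)\,h(\Phi^{-1}(w))$.

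Finally I would expand both sides as integral operators against an arbitrary $h\in L^2(\Omega_1,\mu_1)$ and read off the kernel. Writing out $U_\Phi\circ\mathcal{B}_{\Omega_2,\mu_2}\circ U_\Phi^{-1}(h)(z)$ and performing the change of variables $w=\Phi(\zeta)$ in the integral over $\Omega_2$, the Jacobian $\abs{\det J_{\mathbb{C}}\Phi(\zeta)}^2$ combines with the factor $1/\det J_{\mathbb{C}}\Phi(\zeta)$ coming from $U_\Phi^{-1}$ to produce exactly $\overline{\det J_{\mathbb{C}}\Phi(\zeta)}$, while $\mu_2(\Phi(\zeta))$ becomes $\mu_1(\zeta)$. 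Comparing the resulting kernel with the defining kernel of $\mathcal{B}_{\Omega_1,\mu_1}$ yields the claimed formula. To pass from equality of operators to equality of kernels I would invoke uniqueness of the weighted Bergman kernel; equivalently, one observes that both candidate kernels are holomorphic in $z$ and anti-holomorphic in $\zeta$, so that agreeing as integral kernels against all $h$ forces pointwise agreement on $\Omega_1\times\Omega_1$.

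The argument is entirely routine, and the only point requiring genuine care is the bookkeeping of the complex Jacobian factors: keeping straight where the holomorphic factor $\det J_{\mathbb{C}}\Phi(z)$, the anti-holomorphic factor $\overline{\det J_{\mathbb{C}}\Phi(\zeta)}$, and the real Jacobian $\abs{\det J_{\mathbb{C}}\Phi}^2$ of the substitution each enter, and using the chain-rule identity for $\det J_{\mathbb{C}}\Phi^{-1}$ to cancel them correctly.
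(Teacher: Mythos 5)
Your proof is correct and follows essentially the same route as the paper: both construct the weighted composition isometry induced by $\Phi$ (yours is simply the inverse of the paper's $F(f)=\det J_{\mathbb{C}}(\Phi^{-1})\,f\circ\Phi^{-1}$), apply Lemma \ref{induce_projection} to get the conjugation identity between the two projections, and conclude by uniqueness of the weighted Bergman kernel. Your version merely spells out the Jacobian bookkeeping and the identification $U_\Phi\big(A^2(\Omega_2,\mu_2)\big)=A^2(\Omega_1,\mu_1)$ that the paper leaves implicit.
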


\begin{proof}
Let $F:L^2(\Omega_1,\mu_1)\to L^2(\Omega_2,\mu_2)$ be the isometry by $F(f)=\det J_{\mathbb{C}}(\Phi^{-1})f\circ\Phi^{-1}$, for any $f\in L^2(\Omega_1,\mu_1)$. Then, by Lemma \ref{induce_projection}, we have $F^*(\mathcal{B}_{\Omega_1,\mu_1})=\mathcal{B}_{\Omega_2,\mu_2}$. By the uniqueness of the weighted Bergman kernel, we obtain the transformation formula above.
\end{proof}

\begin{corollary}
\label{Lp_induce_projection}
Let $\Phi:\Omega_1\to\Omega_2$ be a biholomorphism between two domains in $\mathbb{C}^n$. Suppose $\mathcal{B}_{\Omega_j,\mu_j}$ is the weighted Bergman projection on $\Omega_j$ with the weight $\mu_j$, $j=1,2$, and $\mu_2=\abs{\det J_{\mathbb{C}}(\Phi^{-1})}^2\mu_1\circ\Phi^{-1}$. Then, for $p\ge1$, $\mathcal{B}_{\Omega_1,\mu_1}$ is $L^p(\Omega_1,\mu_1)$ bounded if and only if $\mathcal{B}_{\Omega_2,\mu_2}$ is $L^p(\Omega_2,\mu_2)$ bounded.
\end{corollary}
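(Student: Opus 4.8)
The plan is to exhibit one linear map that is simultaneously an isometry of the relevant $L^p$ spaces for every $p$ and that conjugates one weighted Bergman projection into the other. The natural candidate is the pullback $V\colon L^p(\Omega_2,\mu_2)\to L^p(\Omega_1,\mu_1)$, $V(f)=f\circ\Phi$. Note the contrast with the proof of Corollary~\ref{transform_kernel}: there the $p=2$ isometry carries the holomorphic factor $\det J_{\mathbb{C}}(\Phi^{-1})$, which is an $L^2$ isometry but fails to be an $L^p$ isometry for $p\neq2$. The point of the present weight normalization $\mu_2=\abs{\det J_{\mathbb{C}}(\Phi^{-1})}^2\,\mu_1\circ\Phi^{-1}$ is exactly to absorb that Jacobian, so that the bare pullback $V$, which carries no Jacobian factor and therefore preserves holomorphicity, becomes an isometry for all $p$ at once.

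First I would verify the simultaneous isometry property. Writing $w=\Phi(z)$, the real change of variables for the biholomorphism $\Phi$ gives $dV(w)=\abs{\det J_{\mathbb{C}}\Phi(z)}^2\,dV(z)$, so that
\[
\int_{\Omega_1}\abs{f(\Phi(z))}^p\mu_1(z)\,dV(z)=\int_{\Omega_2}\abs{f(w)}^p\,(\mu_1\circ\Phi^{-1})(w)\,\abs{\det J_{\mathbb{C}}(\Phi^{-1})(w)}^2\,dV(w),
\]
and the right-hand side equals $\norm{f}_{L^p(\Omega_2,\mu_2)}^p$ by the definition of $\mu_2$. Thus $V$ is an isometric isomorphism of $L^p(\Omega_2,\mu_2)$ onto $L^p(\Omega_1,\mu_1)$ for each $p\geq1$, with inverse $V^{-1}(g)=g\circ\Phi^{-1}$.

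Next I would pin down the conjugation. Specializing to $p=2$, since $\Phi$ and $\Phi^{-1}$ are biholomorphic, $V$ and $V^{-1}$ send holomorphic functions to holomorphic functions, so $V$ restricts to an isometry of $A^2(\Omega_2,\mu_2)$ onto $A^2(\Omega_1,\mu_1)$. Lemma~\ref{induce_projection} applied to the isometry $V$ then yields $\mathcal{B}_{\Omega_1,\mu_1}=V\circ\mathcal{B}_{\Omega_2,\mu_2}\circ V^{-1}$ on $L^2$. Reading this identity at the level of integral kernels (and using the uniqueness of the weighted Bergman kernel as in Corollary~\ref{transform_kernel}) gives the clean relation $B_{\Omega_1,\mu_1}(z,\eta)=B_{\Omega_2,\mu_2}(\Phi(z),\Phi(\eta))$, with no Jacobian factors surviving. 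Consequently the operator identity $\mathcal{B}_{\Omega_1,\mu_1}=V\circ\mathcal{B}_{\Omega_2,\mu_2}\circ V^{-1}$ continues to hold as an identity of integral operators on $L^2\cap L^p$.

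With these ingredients the conclusion is immediate on a dense subspace: for $f\in L^2(\Omega_1,\mu_1)\cap L^p(\Omega_1,\mu_1)$,
\[
\norm{\mathcal{B}_{\Omega_1,\mu_1}f}_{L^p(\Omega_1,\mu_1)}=\norm{\mathcal{B}_{\Omega_2,\mu_2}(V^{-1}f)}_{L^p(\Omega_2,\mu_2)}\quad\text{and}\quad\norm{V^{-1}f}_{L^p(\Omega_2,\mu_2)}=\norm{f}_{L^p(\Omega_1,\mu_1)},
\]
so the two projections have equal $L^p$ operator norms on this subspace, and one is $L^p$ bounded exactly when the other is. The main obstacle is the bookkeeping needed to pass rigorously between the $L^2$-orthogonal-projection definition and the $L^p$ extension: one must check that $L^2\cap L^p$ is dense in $L^p(\Omega,\mu)$ (which holds since compactly supported bounded functions lie in both spaces for a locally integrable weight) and that the kernel integral operator is the genuine $L^p$ extension of the conjugated map, so that the norm equality on the dense subspace propagates to the full $L^p$ boundedness statement.
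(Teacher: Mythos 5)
Your proof is correct and is exactly the argument the paper intends: the paper's one-line proof ``direct consequence of Lemma \ref{induce_projection}'' relies on precisely the Jacobian-free pullback $f\mapsto f\circ\Phi$ being a simultaneous $L^p$-isometry (for the stated weight relation) that preserves holomorphy and hence conjugates the two projections. You have simply written out the change-of-variables verification, the resulting kernel identity, and the density bookkeeping that the paper leaves implicit.
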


\begin{proof}
This is a direct consequence of Lemma \ref{induce_projection}.
\end{proof}

\subsection{The Inflation Principle}
Now we are ready to prove the inflation principle, which generalizes the result \cite[Corollary 4.6]{Z}.

\begin{proposition}[Inflation Principle]
\label{inflation_theorem}
Let $\Omega\subset\mathbb{C}^n$ be a domain and let $\mu=\abs{g}^2$ for some non-vanishing holomorphic function $g$ on $\Omega$. Suppose that $\widetilde{\Omega}\subset\mathbb{C}^{m+n}$ is the inflation of $\Omega$ via $\mu$ as in Definition \ref{inflation}. Let $\lambda>0$ be a continuous function on $\Omega$. Then, for $p>1$, the weighted Bergman projection $\mathcal{B}_{\widetilde{\Omega},\lambda}$ on $\widetilde{\Omega}$ with the weight $\lambda$ is $L^p(\widetilde{\Omega},\lambda)$ bounded if and only if the weighted Bergman projection $\mathcal{B}_{\Omega,\mu^m\lambda}$ on $\Omega$ with the weight $\mu^m\lambda$ is $L^p(\Omega,\mu^m\lambda)$ bounded.
\end{proposition}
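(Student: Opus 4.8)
The plan is to straighten the Hartogs domain $\widetilde{\Omega}$ into a product domain $\mathbb{B}^m\times\Omega$ by an explicit biholomorphism, transfer the boundedness question through the transformation rule of Corollary \ref{Lp_induce_projection}, and then strip off the ball factor via Lemma \ref{product_operator} together with the classical $L^p$ regularity of the (unweighted) Bergman projection on the ball. First I would set up the biholomorphism. Since $g$ is holomorphic and non-vanishing on $\Omega$, the fiber of $\widetilde{\Omega}$ over $w\in\Omega$ is the Euclidean ball $\{z\in\mathbb{C}^m:\abs{z}<\abs{g(w)}\}$, so the map
\[
\Phi:\widetilde{\Omega}\to\mathbb{B}^m\times\Omega,\qquad \Phi(z,w)=\left(\frac{z}{g(w)},w\right),
\]
is a biholomorphism with inverse $\Phi^{-1}(\zeta,w)=(g(w)\zeta,w)$, where $\mathbb{B}^m$ denotes the unit ball in $\mathbb{C}^m$. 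Writing the complex Jacobian matrix of $\Phi^{-1}$ in block form with respect to $(\zeta,w)$, the $(z,\zeta)$-block equals $g(w)I_m$, the $(w,w)$-block equals $I_n$, and the $(w,\zeta)$-block vanishes, so the matrix is block triangular and $\det J_{\mathbb{C}}(\Phi^{-1})(\zeta,w)=g(w)^m$.

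Next I would apply Corollary \ref{Lp_induce_projection} with $\Omega_1=\widetilde{\Omega}$ and $\mu_1=\lambda$, extended to $\widetilde{\Omega}$ as a function of $w$ alone. Because $\lambda$ depends only on $w$ and $\Phi^{-1}$ leaves the $w$-coordinate fixed, the transformed weight is
\[
\mu_2(\zeta,w)=\abs{\det J_{\mathbb{C}}(\Phi^{-1})(\zeta,w)}^2\lambda(w)=\abs{g(w)}^{2m}\lambda(w)=\mu(w)^m\lambda(w),
\]
so $\mathcal{B}_{\widetilde{\Omega},\lambda}$ is $L^p(\widetilde{\Omega},\lambda)$ bounded if and only if $\mathcal{B}_{\mathbb{B}^m\times\Omega,\mu^m\lambda}$ is $L^p(\mathbb{B}^m\times\Omega,\mu^m\lambda)$ bounded. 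The exact power $m$, which is the whole content of the statement, arises precisely from this determinant.

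Finally I would reduce from the product domain to $\Omega$. On $\mathbb{B}^m\times\Omega$ the weight $\mu^m\lambda$ is constant in the ball variable, hence a product weight $1\otimes(\mu^m\lambda)$; consequently $A^2(\mathbb{B}^m\times\Omega,1\otimes\mu^m\lambda)$ is the Hilbert tensor product of $A^2(\mathbb{B}^m)$ and $A^2(\Omega,\mu^m\lambda)$, and the Bergman projection factors as $\mathcal{B}_{\mathbb{B}^m\times\Omega}=\mathcal{B}_{\mathbb{B}^m}\otimes\mathcal{B}_{\Omega,\mu^m\lambda}$, with kernel the product of the two kernels. Since the Bergman projection on the ball $\mathbb{B}^m$ is $L^p$ bounded for every $p\in(1,\infty)$, Lemma \ref{product_operator} yields both directions at once: boundedness of $\mathcal{B}_{\Omega,\mu^m\lambda}$ gives boundedness of the product, and conversely, as both factors are non-trivial, boundedness of the product forces boundedness of each factor, in particular of $\mathcal{B}_{\Omega,\mu^m\lambda}$. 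Chaining this with the equivalence of the previous paragraph proves the proposition.

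The hard part will not be the biholomorphism or the Jacobian bookkeeping, but rather the clean justification that, for a product weight, the weighted Bergman projection on the product genuinely splits as the tensor product of the factor projections, so that Lemma \ref{product_operator} applies in both directions. I would record this splitting as a short preliminary observation: it rests on identifying $L^2(\mathbb{B}^m\times\Omega,1\otimes\mu^m\lambda)$ with the Hilbert tensor product of the factor spaces, on the density of finite sums of split holomorphic functions in $A^2$ of the product, and on the fact that the orthogonal projection onto a (closed) tensor-product subspace is the tensor product of the factor projections, whose Schwartz kernel is then the product of the factor kernels.
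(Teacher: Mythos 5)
Your proposal is correct and follows essentially the same route as the paper: the biholomorphism $\Phi(z,w)=\big(z/g(w),w\big)$ onto $\mathbb{B}^m\times\Omega$, the Jacobian computation $\abs{\det J_{\mathbb{C}}(\Phi^{-1})}^2=\mu^m$ combined with Corollary \ref{Lp_induce_projection}, and then Lemma \ref{product_operator} together with the $L^p$ boundedness of $\mathcal{B}_{\mathbb{B}^m}$. You supply more detail than the paper does (in particular the tensor-product splitting of the projection on the product domain, which the paper leaves implicit in Lemma \ref{product_operator}), but the argument is the same.
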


\begin{proof}
Since $g$ is holomorphic and non-vanishing, using the notation in Definition \ref{inflation}, we obtain the biholomorphism $\Phi:\widetilde{\Omega}\to\mathbb{B}^m\times\Omega$ via $\Phi(z,w)=\big(z/g(w),w\big)$, where $\mathbb{B}^m$ is the unit ball in $\mathbb{C}^m$.

A direct computation shows that $\abs{\det J_{\mathbb{C}}(\Phi^{-1})}^2=\mu^m$. By Corollary \ref{Lp_induce_projection}, Lemma \ref{product_operator}, and the fact that the Bergman projection $\mathcal{B}_{\mathbb{B}^m}$ on $\mathbb{B}^m$ is $L^p$-bounded for all $p\in(1,\infty)$, we see that $\mathcal{B}_{\widetilde{\Omega},\lambda}$ is $L^p$-bounded if and only if $\mathcal{B}_{\Omega,\mu^m\lambda}$ is $L^p$-bounded.
\end{proof}

\section{The Punctured Disk and the Hartogs Triangle}
\label{sec3}

\subsection{The Punctured Disk}
Using the notation in Proposition \ref{inflation_theorem}, if we take $\Omega=\mathbb{D}^*=\mathbb{D}\setminus\{0\}$ (the punctured disk) via the weight $\mu(w)=\abs{w}^2$, where $w\in\mathbb{D}^*$, then the inflation $\widetilde{\Omega}=\mathbb{H}$ is the Hartogs triangle. To prove Theorem \ref{LpH}, it suffices to consider the punctured disk $\mathbb{D}^*$ with the weight $\lambda(z)=\abs{z}^{s'}$, where $z\in\mathbb{D}^*$ and $s'\in\mathbb{R}$.

\begin{lemma}
For $s'\in\mathbb{R}$ with the unique expression $s'=s+2k$, where $k\in\mathbb{Z}$ and $s\in(0,2]$, the weighted Bergman kernel ${B}_{s'}(z,\zeta)$ associated to $\mathbb{D}^*$ with the weight $\lambda(z)=\abs{z}^{s'}$ has a ``homotopic" expression
\begin{equation}
\label{kernel}
\begin{split}
B_{s'}(z,\zeta)
&=\frac{s}{2}B_{2k+2}(z,\zeta)+\left(1-\frac{s}{2}\right)B_{2k}(z,\zeta)\\
&=\frac{s}{2}(z\overline{\zeta})^{-(k+1)}B_0(z,\zeta)+\left(1-\frac{s}{2}\right)(z\overline{\zeta})^{-k}B_0(z,\zeta),
\end{split}
\end{equation}
where $B_0(z,\zeta)$ is the (ordinary) Bergman kernel associated to the unit disk and $(z,\zeta)\in\mathbb{D}^*\times\mathbb{D}^*$. 
\end{lemma}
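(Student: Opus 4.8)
The plan is to compute $B_{s'}$ explicitly from an orthonormal basis, exploiting the fact that the weight $\lambda(z)=\abs{z}^{s'}$ is radial. First I would expand an arbitrary $f\in A^2(\mathbb{D}^*,\abs{z}^{s'})$ in its Laurent series $f(z)=\sum_{n\in\mathbb{Z}}a_nz^n$ on $\mathbb{D}^*$. Since the weight depends only on $\abs{z}$, distinct monomials $z^n$ are mutually orthogonal, and integrating in polar coordinates gives
\[
\norm{z^n}^2=2\pi\int_0^1 r^{2n+s'+1}\,dr=\frac{2\pi}{2n+s'+2}
\]
whenever $2n+s'+2>0$, and $+\infty$ otherwise. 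Writing $s'=s+2k$ with $s\in(0,2]$, the condition $2n+s'+2>0$ reads $n>-\frac{s}{2}-k-1$, which for integer $n$ is equivalent to $n\ge -k-1$; the endpoint $s=2$ must be checked by hand, since there $n=-k-2$ yields the divergent integral $\int_0^1 r^{-1}\,dr$ while $n=-k-1$ still converges.

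The next step is to promote this to a genuine orthonormal basis. Using the orthogonality of the $z^n$ and Parseval on circles one obtains $\norm{f}^2_{L^2}=\sum_n\abs{a_n}^2\norm{z^n}^2$, so membership in $A^2(\mathbb{D}^*,\abs{z}^{s'})$ forces $a_n=0$ for every $n\le -k-2$. Hence $\{z^n/\norm{z^n}:n\ge -k-1\}$ is an orthonormal basis and the reproducing kernel is
\[
B_{s'}(z,\zeta)=\sum_{n\ge -k-1}\frac{(z\overline{\zeta})^n}{\norm{z^n}^2}=\frac{1}{2\pi}\sum_{n\ge -k-1}(2n+s'+2)(z\overline{\zeta})^n.
\]

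To match the claimed form, I would record the two even-weight kernels. Repeating the computation above with $s'=2k$ (respectively $s'=2k+2$), or alternatively invoking the transformation formula of Corollary \ref{transform_kernel}, gives $B_{2k}(z,\zeta)=(z\overline{\zeta})^{-k}B_0(z,\zeta)$ and $B_{2k+2}(z,\zeta)=(z\overline{\zeta})^{-(k+1)}B_0(z,\zeta)$, where $B_0(z,\zeta)=\frac{1}{\pi(1-z\overline{\zeta})^2}=\frac{1}{\pi}\sum_{m\ge0}(m+1)(z\overline{\zeta})^m$; these two identities are exactly the second equality in \eqref{kernel}. For the first equality, note that $B_{2k}$ is supported on indices $n\ge -k$ while $B_{2k+2}$ is supported on $n\ge -k-1$, and that the coefficient splitting reduces to the elementary identity
\[
2n+s'+2=s(n+k+2)+(2-s)(n+k+1),
\]
which expresses the $(z\overline{\zeta})^n$-coefficient of $B_{s'}$ as $\frac{s}{2}$ times that of $B_{2k+2}$ plus $(1-\frac{s}{2})$ times that of $B_{2k}$.

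The only delicate point — and the step I expect to be the main obstacle — is the bookkeeping at the bottom index $n=-k-1$, where $B_{2k}$ contributes no term but $B_{s'}$ and $B_{2k+2}$ both do. There the convex combination produces $\frac{s}{2}\cdot\frac{1}{\pi}$, which coincides with the $B_{s'}$ coefficient $\frac{1}{2\pi}\big(2(-k-1)+s'+2\big)=\frac{s}{2\pi}$; so the weights $\frac{s}{2}$ and $1-\frac{s}{2}$ are forced and the identity holds term by term for all $n\ge -k-1$. Combining this with the even-kernel representations yields \eqref{kernel}.
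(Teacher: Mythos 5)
Your proposal is correct and follows essentially the same route as the paper: both determine the monomial orthonormal basis of $A^2(\mathbb{D}^*,\abs{z}^{s'})$ from the radial norm integrals (with the same cutoff $n\ge -k-1$, including the borderline check at $s=2$) and read off the kernel as the resulting series. The only divergence is in the final step --- the paper sums the series to a closed rational form and invokes the periodicity $B_{s'+2}(z,\zeta)=(z\overline{\zeta})^{-1}B_{s'}(z,\zeta)$ to reduce to $s'\in(0,2]$, whereas you verify the convex combination coefficient-by-coefficient via $2n+s'+2=s(n+k+2)+(2-s)(n+k+1)$ with the correct bookkeeping at the bottom index $n=-k-1$ --- which is an equally valid, slightly more direct piece of algebra.
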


\begin{proof}
We first determine an orthonormal basis for the space $A^2\big(\mathbb{D}^*,\abs{z}^{s'}\big)$. Suppose $m,n\in\mathbb{Z}$; a direct computation shows,\footnote{We have normalized the area of $\mathbb{D}$ by setting $\mbox{Area}(\mathbb{D})=1$.} for $m+n+s'+2>0$,
\[
\int_{\mathbb{D}^*}z^n\overline{z}^m\abs{z}^{s'}\,dA(z)=\left\{ \begin{array}{rcl}
0, & \mbox{if} & n\neq m,\\
\frac{2}{2m+2+s'}, & \mbox{if} & n = m.
\end{array}\right.
\]
Therefore $\left\{\sqrt{\frac{2m+2+s'}{2}}z^m\right\}_{m>-(1+\frac{s'}{2})}$ is an orthonormal basis. So the corresponding weighted Bergman kernel is
\begin{equation}
\label{eq3.2}
\begin{split}
B_{s'}(z,\zeta)
&= \sum_{m>-(1+\frac{s'}{2})}\frac{2m+2+s'}{2}z^m\overline{\zeta}^m\\
&=\frac{(t+\frac{s'}{2})(z\overline{\zeta})^{t-1}-(t-1+\frac{s'}{2})(z\overline{\zeta})^t}{(1-z\overline{\zeta})^2},
\end{split}
\end{equation}
where $t$ is the smallest integer satisfying $t>-\frac{s'}{2}$.

From \eqref{eq3.2}, it is easy to verify the following equation
\begin{equation}
\label{eq3.3}
B_{s'+2}(z,\zeta)=(z\overline{\zeta})^{-1}B_{s'}(z,\zeta).
\end{equation}
Hence 2 is a ``period" of $s'$ for the weighted Bergman kernel $B_{s'}(z,\zeta)$. Let $s'=s\in(0,2]$. Then $t=0$, and from \eqref{eq3.2} we have
\begin{equation}
\label{eq3.4}
\begin{split}
B_s(z,\zeta)
&=\frac{\frac{s}{2}(z\overline{\zeta})^{-1}+(1-\frac{s}{2})}{(1-z\overline{\zeta})^2}\\
&=\frac{s}{2}(z\overline{\zeta})^{-1}B_0(z,\zeta)+\left(1-\frac{s}{2}\right)B_0(z,\zeta).
\end{split}
\end{equation}
Therefore, combining \eqref{eq3.3} and \eqref{eq3.4}, we obtain \eqref{kernel}.
\end{proof}

Following the idea in \cite{C}, we need three lemmas.

\begin{lemma}[Schur's Test]
\label{Schur}
Suppose $X$ is a measure space with a positive measure $\mu$. Let $T(x,y)$ be a positive measurable function on $X\times X$, and let $\mathcal{T}$ be the integral operator associated to the kernel function $T(x,y)$.

Given $p\in(1,\infty)$ with its conjugate exponent $p'$, if there exists a strictly positive function $h$ a.e. on $X$ and a constant $M>0$, such that
\begin{enumerate}
\item
$\int_{X}T(x,y)h(y)^{p'}\,d\mu(y)\le Mh(x)^{p'}$, for a.e. $x\in X$, and
\item
$\int_{X}T(x,y)h(x)^p\,d\mu(x)\le Mh(y)^p$, for a.e. $y\in X$. 
\end{enumerate}
Then $\mathcal{T}$ is bounded on $L^p(X,d\mu)$ with $\|\mathcal{T}\|\le M$.
\end{lemma}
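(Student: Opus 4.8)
The plan is to estimate $\norm{\mathcal{T}f}_{L^p(X,d\mu)}$ directly for an arbitrary $f\in L^p(X,d\mu)$ by applying H\"older's inequality pointwise in $x$ and then integrating, with the auxiliary function $h$ serving to balance the two factors. The crucial maneuver---and essentially the only genuinely clever step---is to split the kernel as $T(x,y)=T(x,y)^{1/p'}\,T(x,y)^{1/p}$ and to insert the factor $h(y)h(y)^{-1}=1$, arranging matters so that hypothesis (1) can be brought to bear after H\"older.

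Concretely, first I would write, for a.e.\ $x\in X$,
\[
\abs{\mathcal{T}f(x)}\le\int_X \big(T(x,y)^{1/p'}h(y)\big)\big(T(x,y)^{1/p}h(y)^{-1}\abs{f(y)}\big)\,d\mu(y),
\]
and apply H\"older's inequality with exponents $p'$ and $p$. The first factor is controlled by hypothesis (1), since $\big(\int_X T(x,y)h(y)^{p'}\,d\mu(y)\big)^{1/p'}\le M^{1/p'}h(x)$, which yields the pointwise bound
\[
\abs{\mathcal{T}f(x)}^p\le M^{p/p'}h(x)^p\int_X T(x,y)h(y)^{-p}\abs{f(y)}^p\,d\mu(y).
\]

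Next I would integrate in $x$ over $X$ and swap the order of integration by Tonelli's theorem (legitimate since the integrand is nonnegative and $T,h$ are measurable), obtaining
\[
\int_X\abs{\mathcal{T}f(x)}^p\,d\mu(x)\le M^{p/p'}\int_X h(y)^{-p}\abs{f(y)}^p\Big(\int_X T(x,y)h(x)^p\,d\mu(x)\Big)\,d\mu(y).
\]
Now hypothesis (2) bounds the inner integral by $Mh(y)^p$; the factors $h(y)^{-p}$ and $h(y)^p$ then cancel, leaving $M^{p/p'+1}\int_X\abs{f(y)}^p\,d\mu(y)$. Finally the elementary identity $p/p'+1=p$ (from $p/p'=p-1$) gives $\norm{\mathcal{T}f}_{L^p}^p\le M^p\norm{f}_{L^p}^p$, and hence $\norm{\mathcal{T}}\le M$.

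The main obstacle is really just recognizing the correct factorization; once $h$ is inserted to symmetrize the two hypotheses, everything reduces to a single application each of H\"older and Tonelli together with the conjugate-exponent arithmetic. A minor technical point worth recording is that the positivity of $T$ and the strict positivity of $h$ guarantee that the fractional powers $T^{1/p},T^{1/p'}$ and the negative powers $h^{-p}$ are well-defined a.e., so that the H\"older splitting and the subsequent cancellation are valid.
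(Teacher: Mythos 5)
Your argument is correct and complete: the factorization $T=T^{1/p'}T^{1/p}$ with the inserted weights $h(y)$ and $h(y)^{-1}$, followed by H\"older, Tonelli, and hypothesis (2), is the standard proof of Schur's test, and the exponent arithmetic $M^{p/p'+1}=M^p$ gives exactly the claimed bound $\norm{\mathcal{T}}\le M$. The paper does not reproduce this argument but simply cites \cite{C} and \cite{HKZ}, where the same H\"older--Tonelli proof appears, so your proposal coincides with the intended one.
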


\begin{proof}
See \cite[Theorem 4.1]{C}, or \cite{HKZ} for details.
\end{proof}

\begin{lemma}
\label{kerI}
For $-1<\alpha<0$ and $\beta>-2$, define
\[
I_{\alpha,\beta}(z)=\int_{\mathbb{D}^*}\frac{\big(1-\abs{\zeta}^2\big)^{\alpha}\abs{\zeta}^{\beta}\,dA(\zeta)}{\abs{1-z\overline{\zeta}}^2},
\]
where $z\in\mathbb{D}^*$. Then we have\footnote{The notation $A\approx B$ means there is a constant $c>0$ so that $c^{-1}B\le A\le cB$.} $I_{\alpha,\beta}(z)\approx\big(1-\abs{z}^2\big)^{\alpha}$, for any $z\in\mathbb{D}^*$.
\end{lemma}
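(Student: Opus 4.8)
The plan is to recognize $I_{\alpha,\beta}$ as a Forelli--Rudin type integral and to reduce it to the weightless case by peeling off the factor $\abs{\zeta}^\beta$. The starting point is the classical estimate (see \cite{HKZ}): for $-1<\alpha<0$,
\[
J(z):=\int_{\mathbb{D}}\frac{\big(1-\abs{\zeta}^2\big)^{\alpha}}{\abs{1-z\overline{\zeta}}^2}\,dA(\zeta)\approx\big(1-\abs{z}^2\big)^{\alpha},
\]
which is exactly the case in which the exponent $2$ in the denominator exceeds $2+\alpha$ by $-\alpha>0$, so that the integral grows like a negative power of $1-\abs{z}^2$. Thus the whole task is to show that inserting $\abs{\zeta}^\beta$ with $\beta>-2$ changes nothing up to comparability.

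To do this I would split $\mathbb{D}^*$ into the inner disk $A=\{\abs{\zeta}<1/2\}$ and the annulus $B=\{1/2\le\abs{\zeta}<1\}$, and estimate the two pieces separately. On $A$ every factor except $\abs{\zeta}^\beta$ is comparable to a constant uniformly in $z$: there $\big(1-\abs{\zeta}^2\big)^\alpha\approx1$, and since $\abs{z}\abs{\zeta}<1/2$ one has $\abs{1-z\overline{\zeta}}^2\approx1$ for every $z\in\mathbb{D}$. Hence the $A$-piece satisfies $\int_A\frac{(1-\abs{\zeta}^2)^\alpha\abs{\zeta}^\beta}{\abs{1-z\overline{\zeta}}^2}\,dA(\zeta)\approx\int_A\abs{\zeta}^\beta\,dA(\zeta)$, a finite positive constant, finite precisely because $\int_0^{1/2}r^{\beta+1}\,dr<\infty\iff\beta>-2$; this is exactly where the hypothesis on $\beta$ is used. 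On $B$ the factor $\abs{\zeta}^\beta$ is itself comparable to a positive constant, so $\int_B\frac{(1-\abs{\zeta}^2)^\alpha\abs{\zeta}^\beta}{\abs{1-z\overline{\zeta}}^2}\,dA(\zeta)\approx\int_B\frac{(1-\abs{\zeta}^2)^\alpha}{\abs{1-z\overline{\zeta}}^2}\,dA(\zeta)$, which differs from $J(z)$ only by the integral over $A$, a quantity that is again $\approx1$ uniformly in $z$ by the same bounds.

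For the upper bound I would combine these: the $A$-piece is $\lesssim1$ and the $B$-piece is $\le J(z)\lesssim\big(1-\abs{z}^2\big)^\alpha$; since $\alpha<0$ forces $\big(1-\abs{z}^2\big)^\alpha\ge1$, both are $\lesssim\big(1-\abs{z}^2\big)^\alpha$, giving $I_{\alpha,\beta}(z)\lesssim\big(1-\abs{z}^2\big)^\alpha$. For the lower bound I would split on the location of $z$. When $\abs{z}\le1/2$ the target $\big(1-\abs{z}^2\big)^\alpha$ is comparable to $1$, while the $A$-piece alone contributes a fixed positive constant, so $I_{\alpha,\beta}(z)\gtrsim1\approx\big(1-\abs{z}^2\big)^\alpha$. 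When $\abs{z}>1/2$ I would invoke the lower bound in the Forelli--Rudin estimate, whose mass comes from $\zeta$ near the boundary point $z/\abs{z}$; that region lies inside $B$, so $\int_B\frac{(1-\abs{\zeta}^2)^\alpha}{\abs{1-z\overline{\zeta}}^2}\,dA(\zeta)\gtrsim\big(1-\abs{z}^2\big)^\alpha$, and the $B$-piece of $I_{\alpha,\beta}$ inherits this bound.

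The step I expect to be the main obstacle is making the lower bound uniform in $z$: subtracting the bounded $A$-contribution from $J(z)$ is harmless when $\big(1-\abs{z}^2\big)^\alpha$ is large but could swamp it for $z$ near the origin. The $\abs{z}\le1/2$ versus $\abs{z}>1/2$ dichotomy above is designed precisely to sidestep this, ensuring that for small $\abs{z}$ the lower bound is supplied by the inner disk and for large $\abs{z}$ by the boundary concentration of the Forelli--Rudin integral, which is guaranteed to sit in the annulus $B$.
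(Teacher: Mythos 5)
Your argument is correct, but note that the paper does not actually prove this lemma: it simply cites \cite[Lemma 3.3]{C}, the author's earlier paper, so your write-up is a genuinely self-contained alternative rather than a reproduction of the paper's reasoning. The structure --- peel off $\abs{\zeta}^{\beta}$ by splitting into the inner disk $A$ and the annulus $B$, use $\beta>-2$ only to make the $A$-piece a finite positive constant, and fall back on the classical Forelli--Rudin estimate $J(z)\approx(1-\abs{z}^2)^{\alpha}$ for the $B$-piece --- is exactly the right reduction, and your identification of the delicate point (the lower bound must not be obtained by subtracting the bounded $A$-contribution from $J(z)$) is on target. One small repair: in the lower bound for $\abs{z}>1/2$, the pseudo-ball $\{\zeta:\abs{\zeta-z}<(1-\abs{z})/2\}$ that carries the mass of the Forelli--Rudin integral is contained in $B$ only when $\abs{z}\ge 2/3$, so for $\abs{z}$ slightly above $1/2$ it is not literally true that ``that region lies inside $B$.'' This is cosmetic: for $\abs{z}$ in any compact subinterval of $[0,1)$ the quantity $(1-\abs{z}^2)^{\alpha}$ is comparable to $1$ and the $A$-piece alone supplies the lower bound, so you should simply run your first case for, say, $\abs{z}\le 2/3$ and the boundary-concentration case for $\abs{z}>2/3$ (or shrink the pseudo-ball). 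With that adjustment the proof is complete.
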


\begin{proof}
See \cite[Lemma 3.3]{C}.
\end{proof}

\begin{lemma}
\label{unbounded}
Let $a_j=\left(\frac{1}{j}\right)^j$, $j=1,2,3,\dots$. For $p\ge1$, the sum $A_{n,p}=\sum_{j=1}^nj\left(a_j^{p/j}-a_{j+1}^{p/j}\right)$ diverges when $p=1$ and converges when $p>1$, as $n\to\infty$. More precisely, we have
\[
\lim_{n\to\infty}A_{n,1}=\infty
\]
and
\[
\lim_{n\to\infty}A_{n,p}\le c\sum_{j=1}^{\infty}\frac{1}{j^{1+\epsilon}}<\infty
\]
for all $p>1$ with some $c>0$ and sufficiently small $\epsilon>0$.
\end{lemma}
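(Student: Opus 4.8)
The plan is to reduce the summand to a transparent closed form and then treat the two cases $p=1$ and $p>1$ by elementary one-sided estimates. Writing $a_j=j^{-j}$, I would first record the clean identities $a_j^{p/j}=j^{-p}$ and $a_{j+1}^{p/j}=(j+1)^{-p(j+1)/j}=(j+1)^{-p}(j+1)^{-p/j}$, so that the $j$-th summand becomes
\[
T_j:=j\bigl(a_j^{p/j}-a_{j+1}^{p/j}\bigr)=j^{1-p}-j(j+1)^{-p}(j+1)^{-p/j}.
\]
Since $j\mapsto a_j=j^{-j}$ is strictly decreasing, each $T_j$ is strictly positive, so $A_{n,p}$ is increasing in $n$ and its limit exists in $(0,\infty]$. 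Everything then reduces to a one-sided bound in each case, which is what makes both halves of the lemma clean.

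For the divergence at $p=1$, I would exploit the monotonicity of $t\mapsto a_{j+1}^{t}$ (recall $0<a_{j+1}<1$). Because $p/j>p/(j+1)$, this gives $a_{j+1}^{p/j}<a_{j+1}^{p/(j+1)}=(j+1)^{-p}$, hence a lower bound $T_j>j^{1-p}-j(j+1)^{-p}$. Specializing to $p=1$ yields $T_j>1-\tfrac{j}{j+1}=\tfrac{1}{j+1}$, and since $\sum_j \tfrac{1}{j+1}$ diverges we conclude $A_{n,1}\to\infty$.

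For the convergence at $p>1$, I would instead produce an upper bound on $T_j$, which requires a lower bound on $a_{j+1}^{p/j}$. Writing $(j+1)^{-p/j}=e^{-p\ln(j+1)/j}$ and using $e^{-x}\ge 1-x$ gives $a_{j+1}^{p/j}\ge (j+1)^{-p}\bigl(1-\tfrac{p\ln(j+1)}{j}\bigr)$. Combined with the mean value theorem estimate $j\bigl(j^{-p}-(j+1)^{-p}\bigr)\le p\,j^{-p}$ and $(j+1)^{-p}\le j^{-p}$, this yields
\[
T_j\le p\,j^{-p}+p\,(j+1)^{-p}\ln(j+1)\le p\,j^{-p}\bigl(1+\ln(j+1)\bigr).
\]
Finally, choosing any $\epsilon\in(0,p-1)$ makes $j^{-(p-1-\epsilon)}\bigl(1+\ln(j+1)\bigr)$ bounded over $j\ge1$, so $T_j\le c\,j^{-(1+\epsilon)}$ and therefore $\lim_n A_{n,p}=\sum_{j\ge1}T_j\le c\sum_{j\ge1}j^{-(1+\epsilon)}<\infty$, as claimed.

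I expect no serious obstacle: the lemma is essentially a computation, and the only point requiring care is the upper bound for $p>1$, where the logarithmic factor must be introduced through $e^{-x}\ge1-x$ and then absorbed into a $p$-summable tail by trading a power of $j$ for the logarithm. The observation that all $T_j>0$ (from monotonicity of $a_j$) is what lets both the divergence and the convergence arguments proceed through a single one-sided inequality.
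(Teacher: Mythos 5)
Your proof is correct, and it reaches the same conclusion by a genuinely more elementary route than the paper. The paper's proof hinges on the two-sided asymptotic estimate
\[
\Bigl(\tfrac{1}{j}\Bigr)^2\lesssim\tfrac{1}{j}-\Bigl(\tfrac{1}{j+1}\Bigr)^{\frac{j+1}{j}}\lesssim\Bigl(\tfrac{1}{j}\Bigr)^{2-\epsilon'},
\]
which it establishes through a lengthy L'H\^opital computation, and then feeds the lower bound into the $p=1$ case and the upper bound (together with the mean value theorem applied to $\psi(x)=x^p$ on $(0,1]$) into the $p>1$ case. You bypass that asymptotic estimate entirely: for $p=1$ you get the lower bound $T_j>\frac{1}{j+1}$ in one line from the monotonicity of $t\mapsto a_{j+1}^{t}$ for a base in $(0,1)$, and for $p>1$ you obtain the upper bound $T_j\le p\,j^{-p}\bigl(1+\ln(j+1)\bigr)$ from $e^{-x}\ge 1-x$ plus the mean value theorem applied to $x^{-p}$ (a different decomposition of the summand than the paper's: you split off $j\bigl(j^{-p}-(j+1)^{-p}\bigr)$ rather than factoring through $\tfrac1j-(\tfrac{1}{j+1})^{(j+1)/j}$). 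Your version trades the paper's sharper polynomial control of the increment for a logarithmic factor, which you then correctly absorb by choosing $\epsilon<p-1$, so the final bound has exactly the form $c\sum_j j^{-(1+\epsilon)}$ claimed in the lemma. The positivity observation $T_j>0$ is a nice touch that makes the monotone convergence of $A_{n,p}$ explicit. Both arguments are valid; yours is shorter and avoids the delicate limit computation.
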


\begin{proof}
We first prove the following statement,\footnote{The notation $A\lesssim B$ means there is a constant $c>0$ so that $A\le cB$.}
\begin{equation}
\label{eq3.5}
\left(\frac{1}{j}\right)^2\lesssim\frac{1}{j}-\left(\frac{1}{j+1}\right)^{\frac{j+1}{j}}\lesssim\left(\frac{1}{j}\right)^{2-\epsilon'}
\end{equation}
for any $\epsilon'>0$, as $j\to\infty$.

We obtain the first inequality in \eqref{eq3.5} by looking at the limit (with L'H\^{o}pital's rule applied)
\begin{align*}
\lim_{j\to\infty}\frac{\frac{1}{j}-\left(\frac{1}{j+1}\right)^{\frac{j+1}{j}}}{\left(\frac{1}{j}\right)^2}
&=\lim_{j\to\infty}\frac{-\frac{1}{j^2}+\left(\frac{1}{j+1}\right)^{\frac{j+1}{j}}\left(-\frac{1}{j^2}\log(j+1)+\frac{1}{j}\right)}{-2\left(\frac{1}{j}\right)^3}\\
&=\frac{1}{2}\lim_{j\to\infty}\frac{1+\left(\frac{1}{j+1}\right)^{\frac{1}{j}}\left(\frac{\log(j+1)}{j+1}-\frac{j}{j+1}\right)}{\frac{1}{j}}\\
&=\frac{1}{2}\left[\lim_{j\to\infty}\frac{1-\left(\frac{1}{j+1}\right)^{\frac{1}{j}}\frac{j}{j+1}}{\frac{1}{j}}+\lim_{j\to\infty}\frac{\left(\frac{1}{j+1}\right)^{\frac{1}{j}}\frac{\log(j+1)}{j+1}}{\frac{1}{j}}\right]\\
&=\frac{1}{2}\left[\lim_{j\to\infty}\frac{j}{j+1}\cdot\lim_{j\to\infty}\frac{1+\frac{1}{j}-\left(\frac{1}{j+1}\right)^{\frac{1}{j}}}{\frac{1}{j}}+\lim_{j\to\infty}\left(\frac{1}{j+1}\right)^{\frac{1}{j}}\frac{j}{j+1}\log(j+1)\right]\\
&=\frac{1}{2}\left[\lim_{j\to\infty}\frac{1-\left(\frac{1}{j+1}\right)^{\frac{1}{j}}}{\frac{1}{j}}+1+\lim_{j\to\infty}\log(j+1)\right]\\
\mbox{(L'H\^{o}pital's rule)}
&=\frac{1}{2}\left[\lim_{j\to\infty}\frac{\left(\frac{1}{j+1}\right)^{\frac{1}{j}}\left(-\frac{1}{j^2}\log(j+1)+\frac{1}{j(j+1)}\right)}{-\frac{1}{j^2}}+1+\lim_{j\to\infty}\log(j+1)\right]\\
&=\lim_{j\to\infty}\log(j+1)\\
&=\infty.
\end{align*}
For the second inequality in \eqref{eq3.5}, a similar argument shows
\[
\lim_{j\to\infty}\frac{\frac{1}{j}-\left(\frac{1}{j+1}\right)^{\frac{j+1}{j}}}{\left(\frac{1}{j}\right)^{2-\epsilon'}}=0.
\]

Now, for $p=1$, we have
\[
A_{n,1}=\sum_{j=1}^nj\left[\frac{1}{j}-\left(\frac{1}{j+1}\right)^{\frac{j+1}{j}}\right].
\]
From \eqref{eq3.5}, we obtain
\[
\lim_{n\to\infty}A_{n,1}\gtrsim\sum_{j=1}^{\infty}j\cdot\left(\frac{1}{j}\right)^2=\sum_{j=1}^{\infty}\frac{1}{j}=\infty.
\]
For $p>1$, we consider the function $\psi(x)=x^p$, $x\in(0,1]$. By the mean-value theorem, for each $j$, we have
\[
\psi\left(\frac{1}{j}\right)-\psi\left(\left(\frac{1}{j+1}\right)^{\frac{j+1}{j}}\right)=\left[\frac{1}{j}-\left(\frac{1}{j+1}\right)^{\frac{j+1}{j}}\right]\psi'(x_j),
\]
where $\psi'(x)=px^{p-1}$ and $\left(\frac{1}{j+1}\right)^{\frac{j+1}{j}}\le x_j\le\frac{1}{j}$. Since $p-1>0$, we have
\[
x_j^{p-1}\le\left(\frac{1}{j}\right)^{p-1}.
\]
So we obtain
\[
\psi\left(\frac{1}{j}\right)-\psi\left(\left(\frac{1}{j+1}\right)^{\frac{j+1}{j}}\right)\le\left[\frac{1}{j}-\left(\frac{1}{j+1}\right)^{\frac{j+1}{j}}\right]p\left(\frac{1}{j}\right)^{p-1}.
\]
Therefore, from \eqref{eq3.5}, we have
\begin{align*}
\lim_{n\to\infty}A_{n,p}
&=\sum_{j=1}^{\infty}j\left(a_j^{\frac{p}{j}}-a_{j+1}^{\frac{p}{j}}\right)\\
&=\sum_{j=1}^{\infty}j\left[\psi\left(\frac{1}{j}\right)-\psi\left(\left(\frac{1}{j+1}\right)^{\frac{j+1}{j}}\right)\right]\\
&\le\sum_{j=1}^{\infty}j\left[\frac{1}{j}-\left(\frac{1}{j+1}\right)^{\frac{j+1}{j}}\right]p\left(\frac{1}{j}\right)^{p-1}\\
&\lesssim\sum_{j=1}^{\infty}j\left(\frac{1}{j}\right)^{2-\epsilon'}\left(\frac{1}{j}\right)^{p-1}\\
&=\sum_{j=1}^{\infty}\left(\frac{1}{j}\right)^{p-\epsilon'}\\
&<\infty,
\end{align*}
for sufficiently small $\epsilon'>0$, such that $p-\epsilon'=1+\epsilon$ for some $\epsilon>0$.
\end{proof}

With these lemmas in hand, we can study the $L^p$ regularity of the weighted Bergman projection on $\mathbb{D}^*$ with the weight $\lambda(z)=\abs{z}^{s'}$, where $z\in\mathbb{D}^*$ and $s'\in\mathbb{R}$.

\begin{proposition}
\label{LpD*}
For $s'\in\mathbb{R}$ with the unique expression $s'=s+2k$, where $k\in\mathbb{Z}$ and $s\in(0,2]$, let $\mathcal{B}_{s'}$ be the weighted Bergman projection on $\mathbb{D}^*$ with the weight $\lambda(z)=\abs{z}^{s'}$, where $z\in\mathbb{D}^*$.
\begin{enumerate}
\item{For $s'\in(0,\infty)$, $\mathcal{B}_{s'}$ is $L^p$ bounded if and only if $p\in\big(\frac{s+2k+2}{s+k+1},\frac{s+2k+2}{k+1}\big)$.}
\item{For $s'\in[-3,0]$, $\mathcal{B}_{s'}$ is $L^p$ bounded for $p\in(1,\infty)$.}
\item{For $s'\in(-4,-3)$, then $k=-2$ and $s\in(0,1)$, $\mathcal{B}_{s'}$ is $L^p$ bounded if and only if $p\in\big(2-s,\frac{2-s}{1-s}\big)$.}
\item{When $s'=-4$, $\mathcal{B}_{-4}$ is $L^p$ bounded for $p\in(1,\infty)$.}
\item{For $s'\in(-\infty,-4)$, $\mathcal{B}_{s'}$ is $L^p$ bounded if and only if $p\in\big(\frac{s+2k+2}{k+1},\frac{s+2k+2}{s+k+1}\big)$.}
\end{enumerate}
\end{proposition}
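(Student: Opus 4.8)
The plan is to treat the two directions separately but with a single, uniform computation that produces all five cases at once, following the strategy of \cite{C}.

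First I would reduce the boundedness of $\mathcal{B}_{s'}$ to that of a \emph{positive} integral operator. Applying the triangle inequality to the homotopic expression \eqref{kernel}, and using that both coefficients $\frac{s}{2}$ and $1-\frac{s}{2}$ are nonnegative because $s\in(0,2]$, one gets the pointwise bound
\[
\abs{B_{s'}(z,\zeta)}\le T(z,\zeta):=\frac{1}{\abs{1-z\overline{\zeta}}^2}\left[\frac{s}{2}\abs{z\zeta}^{-(k+1)}+\left(1-\frac{s}{2}\right)\abs{z\zeta}^{-k}\right].
\]
Hence $\abs{\mathcal{B}_{s'}f}\le\mathcal{T}(\abs{f})$ pointwise, where $\mathcal{T}$ is the integral operator with kernel $T$ against the measure $\abs{\zeta}^{s'}\,dA$, so it suffices to prove that $\mathcal{T}$ is $L^p(\mathbb{D}^*,\abs{z}^{s'})$ bounded in the asserted range.

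For the sufficiency I would apply Schur's test (Lemma \ref{Schur}) with $h(z)=\abs{z}^a(1-\abs{z}^2)^b$. Splitting $T$ into its two terms and evaluating each of the four resulting integrals by Lemma \ref{kerI}, the factors $(1-\abs{z}^2)^{b}$ match the corresponding factor of $h$ provided $bp,bp'\in(-1,0)$, which is a nonempty (and hence harmless) constraint on $b$. Both Schur inequalities then reduce to the single requirement that $a$ lie in the intersection
\[
\left(-\tfrac{s+k+1}{p'},\,-\tfrac{k+1}{p'}\right]\cap\left(-\tfrac{s+k+1}{p},\,-\tfrac{k+1}{p}\right],
\]
where the lower endpoints come from the hypothesis $\beta>-2$ in Lemma \ref{kerI} (the more singular term $\abs{z\zeta}^{-(k+1)}$ being binding) and the upper endpoints from the growth of $T$ near the puncture $z=0$. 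A short computation shows this intersection is nonempty exactly when $(s+k+1)p>s+2k+2$ and $(k+1)p<s+2k+2$. Reading these two inequalities according to the signs of $s+k+1$ and $k+1$ partitions the parameter range into precisely the five regimes: when both are positive ($s'>0$) one gets the interval in (1); when both are negative ($s'\in(-4,-3)$ or $s'<-4$) dividing flips both inequalities and yields the reversed intervals of (3) and (5); and on the borderline strips where $s+k+1$ or $k+1$ vanishes, or where the resulting endpoint already lies outside $(1,\infty)$, one of the constraints becomes vacuous and every $p>1$ is admissible, giving (2) and (4). This settles the ``if'' direction uniformly.

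For the necessity in (1), (3), (5) I would first record two structural facts: the two endpoints are conjugate exponents, and $\mathcal{B}_{s'}$ is self-adjoint on $L^2(\mathbb{D}^*,\abs{z}^{s'})$ (its kernel is Hermitian), so that $L^p$-boundedness is equivalent to $L^{p'}$-boundedness. Since the admissible set of $p$ is an interval by interpolation, it is enough to prove unboundedness at the two endpoints, and duality then reduces this to a single endpoint. There I would construct, as in \cite{C}, a sequence $f_N\in L^p(\mathbb{D}^*,\abs{z}^{s'})$ with uniformly bounded norm but concentrated near the singularity $z=0$, compute $\mathcal{B}_{s'}f_N$ from the monomial expansion of the kernel, and bound $\norm{\mathcal{B}_{s'}f_N}_{L^p}$ from below by a constant multiple of the sum $A_{N,1}$ of Lemma \ref{unbounded}; the harmonic-type divergence $A_{N,1}\to\infty$ at the critical exponent then forces the operator norm to be infinite.

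I expect the necessity to be the main obstacle. The sufficiency is essentially bookkeeping once Lemma \ref{kerI} is in hand, whereas the endpoint blow-up requires choosing the extremal sequence $f_N$ so that its image reproduces the precise divergent sum $A_{N,1}$ — the powers appearing in $a_j=(1/j)^j$ suggest radii shrinking to the puncture — together with careful tracking of the weighted $L^p$ norms; the passage from the two endpoints to their complement is then handled cleanly by the self-duality and interpolation argument above.
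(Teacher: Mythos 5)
Your proposal follows essentially the same route as the paper: sufficiency via Schur's test with the test function $h(z)=\abs{z}^{a}(1-\abs{z}^2)^{b}$ and Lemma \ref{kerI}, yielding exactly the paper's constraints $a p, a p'\in\big(-(s+k+1),-(k+1)\big]$ whose nonemptiness gives the five regimes, and necessity via the extremal sequence built from $a_j=(1/j)^j$ together with Lemma \ref{unbounded} at the single endpoint $p=\frac{s+2k+2}{s+k+1}$, with self-adjointness and interpolation handling the conjugate endpoint and the exterior of the interval. The only (harmless) deviations are that you run Schur on both terms of the homotopic kernel rather than dominating by the more singular one, and that you make explicit the duality/interpolation step the paper leaves implicit.
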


\begin{proof}
The proof here will be essentially the same as the proof of \cite[Theorem 1.2]{C}. So we will only give a brief outline here.\footnote{An alternative proof by using the two-weight inequality \eqref{2weight} can be found in section \ref{sec5}. See also \cite{Chen}.}

To prove the boundedness part, by \eqref{kernel}, we have
\[
\abs{B_{s'}(z,\zeta)}\le\abs{z\overline{\zeta}}^{-(k+1)}\abs{B_0(z,\zeta)},
\]
since $(z,\zeta)\in\mathbb{D}^*\times\mathbb{D}^*$. So it suffices to apply Lemma \ref{Schur} to the kernel
\[
T(z,\zeta)=\abs{z\overline{\zeta}}^{-(k+1)}\frac{1}{\abs{1-z\overline{\zeta}}^2}
\]
on $\mathbb{D}^*\times\mathbb{D}^*$ with the weight $\lambda$ and the positive function
\[
h(z)=(1-\abs{z}^2)^{\delta}\abs{z}^{\sigma}
\]
on $\mathbb{D}^*$ for some $\delta,\sigma\in\mathbb{R}$. By Lemma \ref{kerI}, we see that
\[
T(h^{p'})(z)\le Mh(z)^{p'}
\]
if $-1<\delta p'<0$, $-2<\sigma p'+s+k-1$, and $\sigma p'\le-(k+1)$. Similarly,
\[
T(h^p)(\zeta)\le Mh(\zeta)^p
\]
if $\delta\in\big(-\frac{1}{p},0\big)$ and $\sigma\in\big(-\frac{s+k+1}{p},-\frac{k+1}{p}\big]$. Therefore, such $\delta$ and $\sigma$ exist when
\begin{enumerate}
\item{$k\ge0$, $p\in\big(\frac{s+2k+2}{s+k+1},\frac{s+2k+2}{k+1}\big)$;}
\item{$k=-1$, $p\in(1,\infty)$;}
\item{$k=-2$ and $1\le s\le 2$, $p\in(1,\infty)$;}
\item{$k=-2$ and $0<s<1$, $p\in\big(2-s,\frac{2-s}{1-s}\big)$;}
\item{$k=-3$ and $s=2$, $p\in(1,\infty)$;}
\item{$k\le-3$ with (5) excluded, $p\in\big(\frac{s+2k+2}{k+1},\frac{s+2k+2}{s+k+1}\big)$.}
\end{enumerate}

To show the unboundedness part, we only need to look at $p=\frac{s+2k+2}{s+k+1}$. Let $a_j=\left(\frac{1}{j}\right)^j$, $j=1,2,3,\dots$, and define a function $g$ on $(0,1]$ such that $g(r)=r^{\frac{1}{j}-(s+k+1)}$, $r\in(a_{j+1},a_j]$. We consider the sequence
\[
f_n(z)=\left\{
\begin{array}{rc}
g(\abs{z})\left(\frac{\overline{z}}{\abs{z}}\right)^{k+1}, & \abs{z}\in(a_{n+1},1],\\
0, & \abs{z}\in[0,a_{n+1}].
\end{array}
\right.
\]
It is easy to see, $\{f_n\}\subset L^2(\mathbb{D}^*,\lambda)$. Since $p=\frac{s+2k+2}{s+k+1}>1$, by Lemma \ref{unbounded}, we have
\[
\norm{f_n}_{L^p(\mathbb{D}^*,\lambda)}^p=\frac{2}{p}A_{n,p}\le c\sum_{j=1}^{\infty}\frac{1}{j^{1+\epsilon}},
\]
for some $\epsilon>0$ and some $c>0$.

On the other hand, a direct computation shows
\[
\mathcal{B}_{s'}(f_n)(z)=sz^{-(k+1)}A_{n,1}.
\]
It is easy to see
\[
s+2k-(k+1)p=-2+\nu,
\]
for some $\nu>0$. So we obtain
\[
\norm{\mathcal{B}_{s'}(f_n)}_{L^p(\mathbb{D}^*,\lambda)}=s\left(\frac{2}{\nu}\right)^{\frac{1}{p}}A_{n,1}.
\]
Hence, by Lemma \ref{unbounded}, we see that
\[
\lim_{n\to\infty}\norm{\mathcal{B}_{s'}(f_n)}_{L^p(\mathbb{D}^*,\abs{z}^{s'})}=\infty.
\]
This completes the proof.
\end{proof}

\subsection{The Hartogs Triangle}
Now we can consider the Hartogs triangle and its generalization. We first prove Theorem \ref{LpH} and Corollary \ref{LpHs<0}.

\begin{proof}[Proof of Theorem \ref{LpH}.]
This is a direct consequence by combining Proposition \ref{inflation_theorem} and Proposition \ref{LpD*}.
\end{proof}

\begin{proof}[Proof of Corollary \ref{LpHs<0}.]
This is Theorem \ref{LpH} (2) (3) with $s'=-(p_0+4)$.
\end{proof}

If we apply Proposition \ref{inflation_theorem} several times, we will obtain the weighted version of the result in \cite{C}.

To be precise, for $j=1,\dots,l$, let $\Omega_j$ be a bounded smooth domain in $\mathbb{C}^{m_j}$ with a biholomorphic mapping $\phi_j:\Omega_j\to\mathbb{B}^{m_j}$ between $\Omega_j$ and the unit ball $\mathbb{B}^{m_j}$ in $\mathbb{C}^{m_j}$. We use the notation $\tilde{z_j}$ to denote the $j$th $m_j$-tuple in $z\in\mathbb{C}^{m_1+\cdots+m_l}$, that is $z=(\tilde{z_1},\dots,\tilde{z_l})$. Let $N=m_1+\cdots+m_l+n$, we define the $N$-dimensional Hartogs triangle by 
\[
\mathbb{H}_{\phi_j}^N=\Big\{(z,w)\in\mathbb{C}^{m_1+\cdots+m_l+n}\,:\,\max_{1\le j\le l}\abs{\phi_j(\tilde{z_j})}<\abs{w_1}<\abs{w_2}<\cdots<\abs{w_n}<1\Big\}.
\]
Let $\lambda(z,w)=\abs{w_1}^{s_1}\cdots\abs{w_n}^{s_n}$, where $(z,w)\in\mathbb{H}_{\phi_j}^N$ and $s_1,\dots,s_n\in\mathbb{R}$. We consider the weighted Bergman projection on $\mathbb{H}_{\phi_j}^N$ with the weight $\lambda$.

\begin{corollary}
\label{LpHN}
Let $\mathbb{H}_{\phi_j}^N$ and $\lambda$ be as above, and let $\mathcal{B}_{s'}$ be as in Proposition \ref{LpD*}. Then the weighted Bergman projection on $\mathbb{H}_{\phi_j}^N$ with the weight $\lambda$ is $L^p\big(\mathbb{H}_{\phi_j}^N,\lambda\big)$ bounded if and only if each of the following projections
\[
\mathcal{B}_{2(m_1+\cdots+m_l)+s_1},\mathcal{B}_{2(m_1+\cdots+m_l)+s_1+s_2+2},\cdots,\mathcal{B}_{2(m_1+\cdots+m_l)+s_1+\cdots+s_n+2(n-1)}
\]
is $L^p$ bounded on the corresponding weighted space.

In other words, assume that $p>1$ and for $j=1,2,\dots,n$ we let $I_j$ be one of the intervals for $p$ in Proposition \ref{LpD*}, so that the $j$th projection above is $L^p$ bounded if and only if $p\in I_j$. Then the weighted Bergman projection on $\mathbb{H}_{\phi_j}^N$ with the weight $\lambda$ is $L^p\big(\mathbb{H}_{\phi_j}^N,\lambda\big)$ bounded if and only if $p\in\cap I_j$.
\end{corollary}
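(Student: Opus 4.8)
The plan is to realize $\mathbb{H}_{\phi_j}^N$ as an iterated inflation and to peel off one block at a time, reducing the whole problem to a product of punctured disks (times balls) on which Lemma \ref{product_operator} and Proposition \ref{LpD*} apply. Concretely, I would work with the biholomorphism
\[
\Phi(\tilde z_1,\dots,\tilde z_l,w_1,\dots,w_n)=\Big(\tfrac{\phi_1(\tilde z_1)}{w_1},\dots,\tfrac{\phi_l(\tilde z_l)}{w_1},\tfrac{w_1}{w_2},\dots,\tfrac{w_{n-1}}{w_n},w_n\Big),
\]
which is well defined because $w_1\neq0$ on $\mathbb{H}_{\phi_j}^N$ (forced by $\max_j\abs{\phi_j(\tilde z_j)}<\abs{w_1}$), and which maps $\mathbb{H}_{\phi_j}^N$ biholomorphically onto $\mathbb{B}^{m_1}\times\cdots\times\mathbb{B}^{m_l}\times(\mathbb{D}^*)^n$. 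Here the ball factors carry $\eta_j=\phi_j(\tilde z_j)/w_1\in\mathbb{B}^{m_j}$ and the $n$ punctured disks carry $u_i=w_i/w_{i+1}$ ($i<n$) and $u_n=w_n$; note $u_1\in\mathbb{D}^*$ precisely because $w_1\neq0$. This is the one-shot version of peeling each $z$-block with $g=w_1$ and then peeling the inner disks $w_1,\dots,w_{n-1}$ (whose fibres $\{\abs{w_i}<\abs{w_{i+1}}\}$ are genuine disks) with $g=w_{i+1}$, as in Proposition \ref{inflation_theorem}.

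By Corollary \ref{Lp_induce_projection}, $L^p$ boundedness of $\mathcal{B}_{\mathbb{H}_{\phi_j}^N,\lambda}$ is equivalent to boundedness of the weighted projection on the product domain with weight $\abs{\det J_{\mathbb{C}}\Phi^{-1}}^2\,\lambda\circ\Phi^{-1}$. Using $w_j=\prod_{i\ge j}u_i$, a determinant computation gives
\[
\det J_{\mathbb{C}}\Phi^{-1}=\Big(\prod_{i=2}^{n}u_i^{\,i-1}\Big)\prod_{j=1}^{l}w_1^{\,m_j}\,\det J_{\mathbb{C}}\phi_j^{-1}(w_1\eta_j),
\]
and $\lambda\circ\Phi^{-1}=\prod_{i=1}^n\abs{u_i}^{s_1+\cdots+s_i}$. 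Collecting the power of each $\abs{u_i}$ contributed by $\prod_{i=2}^n\abs{u_i}^{2(i-1)}$, by $\prod_j\abs{w_1}^{2m_j}$ (which puts $2(m_1+\cdots+m_l)$ on every $u_i$), and by $\lambda\circ\Phi^{-1}$, the disk factors acquire the weight $\prod_{i=1}^n\abs{u_i}^{b_i}$ with
\[
b_i=2(m_1+\cdots+m_l)+(s_1+\cdots+s_i)+2(i-1),
\]
which is exactly the index of the $i$-th projection in the statement. If the transformed weight were this genuine product, Lemma \ref{product_operator} would finish the proof: the weighted Bergman projection of the product factors as $\bigotimes_j\mathcal{B}_{\mathbb{B}^{m_j}}\otimes\bigotimes_i\mathcal{B}_{\mathbb{D}^*,b_i}$, the ball factors $\mathcal{B}_{\mathbb{B}^{m_j}}$ are $L^p$ bounded for all $p\in(1,\infty)$ and impose no restriction, and the $i$-th disk factor is $\mathcal{B}_{b_i}$ on $\mathbb{D}^*$, bounded if and only if $p\in I_i$ by Proposition \ref{LpD*}; hence the product is bounded if and only if $p\in\bigcap_i I_i$.

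The hard part is that $\Phi$ does \emph{not} produce a pure product weight: the Jacobian introduces the extra factor $\abs{H}^2$ with $H(\eta,u)=\prod_{j=1}^l\det J_{\mathbb{C}}\phi_j^{-1}(w_1\eta_j)$, which is jointly holomorphic and non-vanishing but couples the fibre variables $\eta_j$ with the base variable $w_1=\prod_i u_i$. This coupling is exactly what the stated (ball-fibre, base-only-weight) form of Proposition \ref{inflation_theorem} does not absorb, and it is the one place where the non-triviality of the biholomorphisms $\phi_j$ really enters. My plan to remove it is twofold. First, since each $\phi_j$ carries a smoothly bounded domain biholomorphically onto $\mathbb{B}^{m_j}$, boundary regularity of the biholomorphism makes $\det J_{\mathbb{C}}\phi_j^{-1}$ extend continuously and without zeros to $\overline{\mathbb{B}^{m_j}}$, so that $\abs{H}\approx1$ uniformly on the product domain (note $w_1\eta_j$ ranges over all of $\mathbb{B}^{m_j}$ and may approach $\partial\mathbb{B}^{m_j}$, which is why control up to the boundary is needed). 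Second, a holomorphic weight factor bounded above and below by positive constants is harmless for $L^p$ regularity: writing $W_0=\prod_i\abs{u_i}^{b_i}$, multiplication by $H$ intertwines $\mathcal{B}_{W_0\abs{H}^2}$ and $\mathcal{B}_{W_0}$, and with $\abs{H}\approx1$ it is a bounded invertible map $L^p(W_0\abs{H}^2)\to L^p(W_0)$, so the two projections are $L^p$ bounded simultaneously. With these two points the coupled factor drops out, the weight may be replaced by the genuine product $W_0$, and the argument of the previous paragraph applies. I would package the $z$-block reduction as a stand-alone generalized inflation lemma (fibre a smoothly bounded domain biholomorphic to a ball, base-only weight, conclusion that the fibre contributes the ordinary ball projection and multiplies the base weight by $\abs{g}^{2m_j}$), proved via the map $\Phi$ above together with the $\abs{H}\approx1$ observation; the affine $w$-peels then require only Proposition \ref{inflation_theorem} as stated.
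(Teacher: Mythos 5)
Your proposal is correct and, at bottom, follows the same reduction as the paper: the single biholomorphism you write down is exactly the composition of the maps the paper uses in stages (the $l$-fold application of Proposition \ref{inflation_theorem}, the reduction from $\mathbb{H}_{\phi_j}^N$ to $\mathbb{H}^N$, and the map $\Phi$ in the proof of Lemma \ref{LpH*}); your exponents $b_i$ agree with the indices in the statement, and the endgame via Lemma \ref{product_operator}, the boundedness of $\mathcal{B}_{\mathbb{B}^{m_j}}$, and Proposition \ref{LpD*} is identical. The one substantive difference is that the paper disposes of the nontrivial $\phi_j$'s by citing the argument of \cite[Theorem 1.1]{C}, whereas you isolate the resulting coupling factor $\abs{H}^2$ with $H(\eta,u)=\prod_j\det J_{\mathbb{C}}\phi_j^{-1}(w_1\eta_j)$ and remove it by two observations: that $\abs{H}\approx1$, and that multiplication by a non-vanishing holomorphic $H$ intertwines $\mathcal{B}_{W_0\abs{H}^2}$ and $\mathcal{B}_{W_0}$ (Lemma \ref{induce_projection} applied to $M_H$). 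The intertwining step is sound precisely because $\abs{H}\approx1$ makes $M_H^{\pm1}$ bounded on every $L^p$ and makes $L^p(W_0\abs{H}^2)=L^p(W_0)$ with equivalent norms; for a general holomorphic factor this would fail, cf. \cite{Z}. The only point you should not leave implicit is the claim that $\det J_{\mathbb{C}}\phi_j^{-1}$ extends continuously and without zeros to $\overline{\mathbb{B}^{m_j}}$: this is not formal, but follows from Fefferman--Bell boundary regularity (the ball satisfies condition $R$, and $\Omega_j$ is smoothly bounded and, being biholomorphic to the ball, pseudoconvex), so a citation is needed there; this is also the ingredient hidden in the paper's appeal to \cite{C}.
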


To prove Corollary \ref{LpHN}, we need the following lemma.

\begin{lemma}
\label{LpH*}
Let $\mathbb{H}^{n*}=\{z\in\mathbb{C}^n\,:\,0<\abs{z_1}<\cdots<\abs{z_n}<1\}$ be the punctured $n$-dimensional Hartogs triangle. Suppose we have a weight $\lambda(z)=\abs{z_1}^{s_1}\cdots\abs{z_n}^{s_n}$ on $\mathbb{H}^{n*}$, where $s_1,\dots,s_n\in\mathbb{R}$. Then the weighted Bergman projection $\mathcal{B}_{\mathbb{H}^{n*},\lambda}$ is $L^p\big(\mathbb{H}^{n*},\lambda\big)$ bounded if and only if each of the following projections
\[
\mathcal{B}_{s_1},\,\,\,\mathcal{B}_{s_1+s_2+2},\,\,\,\cdots,\,\,\,\mathcal{B}_{s_1+\cdots+s_n+2(n-1)}
\]
is $L^p$ bounded on the corresponding space.
\end{lemma}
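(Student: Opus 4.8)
The plan is to straighten $\mathbb{H}^{n*}$ into a product of punctured disks by an explicit biholomorphism, to track how the weight transforms under it, and then to invoke Lemma \ref{product_operator} to split the resulting projection into a tensor product of one-variable projections. First I would introduce the map $\Phi:\mathbb{H}^{n*}\to(\mathbb{D}^*)^n$ given by
\[
\Phi(z)=\left(\frac{z_1}{z_2},\frac{z_2}{z_3},\dots,\frac{z_{n-1}}{z_n},z_n\right),
\]
whose inverse is the polynomial map with $z_j=\prod_{i=j}^n w_i$. Since every coordinate is nonzero on $\mathbb{H}^{n*}$, the map $\Phi$ is holomorphic, and the constraints $0<\abs{z_1}<\cdots<\abs{z_n}<1$ are equivalent to $w\in(\mathbb{D}^*)^n$; hence $\Phi$ is a biholomorphism.

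Next I would compute the transformed weight $\tilde\lambda=\abs{\det J_{\mathbb{C}}(\Phi^{-1})}^2\,\lambda\circ\Phi^{-1}$ appearing in Corollary \ref{Lp_induce_projection}. The Jacobian matrix $\big(\partial z_j/\partial w_k\big)$ is triangular, because $z_j$ depends only on $w_j,\dots,w_n$, so its determinant is the product of the diagonal entries $\partial z_j/\partial w_j=\prod_{i>j}w_i$. Collecting factors gives $\det J_{\mathbb{C}}(\Phi^{-1})=\prod_{i=1}^n w_i^{\,i-1}$, hence $\abs{\det J_{\mathbb{C}}(\Phi^{-1})}^2=\prod_{i=1}^n\abs{w_i}^{2(i-1)}$. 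Since $\abs{z_j}=\prod_{i=j}^n\abs{w_i}$, the pulled-back weight is $\lambda\circ\Phi^{-1}(w)=\prod_{i=1}^n\abs{w_i}^{s_1+\cdots+s_i}$, so altogether
\[
\tilde\lambda(w)=\prod_{i=1}^n\abs{w_i}^{\sigma_i},\qquad \sigma_i=s_1+\cdots+s_i+2(i-1).
\]
These exponents $\sigma_1=s_1,\ \sigma_2=s_1+s_2+2,\ \dots,\ \sigma_n=s_1+\cdots+s_n+2(n-1)$ are exactly the subscripts in the statement. By Corollary \ref{Lp_induce_projection}, $\mathcal{B}_{\mathbb{H}^{n*},\lambda}$ is $L^p(\mathbb{H}^{n*},\lambda)$ bounded if and only if $\mathcal{B}_{(\mathbb{D}^*)^n,\tilde\lambda}$ is $L^p\big((\mathbb{D}^*)^n,\tilde\lambda\big)$ bounded.

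Finally I would exploit the product structure. Because both the domain $(\mathbb{D}^*)^n$ and the weight $\tilde\lambda$ factor, the $L^2$ monomials separate variables, so the weighted Bergman space decomposes as a Hilbert-space tensor product and the kernel factors as $B_{(\mathbb{D}^*)^n,\tilde\lambda}(w,\eta)=\prod_{i=1}^n B_{\sigma_i}(w_i,\eta_i)$. Thus $\mathcal{B}_{(\mathbb{D}^*)^n,\tilde\lambda}=\mathcal{B}_{\sigma_1}\otimes\cdots\otimes\mathcal{B}_{\sigma_n}$ is precisely the product operator of Lemma \ref{product_operator}. Applying that lemma inductively by writing $(\mathbb{D}^*)^n=\mathbb{D}^*\times(\mathbb{D}^*)^{n-1}$ at each step, and using that each $\mathcal{B}_{\sigma_i}$ is a nonzero, hence non-trivial, projection (so that each grouped factor stays non-trivial for the converse direction), I obtain that the product is $L^p$ bounded if and only if every $\mathcal{B}_{\sigma_i}$ is $L^p$ bounded on $L^p\big(\mathbb{D}^*,\abs{w}^{\sigma_i}\big)$. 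This is the desired equivalence.

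The only genuinely structural point is the factorization of the weighted Bergman kernel over the product domain, which follows from separating variables in the orthonormal monomial basis; once that is in place, the iterated application of Lemma \ref{product_operator} is immediate. The one place demanding care is the bookkeeping in the weight computation, namely confirming that the Jacobian contributes $2(i-1)$ and the pullback contributes $s_1+\cdots+s_i$ to the exponent of $\abs{w_i}$, but this is a routine triangular-determinant calculation. I therefore expect no serious obstacle beyond this accounting.
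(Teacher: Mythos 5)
Your proposal is correct and follows essentially the same route as the paper: the paper's proof likewise uses the biholomorphism $\Phi(z)=\big(\tfrac{z_1}{z_2},\dots,\tfrac{z_{n-1}}{z_n},z_n\big)$ with $\abs{\det J_{\mathbb{C}}\Phi^{-1}(w)}^2=\abs{w_2w_3^2\cdots w_n^{n-1}}^2$ and then invokes the argument of Proposition \ref{inflation_theorem}, i.e.\ Corollary \ref{Lp_induce_projection} followed by Lemma \ref{product_operator}. Your write-up simply makes explicit the bookkeeping (triangular Jacobian, pullback of the weight, factorization of the kernel) that the paper leaves to the reader, and the exponents $\sigma_i=s_1+\cdots+s_i+2(i-1)$ you obtain match the statement exactly.
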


\begin{proof}
The conclusion will follow from the same argument as in the proof of Proposition \ref{inflation_theorem}, if we consider the biholomorphism $\Phi:\mathbb{H}^{n*}\to(\mathbb{D}^*)^{\times n}$ via $\Phi(z)=\big(\frac{z_1}{z_2},\cdots,\frac{z_{n-1}}{z_n},z_n\big)$ with $\abs{\det J_{\mathbb{C}}\Phi^{-1}(w)}^2=\abs{w_2w_3^2\cdots w_n^{n-1}}^2$, where $w\in(\mathbb{D}^*)^{\times n}$.
\end{proof}

Now we are ready to prove Corollary \ref{LpHN}.

\begin{proof}[Proof of Corollary \ref{LpHN}.]
If we iteratively apply Proposition \ref{inflation_theorem} $l$ times to $\Omega=\mathbb{H}^{n*}=\{w\in\mathbb{C}^n\,:\,0<\abs{w_1}<\cdots<\abs{w_n}<1\}$ with the same weight $\abs{w_1}^2$, then we will arrive at the space
\[
\mathbb{H}^N=\Big\{(z,w)\in\mathbb{C}^{m_1+\cdots+m_l+n}\,:\,\max_{1\le j\le l}\abs{\tilde{z_j}}<\abs{w_1}<\abs{w_2}<\cdots<\abs{w_n}<1\Big\}.
\]
So the weighted Bergman projection $\mathcal{B}_{\mathbb{H}^N,\lambda}$ is $L^p\big(\mathbb{H}^N,\lambda\big)$ bounded if and only if $\mathcal{B}_{\mathbb{H}^{n*}, \tilde{\lambda}}$ is $L^p\big(\mathbb{H}^{n*},\tilde{\lambda}\big)$ bounded, where $\lambda(z,w)=\abs{w_1}^{s_1}\cdots\abs{w_n}^{s_n}$ and $\tilde{\lambda}(z,w)=\abs{w_1}^{2(m_1+\cdots+m_l)}\lambda(z,w)$.

On the other hand, arguing as in the proof of \cite[Theorem 1.1]{C}, we see that the weighted Bergman projection on $\mathbb{H}_{\phi_j}^N$ with the weight $\lambda$ is $L^p\big(\mathbb{H}_{\phi_j}^N,\lambda\big)$ bounded if and only if $\mathcal{B}_{\mathbb{H}^N,\lambda}$ is $L^p\big(\mathbb{H}^N,\lambda\big)$. Applying Lemma \ref{LpH*} to $\mathcal{B}_{\mathbb{H}^{n*}, \tilde{\lambda}}$, we obtain the desired result.
\end{proof}

\section{The Two-weight Inequality}
\label{sec4}

In this section, we mainly focus on the two-weight inequality \eqref{2weight}, and prove Proposition \ref{mu1>mu2}.

\subsection{Basic Definitions}

Throughout this section, $z$ and $w$ will denote complex variables in $\mathbb{R}_+^2$. The letter $c$ will denote some positive constant independent of the variables and the functions in the context. In some equations, different constants will be absorbed into a single $c$.

\begin{definition}
In this section, $\mathcal{B}$ will denote the \emph{Bergman projection on $\mathbb{R}_+^2$}. So we have\footnote{We use the normalization $\mbox{Area}(\mathbb{D})=1$ as in the previous section.}
\[
\mathcal{B}(f)(z)=\int_{\mathbb{R}_+^2}-\frac{1}{(z-\overline{w})^2}f(w)\,dA(w)
\]
for all $f\in C_{c}^{\infty}(\mathbb{R}_+^2)$. We also consider the \emph{``absolute value" operator} $\widetilde{\mathcal{B}}$ of $\mathcal{B}$, which is defined as
\[
\widetilde{\mathcal{B}}(f)(z)=\int_{\mathbb{R}_+^2}\frac{1}{\abs{z-\overline{w}}^2}f(w)\,dA(w),
\]
where we replace the kernel $-\frac{1}{(z-\overline{w})^2}$ by its absolute value.
\end{definition}

\begin{notation}
For a weight $\mu$, a measurable function $f$, and any measurable set $W$ with its Lebesgue measure $\abs{W}$, we may use the notation
\[
\mu(W)=\int_W\mu(z)\,dA(z)
\]
and
\[
\dashint_Wf(z)\,dA(z)=\frac{1}{\abs{W}}\int_Wf(z)\,dA(z).
\]
\end{notation}

To prove Proposition \ref{mu1>mu2}, we give the definition of two variants of the $A_p$ class, which was introduced by Muckenhoup.\footnote{See \cite{Mu}, \cite{LS1}, and also \cite[Chapter 5]{S}.}

\begin{definition}
\label{Apclass}
For $p>1$, let $p'$ denote the conjugate exponent of $p$. We say the two weights $\mu_1$ and $\mu_2$ are in the $A_p(\mathbb{R}_+^2)$ class denoted by $(\mu_1,\mu_2)\in A_p(\mathbb{R}_+^2)$ if there is a positive constant $c$ so that
\[
\dashint_{D\cap\mathbb{R}_+^2}\mu_1(z)\,dA(z)\left(\dashint_{D\cap\mathbb{R}_+^2}\mu_2(z)^{-\frac{p'}{p}}\,dA(z)\right)^{\frac{p}{p'}}\le c,
\]
for all disks $D$ centered at $z\in\overline{\mathbb{R}_+^2}$. A disk is said to be a \emph{special disk} if it is centered at some $x\in\mathbb{R}$. We say $(\mu_1,\mu_2)\in A_p^+(\mathbb{R}_+^2)$ if the above inequality holds for all special disks. For some weight $\mu$, if $(\mu,\mu)\in A_p^+(\mathbb{R}_+^2)$ (resp. $A_p(\mathbb{R}_+^2)$), we may simply adopt the notation $\mu\in A_p^+(\mathbb{R}_+^2)$ (resp. $A_p(\mathbb{R}_+^2)$). 
\end{definition}

\begin{remark}
\label{Aprmk}
The class $A_p^+(\mathbb{R}_+^2)$ is strictly wider than the class $A_p(\mathbb{R}_+^2)$. For the one-weight case see the comments following the definition of $A_p^+$ in \cite{LS1}. For two-weight case see Proposition \ref{Ap+widerAp} in section \ref{sec5}.
\end{remark}

As in \cite{LS1}, we now introduce a standard ``tiling" of $\mathbb{R}_+^2$ and the associated averaging operator (the ``conditional expectation").

\begin{definition}
The \emph{standard ``tiling"} of $\mathbb{R}_+^2$ are the squares $\{S_{j,k}\}$ of form
\[
S_{j,k}=\{z=x+iy\in\mathbb{C}:\ 2^k\le y\le 2^{k+1}\,\,\,\mbox{and}\,\,\,j\cdot2^k\le x\le (j+1)\cdot2^{k+1}\}
\]
for all $j,k\in\mathbb{Z}$. Note that each $S_{j,k}$ has side-length $2^k$, the interiors of $S_{j,k}$'s are disjoint, and $\mathbb{R}_+^2=\bigcup_{j,k\in\mathbb{Z}}S_{j,k}$.

Define the associated \emph{averaging operator} $E$ by
\[
E(f)(z)=\dashint_{S_{j,k}}f(z)\,dA(z),\,\,\,\,\mbox{if}\,\,\,\,z\in S_{j,k},
\]
for any nonnegative measurable function $f$ on $\mathbb{R}_+^2$.
\end{definition}

\subsection{Some Properties of the Operator $E$}

Here we introduce some important properties of the operator $E$.

\begin{lemma}
\label{Eproperty}
We have the following basic properties of $E$. For any nonnegative measurable functions $f$ and $g$, letting $p'$ be the conjugate exponent of $p$, we have
\begin{enumerate}
\item[(a)]
{$\int_{\mathbb{R}_+^2}E(f)(z)g(z)\,dA(z)=\int_{\mathbb{R}_+^2}E(f)(z)E(g)(z)\,dA(z)$,}
\item[(b)]
{$\int_{\mathbb{R}_+^2}\left(E(f)(z)\right)^pg(z)\,dA(z)\le\int_{\mathbb{R}_+^2}E(f^p)(z)g(z)\,dA(z)$,}
\item[(c)]
{$E(fg)(z)\le\left(E(f^{p})(z)\right)^{1/p}(E(g^{p'})(z))^{1/p'}\,\,\,\,\mbox{for all}\,\,\,z\in\mathbb{R}_+^2$.}
\end{enumerate}
\end{lemma}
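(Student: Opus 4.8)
The plan is to use the fact that $E$ is nothing but the conditional expectation with respect to the $\sigma$-algebra generated by the tiling $\{S_{j,k}\}$: on each tile $E(f)$ equals the constant $\dashint_{S_{j,k}}f\,dA$. All three assertions then reduce to a single inequality on one square, either summed or taken pointwise over the tiles, and they are, respectively, the self-adjointness/idempotency of $E$, Jensen's inequality, and H\"older's inequality.

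For (a), I would split both integrals as $\sum_{j,k}\int_{S_{j,k}}$. On a fixed tile $E(f)$ is the constant $a_{j,k}:=\dashint_{S_{j,k}}f\,dA$, so $\int_{S_{j,k}}E(f)g\,dA=a_{j,k}\int_{S_{j,k}}g\,dA$. Since $E(g)$ is likewise the constant $\dashint_{S_{j,k}}g\,dA$ on that tile, we have $\int_{S_{j,k}}E(f)E(g)\,dA=a_{j,k}\big(\dashint_{S_{j,k}}g\,dA\big)\abs{S_{j,k}}$, which is the same number. Summing over $j,k$ yields (a); the only observation needed is the idempotency $\int_{S_{j,k}}g\,dA=\int_{S_{j,k}}E(g)\,dA$.

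For (b) and (c) I would argue pointwise on a tile. In (b), because $\dashint_{S_{j,k}}\,dA$ averages against the normalized (hence probability) Lebesgue measure and $x\mapsto x^p$ is convex for $p>1$, Jensen's inequality gives $\big(E(f)(z)\big)^p=\big(\dashint_{S_{j,k}}f\,dA\big)^p\le\dashint_{S_{j,k}}f^p\,dA=E(f^p)(z)$; multiplying this pointwise bound by the nonnegative $g$ and integrating preserves the inequality. In (c), H\"older's inequality on $S_{j,k}$ gives $\int_{S_{j,k}}fg\,dA\le\big(\int_{S_{j,k}}f^p\,dA\big)^{1/p}\big(\int_{S_{j,k}}g^{p'}\,dA\big)^{1/p'}$, and distributing the normalizing factor $\abs{S_{j,k}}=\abs{S_{j,k}}^{1/p}\abs{S_{j,k}}^{1/p'}$ converts the three ordinary integrals into averages, i.e.\ $E(fg)\le(E(f^p))^{1/p}(E(g^{p'}))^{1/p'}$ on each tile, hence everywhere.

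Since every step collapses to a textbook inequality on a single square, I do not expect a genuine obstacle. The only points deserving care are recognizing in (a) that $g$ and $E(g)$ have the same integral over a tile, and in (b) and (c) that $\dashint$ is a bona fide probability average so that Jensen and the normalized form of H\"older apply verbatim; the nonnegativity of $f$ and $g$ guarantees that the convexity and H\"older estimates are used in the correct direction.
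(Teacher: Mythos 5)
Your proof is correct and is exactly the ``straightforward verification'' the paper omits: since $E(f)$ is constant on each tile $S_{j,k}$, part (a) is a tile-by-tile computation, part (b) is Jensen's inequality for the probability average $\dashint_{S_{j,k}}$ applied pointwise and then integrated against $g\ge 0$, and part (c) is H\"older's inequality on each tile with the normalization $\abs{S_{j,k}}=\abs{S_{j,k}}^{1/p}\abs{S_{j,k}}^{1/p'}$. No gaps; the only (harmless) point left implicit is that the closed tiles overlap on a set of measure zero, so summing over $j,k$ recovers the integral over $\mathbb{R}_+^2$.
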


\begin{proof}
The verification is straightforward.
\end{proof}

\begin{lemma}
For any $f\ge0$, we have
\begin{equation}
\label{B<EBE}
\widetilde{\mathcal{B}}(f)\le cE\widetilde{\mathcal{B}}E(f).
\end{equation}
\end{lemma}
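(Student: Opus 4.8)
We must show that for any $f \ge 0$, the ``absolute value'' operator $\widetilde{\mathcal{B}}$ is pointwise dominated (up to a constant) by the three-fold composition $E\widetilde{\mathcal{B}}E$; that is,
\[
\widetilde{\mathcal{B}}(f)(z) \le c\, E\widetilde{\mathcal{B}}E(f)(z)
\]
for all $z \in \mathbb{R}_+^2$. Let me think about why this should be true and how to structure the argument.

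The plan is to insert the two copies of the averaging operator $E$ one at a time, first on the ``source'' side and then on the ``target'' side, reducing everything to a single geometric fact: the kernel $\frac{1}{\abs{z-\overline{w}}^2}$ is comparable, up to a universal multiplicative constant, as $(z,w)$ ranges over a fixed pair of tiles $S_{j,k}\times S_{j',k'}$. The essential point is that we are dealing with $z-\overline{w}$ rather than $z-w$: writing $z=x+iy$ and $w=u+iv$ with $y,v>0$, we have $\abs{z-\overline{w}}^2=(x-u)^2+(y+v)^2\ge(y+v)^2$, so the imaginary parts \emph{add} and the kernel is never singular in the interior of $\mathbb{R}_+^2$. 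Combined with the Whitney-type structure of the tiling (a tile at height $\approx 2^k$ has side-length $\approx 2^k$), this forces the kernel to vary by at most a bounded factor across a single tile.

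First I would prove the geometric comparability lemma: there is a $c>0$, independent of $j,k,j',k'$, such that for all $z,z'\in S_{j,k}$ and all $w\in S_{j',k'}$ one has $\abs{z-\overline{w}}^2\le c\,\abs{z'-\overline{w}}^2$, and symmetrically for the $w$-variable. For $z,z'\in S_{j,k}$ we have $\abs{x-x'}\le 2^k$ and $y,y'\in[2^k,2^{k+1}]$, so the imaginary parts satisfy $y+v\approx y'+v$ (both are bounded below by $2^k$ and their ratio lies in a fixed interval), while for the real parts $(x-u)^2\le 2(x'-u)^2+2\cdot 2^{2k}\le 2(x'-u)^2+2(y'+v)^2$ since $y'+v\ge 2^k$. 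Adding these gives $\abs{z-\overline{w}}^2\le c\,\abs{z'-\overline{w}}^2$, with $c$ universal by scale- and translation-invariance of the tiling. The same estimate in the $w$-variable follows identically.

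Next I would run the two averaging steps. For the source side, comparability in $w$ gives, for $w\in S_{j',k'}$, the bound $\frac{1}{\abs{z-\overline{w}}^2}\le c\dashint_{S_{j',k'}}\frac{dA(w')}{\abs{z-\overline{w'}}^2}$; integrating $f$ over each tile and using that $E(f)$ is constant $=\dashint_{S_{j',k'}}f$ there, the contribution of $S_{j',k'}$ to $\widetilde{\mathcal{B}}(f)(z)$ is at most $c\int_{S_{j',k'}}\frac{E(f)(w')}{\abs{z-\overline{w'}}^2}\,dA(w')$, and summing over all tiles yields $\widetilde{\mathcal{B}}(f)\le c\,\widetilde{\mathcal{B}}E(f)$. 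For the target side, comparability in $z$ gives, for $z\in S_{j,k}$, the bound $\frac{1}{\abs{z-\overline{w}}^2}\le c\dashint_{S_{j,k}}\frac{dA(z')}{\abs{z'-\overline{w}}^2}$, whence $\widetilde{\mathcal{B}}(g)(z)\le c\dashint_{S_{j,k}}\widetilde{\mathcal{B}}(g)(z')\,dA(z')=c\,E\widetilde{\mathcal{B}}(g)(z)$ for every $g\ge0$. Applying the first step to $f$ and then the second step to $g=E(f)$ and chaining the inequalities produces $\widetilde{\mathcal{B}}(f)\le c\,\widetilde{\mathcal{B}}E(f)\le c\,E\widetilde{\mathcal{B}}E(f)$, which is \eqref{B<EBE}.

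The main obstacle is the geometric comparability lemma, and in particular verifying that the constant is genuinely uniform over all tiles. Everything downstream is a routine integration; the content is entirely in the observation that the reflected kernel avoids the diagonal singularity and that the Whitney tiling is matched to the kernel's natural scaling, so that passing to tile-averages costs only a fixed factor.
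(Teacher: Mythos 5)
Your argument is correct, and it is essentially the argument the paper delegates to the references: the paper's ``proof'' of \eqref{B<EBE} is only the citation to \cite[Eq.\ (4.9)]{LS1} (and the author's thesis), where the same two-step insertion of $E$ is carried out using exactly the observation you isolate, namely that $\abs{z-\overline{w}}^2=(x-u)^2+(y+v)^2$ is comparable with a universal constant as $z$ and $w$ range over fixed Whitney tiles because the reflected kernel keeps the imaginary parts additive and hence bounded below by the tile scale. Your write-up correctly supplies the details, so there is nothing to add.
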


\begin{proof}
See \cite[Eq. (4.9)]{LS1} and also \cite[Proposition 4.2.2]{Chen}.
\end{proof}

\begin{proposition}
\label{Ap+toAp}
If $(\mu_1,\mu_2)\in A_p^+(\mathbb{R}_{+}^2)$, then $(E(\mu_1),E(\mu_2))\in A_p(\mathbb{R}_{+}^2)$.
\end{proposition}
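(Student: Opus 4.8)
The plan is to verify the $A_p(\mathbb{R}_+^2)$ condition for the pair $(E(\mu_1),E(\mu_2))$ directly from Definition \ref{Apclass}, reducing every disk $D$ centered in $\overline{\mathbb{R}_+^2}$ to a \emph{special} disk on which the hypothesis $(\mu_1,\mu_2)\in A_p^+(\mathbb{R}_+^2)$ applies. First I would dispose of the negative power on the second weight: since $t\mapsto t^{-p'/p}$ is convex on $(0,\infty)$, Jensen's inequality for the averaging operator (the same mechanism as in Lemma \ref{Eproperty}) gives the pointwise bound $\big(E(\mu_2)\big)^{-p'/p}\le E\big(\mu_2^{-p'/p}\big)$. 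Writing $\nu_1=\mu_1$ and $\nu_2=\mu_2^{-p'/p}$, it then suffices to prove
\[
\dashint_{D\cap\mathbb{R}_+^2}E(\nu_1)\,dA\cdot\left(\dashint_{D\cap\mathbb{R}_+^2}E(\nu_2)\,dA\right)^{p/p'}\le c
\]
for every disk $D$ centered in $\overline{\mathbb{R}_+^2}$, knowing from $A_p^+$ that the corresponding product of genuine averages of $\nu_1,\nu_2$ over any special disk is bounded.

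The heart of the argument is a single geometric comparison: for each $D$ there is one special disk $D^*$, valid simultaneously for $\nu_1$ and $\nu_2$, with
\[
\dashint_{D\cap\mathbb{R}_+^2}E(\nu)\,dA\le c\,\dashint_{D^*\cap\mathbb{R}_+^2}\nu\,dA
\]
for every nonnegative $\nu$. Granting this, the two factors combine and the $A_p^+$ estimate on $D^*$ closes the proof. To establish the comparison I would split according to the center $z_0=x_0+iy_0$ and radius $r$ of $D$, exploiting the Whitney-type feature of the tiling that each $S_{j,k}$ has side-length comparable to its height, hence to its distance to $\mathbb{R}$. In the regime $r\gtrsim y_0$, where $D$ reaches near the boundary, the union $\widehat D$ of all tiles meeting $D$ sits inside $D^*\cap\mathbb{R}_+^2$ for the special disk $D^*$ centered at $(x_0,0)$ of radius $\sim r$, and crucially $\abs{D^*\cap\mathbb{R}_+^2}\sim\abs{D\cap\mathbb{R}_+^2}\sim r^2$; since $E$ preserves integrals over full tiles, $\int_{D\cap\mathbb{R}_+^2}E(\nu)\le\int_{\widehat D}\nu\le\int_{D^*\cap\mathbb{R}_+^2}\nu$, and dividing by the comparable areas gives the comparison.

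In the complementary regime $r\lesssim y_0$ the disk $D$ lies inside $\mathbb{R}_+^2$, and because tiles at height $y_0$ have side $\sim y_0\gtrsim r$, it meets only $O(1)$ tiles, all of height and side comparable to $y_0$. Here $\dashint_{D}E(\nu)$ is a convex combination of the finitely many tile averages $\dashint_{S_{j,k}}\nu$, hence dominated by their maximum; each such tile is contained in a common special disk $D^*$ centered on $\mathbb{R}$ of radius $\sim y_0$ with $\abs{D^*\cap\mathbb{R}_+^2}\sim y_0^2$ comparable to the tile area, so $\dashint_{S_{j,k}}\nu\le c\,\dashint_{D^*\cap\mathbb{R}_+^2}\nu$ and the comparison follows again. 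I expect the main obstacle to be exactly this geometric bookkeeping: confirming that $\widehat D$ fits inside a special disk of \emph{comparable} area in the first regime, and that the $O(1)$ tiles of the second regime share one special disk of comparable area. Both rest on the height-$\sim$-side property of the tiling, and once they are in place the analytic content is just the Jensen reduction together with a direct application of the $A_p^+$ hypothesis, in the spirit of \cite{LS1}.
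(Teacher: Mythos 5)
Your proposal is correct and takes essentially the same route as the paper, which proves this proposition only by reference to the one-weight case in \cite[Proposition 4.6]{LS1}: that argument is precisely your combination of Jensen's inequality for the averaging operator $E$ (to replace $\big(E(\mu_2)\big)^{-p'/p}$ by $E\big(\mu_2^{-p'/p}\big)$) with the Whitney-type comparison of an arbitrary disk to a single special disk of comparable area, split according to whether the radius dominates the height of the center. You have simply supplied, correctly, the two-weight bookkeeping that the paper leaves implicit.
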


\begin{proof}
The proof of the two-weight case is essentially the same as the proof of the one-weight case in \cite[Proposition 4.6]{LS1}. See also \cite[Proposition 4.3.1]{Chen}.
\end{proof}

\subsection{The Two-weight Inequality}

Now we are ready to prove Proposition \ref{mu1>mu2}.

\begin{proof}[Proof of Proposition \ref{mu1>mu2}.]
For the sufficient part, since $(\mu_1,\mu_2)\in A_p^+(\mathbb{R}_{+}^2)$ and $c\mu_1\ge\mu_2$, it is easy to see that $\mu_1\in A_p^+(\mathbb{R}_+^2)$. By Proposition \ref{Ap+toAp}, we have $E(\mu_1)\in A_p(\mathbb{R}_+^2)$.

For $f\ge0$, as in the proof of \cite[Proposition 4.5]{LS1}, we have
\begin{equation}
\label{eq4.9}
\begin{split}
\int_{\mathbb{R}_+^2}\left(\widetilde{\mathcal{B}}(f)(z)\right)^p\mu_1(z)\,dA(z)
&\le c\int_{\mathbb{R}_+^2}\left(E\widetilde{\mathcal{B}}E(f)(z)\right)^p\mu_1(z)\,dA(z)\\
&\le c\int_{\mathbb{R}_+^2}E\left(\widetilde{\mathcal{B}}E(f)(z)\right)^p\mu_1(z)\,dA(z)\\
&=c\int_{\mathbb{R}_+^2}\left(\widetilde{\mathcal{B}}E(f)(z)\right)^pE(\mu_1)(z)\,dA(z)\\
&\le c\int_{\mathbb{R}_+^2}\left(E(f)(z)\right)^pE(\mu_1)(z)\,dA(z),
\end{split}
\end{equation}
where the first line follows from \eqref{B<EBE}, the second line follows from Lemma \ref{Eproperty} (b), the third line follows from Lemma \ref{Eproperty} (a), and the last line follows from $E(\mu_1)\in A_p(\mathbb{R}_+^2)$ and the fact that $\widetilde{\mathcal{B}}$ can be represented as a singular integral\footnote{See \cite[Section 3]{LS1} and also \cite[Proposition 4.4.1]{Chen}.}.

By Lemma \ref{Eproperty} (c), we see
\begin{equation}
\label{eq4.10}
E(f)(z)=E(f\mu_2^{\frac{1}{p}}\cdot\mu_2^{-\frac{1}{p}})(z)\le\bigg(E(f^p\mu_2)(z)\bigg)^{\frac{1}{p}}\left(E(\mu_2^{-\frac{p'}{p}})(z)\right)^{\frac{1}{p'}}.
\end{equation}

For any standard ``tiling" square $S_{j,k}$, let $x_0$ be the real part of the center of $S_{j,k}$ and let $D=D_{2^{k+2}}(x_0)$ be the special disk of radius $2^{k+2}$ centered at $x_0$. It is easy to see that $S_{j,k}\subset D\cap\mathbb{R}_+^2$. Since $\frac{1}{\abs{S_{j,k}}}=\frac{8\pi}{\abs{D\cap\mathbb{R}_+^2}}$ and $(\mu_1,\mu_2)\in A_p^+(\mathbb{R}_+^2)$, for $z\in S_{j,k}$ we see that
\begin{equation}
\label{eq4.11}
\begin{split}
&\;\;\;\;\;E(\mu_1)(z)\left(E(\mu_2^{-\frac{p'}{p}})(z)\right)^{\frac{p}{p'}}\\
&\le c\dashint_{D\cap\mathbb{R}_+^2}\mu_1(z)\,dA(z)\left(\dashint_{D\cap\mathbb{R}_+^2}\mu_2(z)^{-\frac{p'}{p}}\,dA(z)\right)^{\frac{p}{p'}}\\
&\le c
\end{split}
\end{equation}
for some $c>0$ independent of $S_{j,k}$.

Therefore combining \eqref{eq4.9}, \eqref{eq4.10} and \eqref{eq4.11}, we see that
\begin{align*}
\int_{\mathbb{R}_+^2}\left(\widetilde{\mathcal{B}}(f)(z)\right)^p\mu_1(z)\,dA(z)
&\le c\int_{\mathbb{R}_+^2}E(f^p\mu_2)(z)\left(E(\mu_2^{-\frac{p'}{p}})(z)\right)^{\frac{p}{p'}}E(\mu_1)(z)\,dA(z)\\
&\le c\sum_{j,k}\int_{S_{j,k}}E(f^p\mu_2)(z)\,dA(z)\\
&=c\int_{\mathbb{R}_+^2}f(z)^p\mu_2(z)\,dA(z).
\end{align*}

For the necessary part, following the same idea as in the proof of \cite[Proposition 3.4]{LS1} and using the fact that $c\mu_1\ge\mu_2$ for some $c>0$, we arrive at\footnote{Since $\mu_2^{-p'/p}$ is not assumed to be continuous here, the limit argument following \cite[Eq. (3.11)]{LS1} will not apply to our situation.}
\begin{equation}
\label{eq4.15}
\mu_1(D\cap\mathbb{R}_+^2)\left(\dashint_{D\cap\mathbb{R}_+^2}f(z)\,dA(z)\right)^p\le c\int_{D\cap\mathbb{R}_+^2}f(z)^p\mu_2(z)\,dA(z),
\end{equation}
for some $c>0$, provided $f\chi_{D\cap\mathbb{R}_+^2}\in L^p(\mathbb{R}_+^2,\mu_2)$.

To show that \eqref{eq4.15} is indeed the $A_p^+(\mathbb{R}_+^2)$ condition, we argue as in the proof of the necessary part of \cite[Theorem 1]{Mu}. Suppose that $\int_{D\cap\mathbb{R}_+^2}\mu_2(z)^{-\frac{p'}{p}}\,dA(z)=\infty$. Then, by duality of the space $L^p(D\cap\mathbb{R}_+^2)$, there is a $g\in L^p(D\cap\mathbb{R}_+^2)$, so that $\int_{D\cap\mathbb{R}_+^2}g(z)\mu_2(z)^{-\frac{1}{p}}\,dA(z)=\infty$. Take $f=g\mu_2^{-\frac{1}{p}}\chi_{D\cap\mathbb{R}_+^2}$ in \eqref{eq4.15}. Then $\int_{D\cap\mathbb{R}_+^2}f(z)\,dA(z)=\infty$ and $\int_{D\cap\mathbb{R}_+^2}f(z)^p\mu_2(z)\,dA(z)<\infty$. So \eqref{eq4.15} gives $\mu_1(D\cap\mathbb{R}_+^2)=0$, which contradicts to the assumption $\mu_1>0$ almost everywhere. So we see that $\int_{D\cap\mathbb{R}_+^2}\mu_2(z)^{-\frac{p'}{p}}\,dA(z)<\infty$.

Now we can take $f=(\mu_2)^{-\frac{p'}{p}}\chi_{D\cap\mathbb{R}_+^2}$ in \eqref{eq4.15}. We have
\[
\int_{D\cap\mathbb{R}_+^2}f(z)^p\mu_2(z)\,dA(z)=\int_{D\cap\mathbb{R}_+^2}\mu_2(z)^{-\frac{p'}{p}}\,dA(z)<\infty,
\]
since $\frac{p'}{p}=p'-1$. Hence we see that \eqref{eq4.15} implies the $A_p^+(\mathbb{R}_+^2)$ condition.
\end{proof}

\section{A Wider Class of Weights}
\label{sec5}

In this section, we will apply Proposition \ref{mu1>mu2} to prove Theorem \ref{LpHwider}.

\subsection{An Observation}

We first have the following observation which is mentioned in Remark \ref{Aprmk}.

\begin{proposition}
\label{Ap+widerAp}
For $z\in\mathbb{R}_+^2$, $k\in\mathbb{Z}$, $s\in(0,2]$ and $p>1$, suppose
\[
\mu_1(z)=\abs{\frac{i-z}{i+z}}^{-(k+1)p+s+2k}
\]
and
\[
\mu_2(z)=\abs{\frac{i-z}{i+z}}^{(1-s-k)p+s+2k},
\]
then $(\mu_1,\mu_2)\notin A_p(\mathbb{R}_+^2)$ for $s\neq2$. But we have $(\mu_1,\mu_2)\in A_p^+(\mathbb{R}_+^2)$ if and only if $s+2k+2>(k+1)p$ and $p(s+k+1)>s+2k+2$.
\end{proposition}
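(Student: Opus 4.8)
The plan is to reduce both weights to powers of the single function $\rho(z)=\abs{\frac{i-z}{i+z}}=\frac{\abs{z-i}}{\abs{z+i}}$, so that $\mu_1=\rho^{\alpha}$ and $\mu_2=\rho^{\beta}$ with $\alpha=-(k+1)p+s+2k$ and $\beta=(1-s-k)p+s+2k$; since $p'/p=p-1$, the quantity that enters both Muckenhoupt conditions is $\mu_2^{-p'/p}=\rho^{-\beta/(p-1)}$. From the elementary identity $1-\rho(z)^2=\frac{4y}{x^2+(y+1)^2}$ (for $z=x+iy$) I would record the facts used throughout: $0\le\rho\le1$ on $\overline{\mathbb{R}_+^2}$, with $\rho=1$ on $\mathbb{R}$ and as $\abs{z}\to\infty$; $\rho$ vanishes only at the interior point $z=i$; near $i$ one has $\abs{z+i}\approx2$, hence $\rho\approx\abs{z-i}$; and on $\{\abs{z-i}\ge\delta\}$ the bound $\rho\ge\delta/(\delta+2)$ shows $\rho\approx1$. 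Two arithmetic remarks organize the whole argument: the first stated inequality $s+2k+2>(k+1)p$ is exactly $\alpha>-2$, the second $p(s+k+1)>s+2k+2$ is exactly $\beta<2(p-1)$, i.e. $-\beta/(p-1)>-2$; and $\alpha-\beta=(s-2)p$.

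To see that $(\mu_1,\mu_2)\notin A_p$ when $s\neq2$, I would test the $A_p$ condition on disks $D_r(i)$ centered at the interior point $i$, which are admissible for $A_p$ but never for $A_p^+$. For $r<1$ these lie in $\mathbb{R}_+^2$ and $\rho\approx\abs{z-i}$ there, so using $\dashint_{D_r(i)}\abs{z-i}^{\gamma}\,dA=\frac{2}{\gamma+2}r^{\gamma}$ (valid for $\gamma>-2$, and $+\infty$ otherwise) I get $\dashint_{D_r(i)}\mu_1\approx r^{\alpha}$ and $\dashint_{D_r(i)}\mu_2^{-p'/p}\approx r^{-\beta/(p-1)}$. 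The $A_p$ product is then comparable to $r^{\alpha}(r^{-\beta/(p-1)})^{p-1}=r^{\alpha-\beta}=r^{(s-2)p}$, which blows up as $r\to0^+$ because $s<2$; if $\alpha\le-2$ or $\beta\ge2(p-1)$ one average is already infinite on these disks, so the condition fails a fortiori.

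For the necessity of the two conditions in the $A_p^+$ characterization I would run the contrapositive adapted to special disks. If $\alpha\le-2$ then $\mu_1$ fails to be locally integrable at $i$; since $i$ lies at height $1$ above $\mathbb{R}$, any special disk $D_r(0)$ with $r>1$ has $D_r(0)\cap\mathbb{R}_+^2$ containing a full neighborhood of $i$, so $\dashint_{D_r(0)\cap\mathbb{R}_+^2}\mu_1=+\infty$ and the $A_p^+$ product is infinite. The case $\beta\ge2(p-1)$ is identical with $\mu_2^{-p'/p}$ in place of $\mu_1$. Hence $A_p^+$ forces both $\alpha>-2$ and $\beta<2(p-1)$.

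The core of the proposition is the sufficiency direction, and this is where restricting to boundary-centered disks is decisive. Assuming $\alpha>-2$ and $\beta<2(p-1)$, I would fix $\delta_0=\tfrac12$ and split the special disks $D=D_r(x_0)$, $x_0\in\mathbb{R}$, into two families. If $D\cap\mathbb{R}_+^2\subset\{\abs{z-i}\ge\delta_0\}$, then $\rho\approx1$ there, so both $\mu_1$ and $\mu_2^{-p'/p}$ are bounded above by constants depending only on $\delta_0,\alpha,\beta$, and the $A_p^+$ product is $\lesssim1$ uniformly. If instead $D$ meets $\{\abs{z-i}<\delta_0\}$, then because $x_0\in\mathbb{R}$ and $i$ has height $1$, the disk must satisfy $r\gtrsim1$; using $\abs{D\cap\mathbb{R}_+^2}=\pi r^2/2$, I would split each averaged integral into the part over $\{\abs{z-i}<\delta_0\}$, bounded by the finite constant $\int_{\abs{z-i}<\delta_0}\abs{z-i}^{\gamma}\,dA$ with $\gamma\in\{\alpha,-\beta/(p-1)\}>-2$ and independent of $r$, plus the part over $\{\abs{z-i}\ge\delta_0\}$, where the integrand is $\approx1$ and the integral is $\approx r^2$. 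Dividing by $\approx r^2$ shows each average is $\lesssim1$, so the product is bounded. The main obstacle is precisely this dilution estimate for the second family: one must verify that a boundary-centered disk can only reach the interior singularity at $i$ by being large, so the singular mass is swamped by the $r^2$ area — this is the exact mechanism making $A_p^+$ strictly weaker than $A_p$, quantified by $\alpha-\beta=(s-2)p$.
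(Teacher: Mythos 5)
Your proof is correct and follows essentially the same route as the paper: testing the $A_p$ condition on small disks centered at the interior point $i$ to produce the divergent factor $r^{(s-2)p}$, and for the $A_p^+$ direction splitting special disks according to whether they reach a fixed neighborhood of $i$ (which forces radius $\gtrsim1$, so the area term dilutes the singularity), thereby reducing both averages to the local integrability of $\abs{z-i}^{\alpha}$ and $\abs{z-i}^{-\beta/(p-1)}$ at $i$, i.e.\ to the two stated inequalities. The explicit bookkeeping via $\alpha$, $\beta$, and $\alpha-\beta=(s-2)p$ is a tidy presentation of the same computation the paper carries out.
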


\begin{proof}
To show $(\mu_1,\mu_2)\notin A_p(\mathbb{R}_+^2)$, we consider any disk $D_{\epsilon}(i)$ centered at $i$ with radius $\epsilon<\frac{1}{2}$. For $z\in D_{\epsilon}(i)$, since $\abs{i-z}<\epsilon<\frac{1}{2}$, we see that $\frac{3}{2}\le\abs{i+z}\le\frac{5}{2}$. So, by Definition \ref{Apclass}, we only need to look at 
\[
\dashint_{D_{\epsilon}(i)}\abs{i-z}^{-(k+1)p+s+2k}\,dA(z)\left(\dashint_{D_{\epsilon}(i)}\abs{i-z}^{-\frac{p'}{p}[(1-s-k)p+s+2k]}\,dA(z)\right)^{p/p'}.
\]
Assuming both integrands are integrable, we obtain $\epsilon^{(s-2)p}$. But $s\in(0,2)$ and $p>1$, so we see the quantity above tends to $\infty$ as $\epsilon\to0$.

To show $(\mu_1,\mu_2)\in A_p^+(\mathbb{R}_{+}^2)$, we consider two integrals
\[
I_1=\frac{1}{\abs{D\cap\mathbb{R}_+^2}}\int_{D\cap\mathbb{R}_+^2}\abs{\frac{i-z}{i+z}}^{-(k+1)p+s+2k}\,dA(z),
\]
and
\[
I_2=\frac{1}{\abs{D\cap\mathbb{R}_+^2}}\int_{D\cap\mathbb{R}_+^2}\abs{\frac{i-z}{i+z}}^{-\frac{p'}{p}[(1-s-k)p+s+2k]}\,dA(z),
\]
where $D=D_R(x_0)$ is any special disk with radius $R$ centered at $x_0\in\mathbb{R}$. Let $D_0=D_{\frac{1}{2}}(i)$ be the disk with radius $\frac{1}{2}$ centered at $i$. We separate our arguments into two cases.

Case (I), $R<\frac{1}{2}$.

It is easy to see that $D\cap D_0=\varnothing$ hence $\abs{i-z}>\frac{1}{2}$. Note that, as $\abs{z}\to\infty$, $\abs{\frac{i-z}{i+z}}\to1$, so there is an $M$ such that, when $\abs{z}>M$, $1\ge\abs{\frac{i-z}{i+z}}\ge\frac{1}{2}$. But when $\abs{z}\le M$, $\abs{i+z}\le M+1$, so $1\ge\abs{\frac{i-z}{i+z}}\ge\frac{1}{2(M+1)}$. Therefore, the integrands in $I_1$ and $I_2$ are bounded above by some constants that are independent of the special disk $D$. Then $I_1I_2^{\frac{p}{p'}}\le c$, for some $c>0$.

Case (II), $R\ge\frac{1}{2}$.

We split both $I_1$ and $I_2$ into two integrals respectively, one integrates over $D\cap\mathbb{R}_+^2\setminus D_0$ and the other integrates over $D\cap\mathbb{R}_+^2\cap D_0$. For the same reasoning as in case (I), the parts integrated over $D\cap\mathbb{R}_+^2\setminus D_0$ is bounded. The parts integrated over $D\cap\mathbb{R}_+^2\cap D_0$ are bounded respectively by
\[
\frac{8}{\pi}\int_{D_0}\abs{\frac{i-z}{i+z}}^{-(k+1)p+s+2k}\,dA(z),
\]
and
\[
\frac{8}{\pi}\int_{D_0}\abs{\frac{i-z}{i+z}}^{-\frac{p'}{p}[(1-s-k)p+s+2k]}\,dA(z).
\]
Since $\abs{i-z}\le\frac{1}{2}$ for $z\in D_0$, we see that $\frac{3}{2}\le\abs{i+z}\le\frac{5}{2}$, so the two integrals above are bounded respectively by
\[
c\int_{D_0}\abs{i-z}^{-(k+1)p+s+2k}\,dA(z)
\]
and
\[
c\int_{D_0}\abs{i-z}^{-\frac{p'}{p}[(1-s-k)p+s+2k]}\,dA(z).
\]
The first integral above is bounded by a constant if and only if $-(k+1)p+s+2k+2>0$, and the second is bounded if and only if $-\frac{p'}{p}[(1-s-k)p+s+2k]+2>0$. Solving the two inequalities, we see that $s+2k+2>(k+1)p$ and $p(s+k+1)>s+2k+2$.
\end{proof}

\begin{remark}
Combining Proposition \ref{Ap+widerAp} and Proposition \ref{mu1>mu2}, we see that the two-weight $A_p$ condition is not a necessary condition\footnote{Compare to the general Calder\'{o}n-Zygmund type singular integral, see \cite[Chapter 5]{S}.} for the inequality \eqref{2weight}. The reason is that the Bergman kernel is not ``singular" in $\mathbb{R}_+^2$, but only on the boundary.
\end{remark}

\subsection{Alternative Proof of Proposition \ref{LpD*}.}

As an application of Proposition \ref{mu1>mu2}, we give an alternative proof of Proposition \ref{LpD*}.

\begin{proof}[Alternative proof of Proposition \ref{LpD*}.]
For $s'=s+2k$, from \eqref{kernel} we know that
\[
B_{s'}(z,\zeta)=\frac{s}{2}(z\overline{\zeta})^{-(k+1)}B_0(z,\zeta)+\left(1-\frac{s}{2}\right)(z\overline{\zeta})^{-k}B_0(z,\zeta)
\]
for $z,\zeta\in\mathbb{D}^*$. So we may write $\mathcal{B}_{s'}=\frac{s}{2}\mathcal{T}_1+\big(1-\frac{s}{2}\big)\mathcal{T}_2$, where $\mathcal{T}_1$ is the operator associated to the kernel $(z\overline{\zeta})^{-(k+1)}B_0(z,\zeta)$ with the weight $\abs{\zeta}^{s'}$ and $\mathcal{T}_2$ is the operator associated to the kernel $(z\overline{\zeta})^{-k}B_0(z,\zeta)$ with the weight $\abs{\zeta}^{s'}$.

For the operator $\mathcal{T}_1$, showing its boundedness is the same as showing
\[
\int_{\mathbb{D}^*}\abs{\int_{\mathbb{D}^*}\frac{(z\overline{\zeta})^{-(k+1)}f(\zeta)}{(1-z\overline{\zeta})^2}\abs{\zeta}^{s'}\,dA(\zeta)}^p\abs{z}^{s'}\,dV(z)\le C\int_{\mathbb{D}^*}\abs{f(z)}^p\abs{z}^{s'}\,dA(z).
\]
By the Cayley transform $\varphi:\mathbb{R}_+^2\to\mathbb{D}$, where $\varphi(z)=\frac{i-z}{i+z}$, we see that the above inequality is equivalent to
\[
\int_{\mathbb{R}_+^2-\{i\}}\abs{\int_{\mathbb{R}_+^2-\{i\}}-\frac{\tilde{f}(w)\,dV(w)}{(z-\overline{w})^2}}^p\mu_1(z)\,dA(z)\le C\int_{\mathbb{R}_+^2-\{i\}}\abs{\tilde{f}(z)}^p\mu_2(z)\,dA(z),
\]
where $\tilde{f}(z)=f(\varphi(z))\varphi(z)^{k+1}\abs{\varphi(z)}^{s-2}\cdot\frac{1}{(i+1)^2}$, $\mu_1(z)=4\abs{\frac{i-z}{i+z}}^{-(k+1)p+s+2k}\cdot\abs{\frac{1}{(i+z)^2}}^{2-p}$ and $\mu_2(z)=4\abs{\frac{i-z}{i+z}}^{(1-s-k)p+s+2k}\cdot\abs{\frac{1}{(i+z)^2}}^{2-p}$. This is exactly \eqref{2weight}, except that $\mu_1$ and $\mu_2$ are only locally integrable on $\mathbb{R}_+^2\setminus\{i\}$---not on $\mathbb{R}_+^2$. However, since $\abs{\frac{i-z}{i+z}}\le 1$ and $s\le2$, we have $\mu_1(z)/\mu_2(z)=\abs{(i-z)/(i+z)}^{(s-2)p}\ge1$, so the local integrability of $\mu_2$ will be guaranteed by the local integrability of $\mu_1$ at $i$ as we will see below. Moreover, by the relation $\mu_1\ge\mu_2$, Proposition \ref{mu1>mu2} applies.\footnote{From the proof of the necessity, we see the boundedness of $\mathcal{T}_1$ will imply the local integrability of $\mu_1$ and $\mu_2$ at $i$.} So we see that $\mathcal{T}_1$ is bounded if and only if $(\mu_1,\mu_2)\in A_p^+(\mathbb{R}_{+}^2)$.

For the condition $(\mu_1,\mu_2)\in A_p^+(\mathbb{R}_{+}^2)$, we first note that $\sigma(z)=\abs{\frac{1}{(i+z)^2}}^{2-p}\in A_p^+(\mathbb{R}_+^2)$ for all $p>1$. To see this, from the classical result $\mathcal{B}_0$ is $L^p$ bounded for all $p>1$, where $\mathcal{B}_0$ is the ordinary Bergman projection on $\mathbb{D}^*$ which is the same as the ordinary Bergman projection on the unit disk. Then \eqref{2weight} holds with both $\mu_1$ and $\mu_2$ replaced by $\sigma$, and hence $\sigma\in A_p^+(\mathbb{R}_+^2)$ for all $p>1$ by Proposition \ref{mu1>mu2}.

As in the proof of Proposition \ref{Ap+widerAp}, we consider two integrals
\[
I_1=\frac{1}{\abs{D\cap\mathbb{R}_+^2}}\int_{D\cap\mathbb{R}_+^2}4\abs{\frac{i-z}{i+z}}^{-(k+1)p+s+2k}\cdot\abs{\frac{1}{(i+z)^2}}^{2-p}\,dA(z)
\]
and
\[
I_2=\frac{1}{\abs{D\cap\mathbb{R}_+^2}}\int_{D\cap\mathbb{R}_+^2}\left(4\abs{\frac{i-z}{i+z}}^{(1-s-k)p+s+2k}\cdot\abs{\frac{1}{(i+z)^2}}^{2-p}\right)^{-\frac{p'}{p}}\,dA(z),
\]
where $D=D_R(x_0)$ is again any special disk with radius $R$ centered at $x_0\in\mathbb{R}$.

For $R<1/2$, the same argument as in the proof of Proposition \ref{Ap+widerAp} case (I), shows that the integrands in $I_1$ and $I_2$ are bounded above by $c\sigma$ and $c\sigma^{-\frac{p'}{p}}$ respectively. So this case is done by the fact that $\sigma\in A_p^+(\mathbb{R}_+^2)$ for all $p>1$.

For $R\ge1/2$, the same argument as in the proof of Proposition \ref{Ap+widerAp} case (II) shows that we only need to consider whether 
\[
\abs{i-z}^{-(k+1)p+s+2k}\,\,\,\,\,\mbox{and}\,\,\,\,\,\abs{i-z}^{-\frac{p'}{p}[(1-s-k)p+s+2k]}
\]
are locally integrable at $i$. This shows the local integrabilities of $\mu_1$ and $\mu_2$, and we see that this is true if and only if $s+2k+2>(k+1)p$ and $p(s+k+1)>s+2k+2$ by Proposition \ref{Ap+widerAp}.

Denoting by $U_1=\{p\in(1,\infty)\,:\,s+2k+2>(k+1)p\,\,\,\mbox{and}\,\,\,p(s+k+1)>s+2k+2\}$ the range for $p$, it is not difficult to see that $U_1$ is an open interval. So we obtain $(\mu_1,\mu_2)\in A_p^+(\mathbb{R}_{+}^2)$ and hence $\mathcal{T}_1$ is bounded if and only if $p\in U_1$.

Similarly, $\mathcal{T}_2$ is bounded if and only if \eqref{2weight} holds for the two weights $\mu_1(z)=4\sigma(z)\abs{\frac{i-z}{i+z}}^{-kp+s+2k}$ and $\mu_2(z)=4\sigma(z)\abs{\frac{i-z}{i+z}}^{-(s+k)p+s+2k}$. Again, $\mu_1$ and $\mu_2$ may not be locally integrable. Fortunately, we have $\mu_1(z)/\mu_2(z)=\abs{(i-z)/(i+z)}^{sp}\le1$, since $\abs{(i-z)(i+z)}\le 1$ and $s>0$. So we do not need both of $\mu_1$ and $\mu_2$ to be locally integrable, indeed, they will not be in some cases. We instead apply Proposition \ref{mu1>mu2} to a single weight, either to $\mu_1$ or to $\mu_2$. Then by $\mu_1\le\mu_2$ we get the desired inequality. So $\mathcal{T}_2$ is bounded if $\mu_1\in A_p^+(\mathbb{R}_{+}^2)\cap L_{\text{loc}}^1(\mathbb{R}_+^2)$ or $\mu_2\in A_p^+(\mathbb{R}_{+}^2)\cap L^1_{\text{loc}}(\mathbb{R}_+^2)$.

Following a similar argument, $\mu_j\in A_p^+(\mathbb{R}_{+}^2)$ will guarantee the local integrability of $\mu_j$, for $j=1,2$. Then we see, by listing all possibilities of $k\in\mathbb{Z}$, that $\mu_1\in A_p^+(\mathbb{R}_{+}^2)$ or $\mu_2\in A_p^+(\mathbb{R}_{+}^2)$ if and only if $p\in U_2=\{p\in(1,\infty)\,|\,s+2k+2>pk\,\,\,\mbox{and}\,\,\,(s+k+2)p>s+2k+2\}$ for $s\neq2$. But for $s=2$ we do not need to worry about $\mathcal{T}_2$, since $\mathcal{B}_{s'}=\frac{s}{2}\mathcal{T}_1+\big(1-\frac{s}{2}\big)\mathcal{T}_2$. It is not hard to see that $U_2$ is also an open interval, and we have $\mathcal{T}_2$ is bounded if $p\in U_2$.

Now, if both $\mathcal{T}_1$ and $\mathcal{T}_2$ are bounded, then $\mathcal{B}_{s'}$ is bounded. Since a simple argument shows that $U_1\subset U_2$ properly, we see that $\mathcal{B}_{s'}$ is bounded if $p\in U_1$. Conversely, if we look at some endpoint $p$ of $U_1$, then $p\notin U_1$ but $p\in U_2$. In this case, we see that $\mathcal{T}_1$ is unbounded, $\mathcal{T}_2$ is bounded, and hence $\mathcal{B}_{s'}$ is unbounded. So by interpolation we see that $\mathcal{B}_{s'}$ is unbounded for all $p\notin U_1$.

Therefore, for $p>1$, $\mathcal{B}_{s'}$ is bounded if and only if $p\in U_1$. When $s'\in(0,\infty)$, $U_1=\big(\frac{s+2k+2}{s+k+1},\frac{s+2k+2}{k+1}\big)$. When $s'\in[-3,0]$, $U_1=(1,\infty)$. When $s'\in(-4,-3)$, $U_1=\big(2-s,\frac{2-s}{1-s}\big)$. When $s'=-4$, $U_1=(1,\infty)$. When $s'\in(-\infty,-4)$, $U_1=\big(\frac{s+2k+2}{k+1},\frac{s+2k+2}{s+k+1}\big)$.
\end{proof}

\begin{remark}
Besides Proposition \ref{mu1>mu2}, the analysis for $\mathcal{T}_2$ to be bounded also supports our Conjecture \ref{conjecture1}, since the ``effective" range for $p$ is obtained by checking $(\mu_1,\mu_2)\in A_p^+(\mathbb{R}_+^2)$.
\end{remark}

\subsection{A Wider Class of Weights.}

Before considering the Hartogs triangle, we first look at the punctured disk $\mathbb{D}^*$ with a wider class of weights.

\begin{proposition}
\label{LpD*wider}
Assume that $p>1$. Let $\mu(z)=\abs{z}^{s'}\abs{g(z)}^2$, where $z\in\mathbb{D}^*$, $s'\in\mathbb{R}$, and $g$ is a non-vanishing holomorphic function on $\mathbb{D}$. Suppose that the weighted Bergman projection $\mathcal{B}_{\mathbb{D},\abs{g}^2}$ on $\mathbb{D}$ with the weight $\abs{g}^2$ is $L^p\big(\mathbb{D},|g|^2\big)$ bounded\footnote{The range of $p$ for $\mathcal{B}_{\mathbb{D},\abs{g}^2}$ to be $L^p$-bounded must be an open interval. See \cite{LS1,Z} and \cite[Chapter 5]{S} for a full consideration.} if and only if $p\in(p_0,p_0')$ for some $p_0\ge1$, and suppose that the weighted Bergman projection $\mathcal{B}_{s'}$ on $\mathbb{D}^*$ with the weight $\lambda(z)=\abs{z}^{s'}$, where $z\in\mathbb{D}^*$, is $L^p(\mathbb{D}^*,\lambda)$ bounded if and only if $p\in(p_1,p_1')$ for some $p_1\ge1$ as in Proposition \ref{LpD*}. Then the weighted Bergman projection $\mathcal{B}_{\mathbb{D}^*,\mu}$ on $\mathbb{D}^*$ with the weight $\mu$ is $L^p(\mathbb{D}^*,\mu)$ bounded if $p\in(p_0,p_0')\cap(p_1,p_1')$.

In addition, if $(p_1,p_1')\subset(p_0,p_0')$ properly, then $\mathcal{B}_{\mathbb{D}^*,\mu}$ is $L^p(\mathbb{D}^*,\mu)$ bounded if and only if $p\in(p_1,p_1')$.
\end{proposition}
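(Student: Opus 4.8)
The plan is to strip off the factor $\abs{g}^2$ by conjugation and reduce the whole statement to a single product-type $A_p^+(\mathbb{R}_+^2)$ condition, which is then fed into Proposition \ref{mu1>mu2}. Since $g$ is holomorphic and non-vanishing on $\mathbb{D}$, the multiplication map $M_g\colon f\mapsto gf$ is an isometry of $L^2(\mathbb{D}^*,\mu)$ onto $L^2(\mathbb{D}^*,\abs{z}^{s'})$ carrying $A^2(\mathbb{D}^*,\mu)$ onto $A^2(\mathbb{D}^*,\abs{z}^{s'})$. By Lemma \ref{induce_projection} this gives $\mathcal{B}_{\mathbb{D}^*,\mu}=M_{1/g}\circ\mathcal{B}_{s'}\circ M_g$, equivalently $B_{\mathbb{D}^*,\mu}(z,\zeta)=B_{s'}(z,\zeta)/\big(g(z)\overline{g(\zeta)}\big)$. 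Writing $h=gf$, a direct computation then shows that $\mathcal{B}_{\mathbb{D}^*,\mu}$ is $L^p(\mathbb{D}^*,\mu)$ bounded if and only if $\mathcal{B}_{s'}$ is bounded on the weighted space $L^p\big(\mathbb{D}^*,\abs{z}^{s'}\abs{g}^{2-p}\big)$, the extra factor $\abs{g}^{2-p}$ reflecting that $M_g$ is an $L^p$ isometry only for $p=2$.

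Next I would split $\mathcal{B}_{s'}=\tfrac{s}{2}\mathcal{T}_1+\big(1-\tfrac{s}{2}\big)\mathcal{T}_2$ as in \eqref{kernel} and run the Cayley transform argument from the alternative proof of Proposition \ref{LpD*} essentially verbatim, the only change being that both sides now carry the common factor $\abs{g}^{2-p}$. For $\mathcal{T}_1$ this yields the two-weight inequality \eqref{2weight} with $\mu_j=\nu_j\cdot\abs{g\circ\varphi}^{2-p}$, where $\varphi(z)=\tfrac{i-z}{i+z}$ and $\nu_1,\nu_2$ (together with $\sigma(z)=\abs{(i+z)^{-2}}^{2-p}$) are exactly the weights of that proof; moreover $\mu_1\ge\mu_2$ still holds, since the common factor cancels in $\mu_1/\mu_2=\abs{\varphi}^{(s-2)p}\ge1$, so Proposition \ref{mu1>mu2} reduces boundedness of $\mathcal{T}_1$ to $(\mu_1,\mu_2)\in A_p^+(\mathbb{R}_+^2)$ (and $\mathcal{T}_2$, when present, is handled by the single-weight version just as before). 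The two hypotheses are then recast in $A_p^+$ language: applying the same transfer to $\mathcal{B}_{\mathbb{D},\abs{g}^2}$ shows $p\in(p_0,p_0')$ is equivalent to $w_g:=\sigma\,\abs{g\circ\varphi}^{2-p}\in A_p^+(\mathbb{R}_+^2)$, while $p\in(p_1,p_1')$ is equivalent to $(\nu_1,\nu_2)\in A_p^+(\mathbb{R}_+^2)$ by the alternative proof of Proposition \ref{LpD*}.

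The heart of the matter is then the equivalence
\[
(\mu_1,\mu_2)\in A_p^+(\mathbb{R}_+^2)\quad\Longleftrightarrow\quad (\nu_1,\nu_2)\in A_p^+(\mathbb{R}_+^2)\ \text{ and }\ w_g\in A_p^+(\mathbb{R}_+^2),
\]
that is, $p\in(p_0,p_0')\cap(p_1,p_1')$, and I expect this to be the main obstacle. The key structural fact is that the singular supports of the two factors are disjoint: the powers of $\abs{\varphi}$ in $\nu_j$ are singular only at the interior point $\varphi^{-1}(0)=i$ and are comparable to a constant near $\mathbb{R}$ and near $\infty$, whereas $\abs{g\circ\varphi}^{2-p}$ is smooth and non-vanishing near $i$ (as $g(0)\ne0$) and carries all of its singular behaviour to $\mathbb{R}$, precisely where the boundary singularities of $g$ live. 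I would therefore verify the bound $\big(\dashint_{D}\mu_1\big)\big(\dashint_{D}\mu_2^{-p'/p}\big)^{p/p'}\le c$ over special disks $D=D_R(x_0)$ by the same case split ($R<\tfrac12$ versus $R\ge\tfrac12$) as in Proposition \ref{Ap+widerAp}: on the bulk of each $D$ one of the two factors is comparable to a constant and can be pulled out, so that near $\mathbb{R}$ the average is controlled by $w_g$ and near $i$ by $(\nu_1,\nu_2)$. Splitting each average into the part over a fixed neighborhood of $i$ and its complement, and recombining via H\"older, should give both directions of the equivalence; the delicate point is controlling the cross terms for large disks that simultaneously reach $i$ and a boundary singularity of $g$.

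Finally I would assemble the proposition. For the first assertion, if $p\in(p_0,p_0')\cap(p_1,p_1')$ then both $A_p^+$ conditions hold, the equivalence gives $(\mu_1,\mu_2)\in A_p^+(\mathbb{R}_+^2)$, so $\mathcal{T}_1$ and (similarly) $\mathcal{T}_2$ are bounded, hence so is $\mathcal{B}_{\mathbb{D}^*,\mu}$. For the second assertion, proper containment gives $(p_0,p_0')\cap(p_1,p_1')=(p_1,p_1')$, so boundedness holds on $(p_1,p_1')$; conversely, if $\mathcal{B}_{\mathbb{D}^*,\mu}$ is bounded at some $p$, then since $\mathcal{T}_2$ is bounded on a strictly wider range the identity $\tfrac{s}{2}\mathcal{T}_1=\mathcal{B}_{\mathbb{D}^*,\mu}-\big(1-\tfrac{s}{2}\big)\mathcal{T}_2$ forces $\mathcal{T}_1$ to be bounded, whence $(\mu_1,\mu_2)\in A_p^+(\mathbb{R}_+^2)$, whence $(\nu_1,\nu_2)\in A_p^+(\mathbb{R}_+^2)$ by the equivalence, i.e. $p\in(p_1,p_1')$; the remaining $p$ are excluded by interpolation exactly as in the alternative proof of Proposition \ref{LpD*}.
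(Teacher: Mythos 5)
Your proposal is correct and follows essentially the same route as the paper: conjugating by $g$ to get the kernel relation $B_{\mathbb{D}^*,\mu}=B_{s'}/(g\overline{g})$, splitting $\mathcal{B}_{s'}=\tfrac{s}{2}\mathcal{T}_1+(1-\tfrac{s}{2})\mathcal{T}_2$, transferring to $\mathbb{R}_+^2$ via the Cayley transform, and checking the product $A_p^+$ condition by the $R<\tfrac12$ versus $R\ge\tfrac12$ disk split, with the cross-term issue you flag resolved exactly as in the paper by the observation that $g\circ\varphi$ is bounded above and below near $i$ while the $\abs{\varphi}$ powers are comparable to constants away from $i$. The paper's argument for necessity at $p=p_1,p_1'$ likewise uses that the second pair still satisfies \eqref{2weight} there while the first does not, matching your final paragraph.
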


\begin{proof}
It is not difficult to see that the weighted Bergman kernel $B_{\mathbb{D},\abs{g}^2}(z,\zeta)$ associated to $\mathbb{D}$ with the weight $\abs{g}^2$ can be expressed as\footnote{See, for example, \cite[Theorem 3.4]{Z}.}
\[
B_{\abs{g}^2}(z,\zeta)=\frac{1}{g(z)\overline{g(\zeta)}}\frac{1}{(1-z\overline{\zeta})^2},
\]
where $(z,\zeta)\in\mathbb{D}\times\mathbb{D}$. Similar argument as in the alternative proof of Proposition \ref{LpD*} shows that $\mathcal{B}_{\mathbb{D},\abs{g}^2}$ is $L^p$-bounded if and only if \eqref{2weight} holds with both $\mu_1$ and $\mu_2$ replaced by $\sigma(z)=\abs{g(\varphi(z))\cdot\frac{1}{(i+z)^2}}^{2-p}$. By Proposition \ref{mu1>mu2}, we see that $\sigma\in A_p^+(\mathbb{R}_+^2)$ if and only if $\mathcal{B}_{\mathbb{D},\abs{g}^2}$ is $L^p$-bounded, that is, if and only if $p\in(p_0,p_0')$.

Now we turn to the weight $\mu(z)=\abs{z}^{s'}\abs{g(z)}^2$ on $\mathbb{D}^*$. The same argument above applies to the weighted Bergman projection $\mathcal{B}_{\mathbb{D}^*,\mu}$, whose associated weighted Bergman kernel is
\[
B_{\mathbb{D}^*,\mu}(z,\zeta)=\frac{1}{g(z)\overline{g(\zeta)}}B_{s'}(z,\zeta),
\]
where $B_{s'}(z,\zeta)$ is the weighted Bergman kernel associated to $\mathbb{D}^*$ with the weight $\lambda$. Then by the relation $\mathcal{B}_{s'}=\frac{s}{2}\mathcal{T}_1+\left(1-\frac{s}{2}\right)\mathcal{T}_2$ in the alternative proof of Proposition \ref{LpD*}, $\mathcal{B}_{\mathbb{D}^*,\mu}$ is $L^p(\mathbb{D}^*,\mu)$ bounded if \eqref{2weight} holds both for the first pair
\[
\mu_1(z)=4\sigma(z)\abs{\frac{i-z}{i+z}}^{-(k+1)p+s+2k},\,\,\,\,\mu_2(z)=4\sigma(z)\abs{\frac{i-z}{i+z}}^{(1-s-k)p+s+2k}
\]
and for the second pair
\[
\mu_1(z)=4\sigma(z)\abs{\frac{i-z}{i+z}}^{-kp+s+2k},\,\,\,\,\mu_2(z)=4\sigma(z)\abs{\frac{i-z}{i+z}}^{-(s+k)p+s+2k}.
\]
Following the same argument as in the alternative proof of Proposition \ref{LpD*}, and noting that $g$ is bounded above and below on the disk $D_{\frac{1}{2}}(i)$ with radius $\frac{1}{2}$ centered at $i$, we see that \eqref{2weight} holds for the first pair if and only if $p\in(p_0,p_0')\cap(p_1,p_1')$. Similarly, \eqref{2weight} holds for the second pair if $p\in(p_0,p_0')\cap U$, for some larger open interval $U$ which contains $p_1$ and $p_1'$.\footnote{If $(p_1,p_1')=(1,\infty)$, then the conclusion is trivial.} Therefore $\mathcal{B}_{\mathbb{D}^*,\mu}$ is $L^p(\mathbb{D}^*,\mu)$ bounded if $p\in(p_0,p_0')\cap(p_1,p_1')$.

If, in addition, $(p_1,p_1')\subset(p_0,p_0')$ properly then, for $p=p_1$ and $p=p_1'$, \eqref{2weight} fails for the first pair, but holds for the second pair. So $\mathcal{B}_{\mathbb{D}^*,\mu}$ is unbounded for these $p$s. Hence $\mathcal{B}_{\mathbb{D}^*,\mu}$ is bounded if and only if $p\in(p_1,p_1')$.
\end{proof}

\subsection{The Hartogs Triangle.}

Now we can consider the Hartogs triangle $\mathbb{H}$ with a wider class of weights of the form in \eqref{weight} and prove Theorem \ref{LpHwider}.

\begin{proof}[Proof of Theorem \ref{LpHwider}.]
This is a direct consequence of Proposition \ref{inflation_theorem} and Proposition \ref{LpD*wider}.
\end{proof}

\section{$L^p$ Regularity with Two Weights}
\label{sec6}

In this section, we consider the $L^p$ regularity of the weighted Bergman projection on $\mathbb{H}$ mapping from one weighted space $L^p\big(\mathbb{H},\abs{z_2}^{s'}\big)$ to the other $L^p\big(\mathbb{H},\abs{z_2}^{t}\big)$.

\subsection{The $L^p$ Boundedness.}

By applying Propositon \ref{mu1>mu2}, we can prove Theorem \ref{LpHst}.

\begin{proof}[Proof of Theorem \ref{LpHst}.]
The boundedness of the mapping is equivalent to
\[
\int_{\mathbb{H}}\abs{\mathcal{B}_{\mathbb{H},s'}(f)(z)}^p\abs{z_2}^t\,dV(z)\le C\int_{\mathbb{H}}\abs{f(z)}^p\abs{z_2}^{s'}\,dV(z).
\]
By Corollary \ref{transform_kernel}, and considering the biholomorphism $\Phi:\mathbb{H}\to\mathbb{D}\times\mathbb{D}^*$ via $\Phi(z)=\big(\frac{z_1}{z_2},z_2\big)$, we see that the above inequality is equivalent to
\begin{align*}
&\,\,\,\,\,\,\,\int_{\mathbb{D}\times\mathbb{D}^*}\abs{\int_{\mathbb{D}\times\mathbb{D}^*}B_0\otimes B_{s'}(z,\zeta)\tilde{f}(\zeta)\abs{\zeta_2}^{s'}\,dV(\zeta)}^p\abs{z_2}^{t+2-p}\,dV(z)\\
&\le C\int_{\mathbb{D}\times\mathbb{D}^*}\abs{\tilde{f}(z)}^p\abs{z_2}^{s'+2-p}\,dV(z),
\end{align*}
where $\tilde{f}(z)=f(\Phi^{-1}(z))z_2$, $z\in\mathbb{D}\times\mathbb{D}^*$, $B_{s'}$ is the weighted Bergman kernel associated to $\mathbb{D}^*$ with the weight $\abs{w}^{s'}$, $w\in\mathbb{D}^*$, and $B_0$ is the Bergman kernel associated to $\mathbb{D}$. By Lemma \ref{product_operator}, and the fact that the ordinary Bergman projection on the unit disk is $L^p$ bounded for all $p>1$, we see that the above inequality is equivalent to
\begin{equation}
\label{ineqD*}
\begin{split}
&\,\,\,\,\,\,\,\int_{\mathbb{D}^*}\abs{\int_{\mathbb{D}^*}B_{s'}(z,\zeta)\tilde{f}(\zeta)\abs{\zeta}^{s'}\,dA(\zeta)}^p\abs{z}^{t+2-p}\,dA(z)\\
&\le C\int_{\mathbb{D}^*}\abs{\tilde{f}(z)}^p\abs{z}^{s'+2-p}\,dA(z).
\end{split}
\end{equation}
By the relation $\mathcal{B}_{s'}=\frac{s}{2}\mathcal{T}_1+\left(1-\frac{s}{2}\right)\mathcal{T}_2$ in the alternative proof of Proposition \ref{LpD*}, we see that $\mathcal{T}_1$ is bounded if \eqref{2weight} holds for the first pair:
\[
\mu_1(z)=4\sigma(z)\abs{\frac{i-z}{i+z}}^{-(k+2)p+t+2},\,\,\,\,\mu_2(z)=4\sigma(z)\abs{\frac{i-z}{i+z}}^{-(s+k)p+s+2k+2};
\]
and $\mathcal{T}_2$ is bounded if \eqref{2weight} holds for the second pair (with $\mu_1$ and $\mu_2$ replaced by $\tilde{\mu}_1$ and $\tilde{\mu}_2$ respectively in \eqref{2weight}):
\[
\tilde{\mu}_1(z)=4\sigma(z)\abs{\frac{i-z}{i+z}}^{-(k+1)p+t+2},\,\,\,\,\tilde{\mu}_2(z)=4\sigma(z)\abs{\frac{i-z}{i+z}}^{-(s+k+1)p+s+2k+2},
\]
where $\sigma(z)=\abs{\frac{1}{(i+z)^2}}^{2-p}\in A_p^+(\mathbb{R}_+^2)$ for all $p>1$ as we have already seen in the alternative proof of Proposition \ref{LpD*}.

Case (I), $t\ge s'$. Then $\big(\frac{s+2k+4}{s+k+2},\frac{t+4}{k+2}\big)$ is nonempty.

For $\mathcal{T}_1$, when $\mu_1\ge\mu_2$, that is, $t-s'\le(2-s)p$, Proposition \ref{mu1>mu2} applies to the first pair $\mu_1$ and $\mu_2$. It tells us that $\mathcal{T}_1$ is bounded if and only if $p\in\big(\frac{s+2k+4}{s+k+2},\frac{t+4}{k+2}\big)$.

For $\mathcal{T}_1$, when $\mu_1<\mu_2$, that is, $t-s'>(2-s)p$, Proposition \ref{mu1>mu2} applies either to $\mu_1$ or to $\mu_2$. It tells us that $\mathcal{T}_1$ is bounded if $p\in\big(\frac{s+2k+4}{s+k+2},\frac{s+2k+4}{s+k}\big)\bigcup\big(\frac{t+4}{k+4},\frac{t+4}{k+2}\big)$.

For $\mathcal{T}_2$, when $\tilde{\mu}_1\ge\tilde{\mu}_2$, that is, $t-s'+sp\le0$, Proposition \ref{mu1>mu2} applies to the second pair $\tilde{\mu}_1$ and $\tilde{\mu_2}$. It tells us that $\mathcal{T}_2$ is bounded if and only if $p\in\big(\frac{s+2k+4}{s+k+3},\frac{t+4}{k+1}\big)$.

For $\mathcal{T}_2$, when $\tilde{\mu}_1<\tilde{\mu}_2$, that is, $t-s'+sp>0$, Proposition \ref{mu1>mu2} applies either to $\tilde{\mu}_1$ or to $\tilde{\mu}_2$. It tells us that $\mathcal{T}_2$ is bounded if $p\in\big(\frac{s+2k+4}{s+k+3},\frac{s+2k+4}{s+k+1}\big)\bigcup\big(\frac{t+4}{k+3},\frac{t+4}{k+1}\big)$.

Since $\frac{s+2k+4}{s+k+2}\in\big(\frac{s+2k+4}{s+k+3},\frac{s+2k+4}{s+k+1}\big)$ and $\frac{t+4}{k+2}\in\big(\frac{t+4}{k+3},\frac{t+4}{k+1}\big)$, if $p=\frac{s+2k+4}{s+k+2}+\epsilon$ or $p=\frac{t+4}{k+2}-\epsilon$ for any sufficiently small $\epsilon>0$, then $\mathcal{B}_{\mathbb{H},s'}$ is $L^p$-bounded. By interpolation theorem, we see that $\mathcal{B}_{\mathbb{H},s'}$ is bounded if $p\in\big(\frac{s+2k+4}{s+k+2},\frac{t+4}{k+2}\big)$.

In addition, if $t-s'\le(2-s)p$, then by looking at $p=\frac{s+2k+4}{s+k+2}$ or $p=\frac{t+4}{k+2}$, we see that $\mathcal{B}_{\mathbb{H},s'}$ is unbounded. Hence $\mathcal{B}_{\mathbb{H},s'}$ is bounded if and only if $p\in\big(\frac{s+2k+4}{s+k+2},\frac{t+4}{k+2}\big)$.

Case (II), $t<s'$. Then $t-s'\le(2-s)p$ and $L^p\big(\mathbb{H},\abs{z_2}^{t}\big)\subset L^p\big(\mathbb{H},\abs{z_2}^{s'}\big)$.

If $p\notin\big(\frac{s+2k+4}{s+k+2},\frac{s+2k+4}{k+2}\big)$, then by Theorem \ref{LpH} (1), $\mathcal{B}_{\mathbb{H},s'}$ is unbounded on $L^p\big(\mathbb{H},\abs{z_2}^{s'}\big)$. Hence $\mathcal{B}_{\mathbb{H},s'}$ is unbounded from $L^p\big(\mathbb{H},\abs{z_2}^{s'}\big)$ to $L^p\big(\mathbb{H},\abs{z_2}^{t}\big)$.

Given any $p\in\big(\frac{s+2k+4}{s+k+2},\frac{s+2k+4}{k+2}\big)$, we look at the boundary condition $\frac{t_0+4}{k+2}=\frac{s+2k+4}{s+k+2}$. For this $t_0$, we have $A^p\big(\mathbb{H},\abs{z_2}^{t_0}\big)\subsetneq A^p\big(\mathbb{H},\abs{z_2}^{s'}\big)$. Otherwise, we would have $A^p\big(\mathbb{H},\abs{z_2}^{t'}\big)=A^p\big(\mathbb{H},\abs{z_2}^{s'}\big)$ for some $t'>t_0$ such that $\frac{s+2k+4}{s+k+2}<\frac{t'+4}{k+2}<p$. Then by the same argument in Case (I) applying to $t'$, we see that $\mathcal{B}_{\mathbb{H},s'}$ is unbounded from $L^p\big(\mathbb{H},\abs{z_2}^{s'}\big)$ to $A^p\big(\mathbb{H},\abs{z_2}^{t'}\big)$. However, by Theorem \ref{LpH} (1), $\mathcal{B}_{\mathbb{H},s'}$ is bounded on $L^p\big(\mathbb{H},\abs{z_2}^{s'}\big)$ and hence maps $L^p\big(\mathbb{H},\abs{z_2}^{s'}\big)$ into $A^p\big(\mathbb{H},\abs{z_2}^{t'}\big)$. This is not possible.

So for all $t\le t_0$, that is, $\big(\frac{s+2k+4}{s+k+2},\frac{t+4}{k+2}\big)=\varnothing$, we have $A^p\big(\mathbb{H},\abs{z_2}^{t}\big)\subsetneq A^p\big(\mathbb{H},\abs{z_2}^{s'}\big)$. Since $\mathcal{B}_{\mathbb{H},s'}$ is identity on the analytic subspace $A^p\big(\mathbb{H},\abs{z_2}^{s'}\big)$, we see that $\mathcal{B}_{\mathbb{H},s'}$ is unbounded from $L^p\big(\mathbb{H},\abs{z_2}^{s'}\big)$ to $L^p\big(\mathbb{H},\abs{z_2}^{t}\big)$.

If $t>t_0$, then we see that $\big(\frac{s+2k+4}{s+k+2},\frac{t+4}{k+2}\big)$ is nonempty. By the same argument in Case (I), we see that $\mathcal{B}_{\mathbb{H},s'}$ is bounded if and only if $p\in\big(\frac{s+2k+4}{s+k+2},\frac{t+4}{k+2}\big)$.

Combining Case(I) and Case(II), we have proved the first part of the Theorem. On the other hand, if $p\le\frac{s+2k+4}{s+k+2}$. We look at the boundary condition $p=\frac{s+2k'+2}{s+k'+1}$, where $k'=k+1\ge0$. Now the same argument in the original proof of the unboundedness part of Proposition \ref{LpD*} applies to \eqref{ineqD*}. It is easy to see, for the same sequence $\{f_n\}$ with $k$ replaced by $k'$, that $f_n$ is $L^p\big(\mathbb{D}^*,\abs{z}^{s'}\big)$-bounded uniformly on $n$ and the $L^p\big(\mathbb{D}^*,\abs{z}^{t}\big)$-norm of $\mathcal{B}_{s'}(f_n)$ blows up as $n\to\infty$ for any choice of $t\in\mathbb{R}$. This completes the proof.
\end{proof}

\subsection{A Sharp Estimate.}

By applying Theorem \ref{LpHst}, it is not difficult to obtain the following sharp $L^p$ estimate of the weighted Bergman projection on the Hartogs triangle $\mathbb{H}$ with the weight $\abs{z_2}^{s'}$.

\begin{corollary}
\label{sharpestimate}
Let $\mathcal{B}_{\mathbb{H},s'}$ be the weighted Bergman projection on the Hartogs triangle $\mathbb{H}$ with the weight $\abs{z_2}^{s'}$, where $z\in\mathbb{H}$ and $s'\in\mathbb{R}$ with the unique expression $s'=s+2k$ for $k\in\mathbb{Z}$ and $s\in(0,2]$. Assume that $p>1$ and $k\ge-1$.
\begin{enumerate}
\item{Given any $p\ge\frac{s+2k+4}{k+2}$, let $t=t(\epsilon)=p(k+2)-4+\epsilon$ for any $\epsilon>0$. Then $\mathcal{B}_{\mathbb{H},s'}$ is bounded from $L^p\big(\mathbb{H},\abs{z_2}^{s'}\big)$ to $L^p\big(\mathbb{H},\abs{z_2}^{t}\big)$.}

\item{If $p\in(\frac{s+2k+4}{s+k+2},\frac{s+2k+4}{k+2})$, then $\mathcal{B}_{\mathbb{H},s'}$ is bounded from $L^p\big(\mathbb{H},\abs{z_2}^{s'}\big)$ onto $A^p\big(\mathbb{H},\abs{z_2}^{s'}\big)$.}

\item{If $p\le\frac{s+2k+4}{s+k+2}$, then $\mathcal{B}_{\mathbb{H},s'}$ is unbounded from $L^p\big(\mathbb{H},\abs{z_2}^{s'}\big)$ to $L^p\big(\mathbb{H},\abs{z_2}^{t}\big)$ for any choice of $t\in\mathbb{R}$.}

\end{enumerate}
\end{corollary}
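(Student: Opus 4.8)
The plan is to derive all three statements directly from Theorem~\ref{LpHst}, whose sufficient and necessary clauses already encode the mapping behavior of $\mathcal{B}_{\mathbb{H},s'}\colon L^p(\mathbb{H},|z_2|^{s'})\to L^p(\mathbb{H},|z_2|^t)$. The entire task reduces to translating each hypothesis on $p$ (and the prescribed $t$) into the interval condition $p\in\big(\frac{s+2k+4}{s+k+2},\frac{t+4}{k+2}\big)$, together with checking the auxiliary sign condition $t-s'\le(2-s)p$ wherever the ``if and only if'' is invoked. Throughout I would use that $k\ge-1$ and $s\in(0,2]$ force $k+2\ge1$, $s+k+2>1$, and $s+2k+4\ge s+2>0$, so all the fractions are positive and the comparisons-in-the-denominator below are legitimate.

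For part (1), I would plug $t=p(k+2)-4+\epsilon$ into the upper endpoint and compute
\[
\frac{t+4}{k+2}=\frac{p(k+2)+\epsilon}{k+2}=p+\frac{\epsilon}{k+2}>p,
\]
so $p$ lies strictly below $\frac{t+4}{k+2}$. For the lower endpoint, since $s>0$ we have $s+k+2>k+2$, hence $\frac{s+2k+4}{s+k+2}<\frac{s+2k+4}{k+2}\le p$, where the last inequality is the standing hypothesis. Thus $p\in\big(\frac{s+2k+4}{s+k+2},\frac{t+4}{k+2}\big)$, and the sufficient clause of Theorem~\ref{LpHst} yields the asserted boundedness into $L^p(\mathbb{H},|z_2|^t)$.

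For part (2), I would specialize $t=s'$ in Theorem~\ref{LpHst}. Then $\frac{t+4}{k+2}=\frac{s'+4}{k+2}=\frac{s+2k+4}{k+2}$, so the target interval is precisely the hypothesized range $\big(\frac{s+2k+4}{s+k+2},\frac{s+2k+4}{k+2}\big)$; moreover $t-s'=0\le(2-s)p$ since $s\le2$, so the ``if and only if'' clause applies and gives boundedness of $\mathcal{B}_{\mathbb{H},s'}$ on $L^p(\mathbb{H},|z_2|^{s'})$. To upgrade ``bounded'' to ``bounded onto $A^p$'', I would use that $\mathcal{B}_{\mathbb{H},s'}$ agrees with the $L^2$ orthogonal projection on a dense set and so reproduces holomorphic functions; consequently its range is contained in $A^p(\mathbb{H},|z_2|^{s'})$ and contains every element of it, so the bounded operator maps $L^p(\mathbb{H},|z_2|^{s'})$ onto $A^p(\mathbb{H},|z_2|^{s'})$. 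Part (3) needs no new work: it is verbatim the final assertion of Theorem~\ref{LpHst}, namely that $p\le\frac{s+2k+4}{s+k+2}$ forces unboundedness into $L^p(\mathbb{H},|z_2|^t)$ for every $t\in\mathbb{R}$.

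There is no genuine analytic obstacle here, since Theorem~\ref{LpHst} carries all the weight; the only points demanding care are elementary. In part (1) one must confirm the strict endpoint inequalities (that adding $\epsilon>0$ pushes $\frac{t+4}{k+2}$ above $p$, and that $s>0$ keeps the lower endpoint below $p$), and in part (2) one must justify the passage from boundedness to surjectivity via the reproducing property of the projection. The main conceptual content I would stress is the trade-off being recorded: for $p\ge\frac{s+2k+4}{k+2}$ the projection is \emph{not} bounded on $L^p(\mathbb{H},|z_2|^{s'})$ itself, yet enlarging the target exponent to any $t>p(k+2)-4$ restores boundedness into $L^p(\mathbb{H},|z_2|^{t})$, and this threshold is sharp by the necessity half of Theorem~\ref{LpHst}.
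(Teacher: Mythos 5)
Your proposal is correct and follows essentially the same route as the paper, which also derives all three parts from Theorem \ref{LpHst} and treats (2) and (3) as immediate, reducing the work to the endpoint check in (1) that $t=p(k+2)-4+\epsilon$ forces $p<\frac{t+4}{k+2}$. You are merely more explicit about verifying the lower endpoint $\frac{s+2k+4}{s+k+2}<\frac{s+2k+4}{k+2}\le p$ and about the reproducing-property argument for surjectivity in (2), both of which the paper leaves implicit.
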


\begin{proof}
It suffices to prove (1). By Theorem \ref{LpHst}, we see that $\mathcal{B}_{\mathbb{H},s'}$ is bounded from $L^p\big(\mathbb{H},\abs{z_2}^{s'}\big)$ to $L^p\big(\mathbb{H},\abs{z_2}^{t}\big)$ if $p<\frac{t+4}{k+2}$. This is trivially true if $t=p(k+2)-4+\epsilon$ for any $\epsilon>0$.
\end{proof}

\begin{remark}
In Corollary \ref{sharpestimate} (1), if we look at the boundary condition $p=\frac{s+2k+4}{k+2}$, then $t(\epsilon)=s'+\epsilon$. So this tells us, for $p=\frac{s+2k+4}{k+2}$, that $\mathcal{B}_{\mathbb{H},s'}$ can always map $L^p\big(\mathbb{H},\abs{z_2}^{s'}\big)$ into a slightly bigger space $L^p\big(\mathbb{H},\abs{z_2}^{s'+\epsilon}\big)$, but $\mathcal{B}_{\mathbb{H},s'}$ does not map $L^p\big(\mathbb{H},\abs{z_2}^{s'}\big)$ into itself. However, we can say nothing about the surjectivity of $\mathcal{B}_{\mathbb{H},s'}$ in this case.
\end{remark}

\section{Concluding Remarks}

1. We have proved the $L^p$ regularity of the weighted Bergman projection on the Hartogs triangle for a certain interval of $p$. This interval is sharp. From this, it is natural to ask: what space does the weighted Bergman projection map to when $p$ is out of this certain interval? In particular, when $p$ equals the right endpoint of this interval, Corollary \ref{sharpestimate} tells us that it should be some space between $L^p\big(\mathbb{H},\abs{z_2}^{s'}\big)$ and $L^p\big(\mathbb{H},\abs{z_2}^{s'+\epsilon}\big)$ for $\epsilon>0$ sufficiently small.

2. It is well-known that the Bergman projection on the Hartogs triangle does not preserve the Sobolev spaces. But the result for the weighted Bergman projection here provides an idea to study the $L^p$-Sobolev mapping property of the Bergman projection---put some weight on the target space. We will turn to this matter in an upcoming paper.

3. It seems likely that the model $\mathbb{D}^*$ with the weight $\lambda(z)=\abs{z}^{s'}$ provides an idea to determine the type of the product-like ``$\abs{z_1}<\abs{z_2}<\cdots$" boundary singularity. Proposition \ref{LpD*} plays a significant role in studying the $L^p$ mapping property of the weighted Bergman projection. We hope that we could make this idea more clear and describe the type of such singularity in the future.

4. Proposition \ref{mu1>mu2} provides a partial result of Conjecture \ref{conjecture1}. We suspect that this type of ``singular integral" (sometimes called ``Hilbert integral") is closely related to some ``special maximal operator"  which is centered along the boundary. We will go further in this direction in another paper. We hope that we can prove Conjecture \ref{conjecture1} and generalize it to higher dimension in the future.

\bibliographystyle{plain}

\end{document}